\newtheorem{thm}{Theorem}[section]
\newtheorem{cor}[thm]{Corollary}
\newtheorem{ex}[thm]{Example}
\newtheorem{lem}[thm]{Lemma}
\newtheorem{prop}[thm]{Proposition}
\newtheorem{as}[thm]{Assumption}
\theoremstyle{definition}
\newtheorem*{acknowledgement}{Acknowledgement}
\newtheorem{df}[thm]{Definition}
\newtheorem{rem}[thm]{Remark}
\numberwithin{equation}{section}
\begin{document}

\title[Harnack inequalities in infinite dimensions]{Harnack inequalities in infinite dimensions}
\author[Bass]{Richard F. Bass{$^{\dagger }$}}
\thanks{\footnotemark {$^\dagger$}This research was supported in part by NSF Grant DMS-0901505.}
\address{Department of Mathematics\\
University of Connecticut\\
Storrs, CT 06269, U.S.A. } \email{r.bass@uconn.edu}
\author[Gordina]{Maria Gordina{$^{*}$}}
\thanks{\footnotemark {$*$}This research was supported in part by NSF Grant DMS-1007496.}
\address{Department of Mathematics\\
University of Connecticut\\
Storrs, CT 06269, U.S.A. } \email{maria.gordina@uconn.edu}

\keywords{Harnack inequality; abstract Wiener space;
Ornstein-Uhlenbeck operator; coupling; infinite dimensional processes} \subjclass{Primary 60J45;
Secondary 58J35, 47D07.}

\date{\today }

\begin{abstract}
We consider the Harnack inequality for harmonic functions with
respect to three types of infinite-dimensional operators.
For the infinite dimensional Laplacian, we show no Harnack inequality is possible.
We also show that the Harnack inequality fails for a large class of
Ornstein-Uhlenbeck processes, although functions that are harmonic with
respect to these processes do satisfy an \emph{a priori} modulus of continuity.
Many of these processes also have a coupling property. The third type
of operator considered is the infinite dimensional analog of operators
in H\"ormander's form. In this case a Harnack inequality does hold.
\end{abstract}

\maketitle

\section{Introduction\label{s.1}}

The Harnack inequality is an important tool in analysis, partial
differential equations, and probability theory. For over half
a century there has been intense interest in extending the Harnack
inequality to more general operators than the Laplacian, with
seminal papers by Moser \cite{Moser1961a} and Krylov-Safonov
\cite{KrylovSafonov1980}. See \cite{Kassmann2007a} for a survey of
some recent work.

It is a natural question to ask whether the Harnack inequality holds
for infinite-dimensional operators. If ${L}$ is an infinite
dimensional operator and $h$ is a function that is non-negative and
harmonic in a ball  with respect to the operator ${L}$
and $B_2$ is a ball with the same center as $B_1$ but of smaller
radius, does there exist a constant $c$ depending on $B_1$ and $B_2$ but
not on $h$ such
that
$$h(x)\leqslant ch(y)$$
for all $x,y\in B_2$?

When one considers the infinite-dimensional Laplacian, or alternatively
the infinitesimal generator of infinite-dimensional Brownian motion,
there is first the question of what one means by a ball. In this case
there are two different norms present, one for a Banach space and one
for a Hilbert space. We show that no matter what combination of definitions
for $B_1$ and $B_2$ that are used, no  Harnack inequality is possible.
Our technique is to use estimates for Green functions for finite
dimensional Brownian motions and then to go from there to the infinite
dimensional Brownian motion.

For more on the potential theory of infinite-dimensional Brownian motion
we refer to the classic work of L. Gross \cite{Gross1967a}, as well as to \cite{Elson1974, Piech1972a, Piech1972b, KuoBook1975, Carmona1980, BogachevBook}. V. Goodman \cite{Goodman1972a, Goodman1973} has several interesting papers on harmonic functions for the infinite-dimensional Laplacian.

We next turn to the infinite-dimensional Ornstein-Uhlenbeck process and its infinitesimal generator. See \cite{KuoBook1975, ShigekawaBook, DaPratoZabczykBook3} for the construction and properties of these processes.
In this case, the question of the definitions of $B_1$ and $B_2$ is not an issue.

We show that again, no Harnack inequality is possible.  We again use
estimates for the Green functions of finite dimensional approximations,
but unlike in the Brownian motion case, here the estimates are quite delicate.

We also establish two positive results for a large class of infinite
dimensional Ornstein-Uhlenbeck processes. First we show that functions
that are harmonic in a ball are continuous and satisfy an \emph{a priori}
modulus of continuity.

Secondly, it is commonly thought that there is a close connection
between coupling and the Harnack inequality. See \cite{BarlowBass1999a}
for an example where this connection is explicit. By coupling,
we mean that given $B_2\subset B_1$ with the same center but different
radii and $x,y\in B_2$, it is possible to construct two Ornstein-Uhlenbeck
processes  $X$ and $Y$ started at $x,y$, resp.\
(by no means independent),
such that the two processes meet (or couple) before either process exits
$B_1$. Even though the Harnack inequality does not hold, we show that for
a large class of Ornstein-Uhlenbeck processes it is possible to
establish a coupling result.

Finally we turn to the infinite-dimensional analog of operators
in H\"or\-man\-der's form. These are operators of the form
$${L} f(x)=\sum_{j=1}^n \nabla_{A_j}^2 f(x),$$
where $\nabla_{A_j}$ is a smooth vector field. For these
operators we are able to establish a Harnack inequality.
To define a ball in this context we use a distance intimately tied
to the vector fields $A_1, \ldots, A_n$. In addition, we connect this distance to another distance introduced in \cite{BiroliMosco1991} for Dirichlet forms, and later used in connection with parabolic Harnack inequalities in different settings in \cite{SaloffCoste1995a}.

Our technique to prove the Harnack inequality for these operators in H\"{o}rmander's form is to employ methods developed by Bakry, {\'E}mery, and Ledoux. For general reviews on their approach with applications to functional inequalities see \cite{Bakry2006Tata, Ledoux2000a}. We prove a curvature--dimension inequality,
derive a Li-Yau estimate from that, and then prove a parabolic
Harnack inequality, from which the usual Harnack inequality  follows. For this approach on Riemannian manifolds with Ricci curvature bounded below we refer to \cite{BakryQian1999}.

We are not the first to investigate Harnack inequalities for infinite
dimensional operators. In addition to the papers
\cite{BiroliMosco1991} and  \cite{Berg1976} mentioned above, they have been investigated by Bendikov and
Saloff-Coste \cite{BendikovSaloffCoste2000}, who studied the
related potential theory as well. Their
context is quite different from ours, however, as they consider infinite-dimensional spaces which are close to finite-dimensional spaces, such as infinite products of tori. This allows them to modify some of the techniques used for finite
dimensional spaces.

We mention three open problems that we think are of interest:

1. Our positive result is for operators that are the infinite-dimensional
analog of H\"{o}rmander's form, but we only have a finite number of vector
fields. The corresponding processes need not live in any finite dimensional
Euclidean space, but one would still like to allow the
possibility of there being infinitely many
vector fields.

2. Are there any infinite-dimensional processes of the form Laplacian plus drift for which
a Harnack inequality holds?

3. Restricting attention to the infinite-dimensional Ornstein-Uhlenbeck process,
can one define $B_1$ and $B_2$ in terms of some alternate definition
of distance such that the Harnack inequality holds?

The outline of our paper is straightforward. Section \ref{s.3}
considers infinite-dimensional Brownian motion, Section \ref{S:OU}
contains our results on infinite Ornstein-Uhlenbeck processes, while
our Harnack inequality for operators of H\"{o}rmander form appears
in Section \ref{S:hor}.

We use the letter $c$ with or without subscripts for finite
positive constants whose exact value is unimportant and which
may change from place to place.

\begin{acknowledgement}
We are grateful to Leonard Gross and Laurent Saloff-Coste for providing us with necessary background on the subject. Our thanks also go to Bruce Driver, Tai Melcher and Sasha Teplyaev for stimulating discussions.
\end{acknowledgement}

\section{Brownian Motion}\label{s.3}

We first prove a proposition that contains the key idea.
Let $B^{(n)}(x,r)=\{y\in \mathbb{R}^n: |x-y|<r\}$, where $|x-y|=
\Big(\sum_{i-1}^n |x_i-y_i|^2\Big)^{1/2}$.

\begin{prop}\label{rb-s2-P1}
Let $K>0$. For all $n$ sufficiently large, there exists a
function $h_n$ which is non-negative and harmonic on its  domain $B^{(n)}(0,1)$
and points $x_n,z_n\in B^{(n)}(0,1/2)$ such that

\[
\frac{h_n(z_n)}{h_n(x_n)}\geqslant K.
\]
\end{prop}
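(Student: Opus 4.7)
The plan is to exhibit, for each large $n$, an explicit harmonic function on $B^{(n)}(0,1)$ whose values at two symmetric interior points differ by a factor that grows exponentially in $n$. The natural candidate is a Poisson‑kernel‑type function with a pole at a point of the unit sphere, since in $\mathbb{R}^n$ the Poisson kernel decays like (distance to pole)${}^{-n}$, and that $n$‑th power is what will produce the blow‑up.

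Concretely, I would fix $\xi_0 = e_1 \in \partial B^{(n)}(0,1)$ and set
\[
h_n(x) \;=\; \frac{1 - |x|^2}{|x - \xi_0|^n},
\]
which is, up to a dimensional constant, the Poisson kernel of the unit ball with pole at $\xi_0$. It is manifestly nonnegative on $B^{(n)}(0,1)$, and its harmonicity is standard (one checks that both $(1-|x|^2)$ and $|x-\xi_0|^{-n}$ are explicit smooth functions on the open ball with $\xi_0$ removed, and a direct computation of $\Delta h_n$ gives zero). As an even simpler alternative avoiding the Poisson‑kernel verification, for $n\geqslant 3$ one can instead take $h_n(x) = |x-\xi_0|^{2-n}$, which is harmonic everywhere except at $\xi_0 \notin B^{(n)}(0,1)$.

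Next I would choose the two points. Set $z_n = \tfrac{1}{2}(1-\delta)\,e_1$ and $x_n = -\tfrac{1}{2}(1-\delta)\,e_1$ for a small fixed $\delta>0$; both lie in the open ball $B^{(n)}(0,1/2)$, and by symmetry $|z_n|=|x_n|$, so $1 - |z_n|^2 = 1 - |x_n|^2$. Meanwhile
\[
|z_n - \xi_0| = 1 - \tfrac{1}{2}(1-\delta) \to \tfrac{1}{2},
\qquad
|x_n - \xi_0| = 1 + \tfrac{1}{2}(1-\delta) \to \tfrac{3}{2}
\]
as $\delta\downarrow 0$. Therefore
\[
\frac{h_n(z_n)}{h_n(x_n)} \;=\; \left(\frac{|x_n - \xi_0|}{|z_n - \xi_0|}\right)^{\!n}
\;\geqslant\; \bigl(3 - \varepsilon\bigr)^n
\]
for all sufficiently small $\delta$, and this exceeds any prescribed $K$ once $n$ is large enough.

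There is no real obstacle here; the only point requiring care is ensuring $x_n,z_n$ lie in the \emph{open} ball $B^{(n)}(0,1/2)$ (hence the small slack $\delta$) and recording the standard fact that $h_n$ is harmonic on $B^{(n)}(0,1)$. The content of the proposition is the quantitative observation that the Poisson kernel concentrates as $n\to\infty$, and that this concentration — an $n$‑dimensional effect — is precisely the mechanism by which the Harnack constant must blow up. This local, finite‑dimensional failure is exactly what will be needed in the next stage to rule out a Harnack inequality for infinite‑dimensional Brownian motion.
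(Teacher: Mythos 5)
Your proof is correct and essentially the same as the paper's: the paper takes $h_n(x)=|x-e_1|^{2-n}$, which is exactly your alternative candidate (your Poisson‑kernel version is a mild variant of the same idea), and both use a pole at $e_1\in\partial B^{(n)}(0,1)$ together with the dimension‑dependent decay to make the ratio grow like a fixed constant to the power $n$. The only difference is cosmetic — the paper evaluates at $x_n=0$ and $z_n=\tfrac14 e_1$, giving the ratio $(4/3)^{n-2}$, whereas you evaluate at symmetric points $\pm\tfrac12(1-\delta)e_1$.
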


\begin{proof} Let $G_n(x,y)=|x-y|^{2-n}$, a constant multiple of the
Newtonian potential density on $\mathbb{R}^n$. Let $e_1=(1,0, \ldots, 0)$.
If we set $h_n(x)=G_n(x,e_1)$, then it is well-known that $h_n$ is harmonic
in $\mathbb{R}^n\setminus\{0\}$.

Let $x_n=0$ and $z_n=\frac14 e_1$. Both are in $B^{(n)}(0,1/2)$
and
\[
\frac{h_n(z_n)}{h_n(x_n)}=\frac{(3/4)^{2-n}}{1^{2-n}}\geqslant K
\]
if $n$ is sufficiently large.
\end{proof}

Next we embed the above finite-dimensional example into the framework of infinite-dimensional
Brownian motion.

Let $(W,H,\mu)$ be an abstract Wiener space, where $W$ is a separable
Banach space, $H$ is a Hilbert space, and $\mu$ is a Gaussian measure.
For background about abstract Wiener spaces, see \cite{BogachevBook}
or \cite{KuoBook1975}.
We use $\|\cdot\|_H$ and $\|\cdot\|_W$ for the norms on $H$ and $W$,
respectively.
We denote the inner product on $H$ by $\langle \cdot, \cdot \rangle_H$.

The classical example of an abstract Wiener space has $W$ equal to the
continuous functions on $[0,1]$ that are 0 at 0 and has $H$ equal to the
functions in $W$ that are absolutely continuous and whose derivatives
are square integrable. Another example that perhaps better illustrates what follows is
to let $H$ be the set of sequences $(x_1, x_2, \ldots)$ such that
$\sum_i x_i^2<\infty$ and let $W$ be the set of sequences such that
$\sum_i \lambda_i^2 x_i^2<\infty$, where $\{\lambda_i\}$ is a fixed
sequence with $\sum_i \lambda_i^2<\infty$.

Let $ H_\ast$ be the set of $h\in H$ such that
 $\left\langle \cdot
,h\right\rangle _{H}\in H^{\ast}$ extends to a continuous linear functional on
$W.$  Here $H^*$ is the dual space of $H$, and is, of course, isomorphic to $H$.
(We will continue to denote the continuous extension of $\left\langle
\cdot,h\right\rangle _{H}$ to $W$ by $\left\langle \cdot,h\right\rangle
_{H}.)$

Next suppose that $P:H\rightarrow H$ is a finite rank orthogonal projection
such that $PH\subset H_{\ast}$. Let $\left\{  e_{j}\right\}_{j=1}^{n}$ be an
orthonormal basis for $PH$ and $\ell_{j}=\left\langle \cdot,e_{j}\right\rangle
_{H}\in W^{\ast}.$ Then we may extend $P$ to a unique continuous operator
from $W$ $\rightarrow H$ (still denoted by $P)$ by letting
\begin{equation}
Pw:=\sum_{j=1}^{n}\left\langle w, e_{j}\right\rangle _{H}e_{j}=\sum_{j=1}%
^{n}\ell_{j}\left(  w\right)  e_{j}\text{ for all }w\in W. \label{e.2.17}
\end{equation}
For more details on these projections see \cite{DriverGordina2008}.

Let $\operatorname*{Proj}\left(  W\right)  $ denote the
collection of finite rank projections on $W$ such that $PW\subset H_{\ast}$
and $P|_{H}:H\rightarrow H$ is an orthogonal projection, i.e. $P$ has the form
given in \eqref{e.2.17}. As usual a function $f:W \rightarrow\mathbb{R}$ is  a
(smooth) cylinder function if it may be written as
$f=F\circ P$ for some $P\in\operatorname*{Proj}\left(  W\right)$ and
some (smooth) function $F:\mathbb{R}^{n} \rightarrow \mathbb{R}$, where $n$ is the rank of $P$. For example, let $\{ e_{n} \}_{n=1}^{\infty}$ be an orthonormal basis of $H$ such that $e_{n} \in H_{\ast}$, and $H_n$ be the span of $\{e_1, \ldots, e_n\}$ identified with $\mathbb{R}^{n}$. For each $n$, define $P_{n}\in \operatorname*{Proj}\left(  W\right)$
by
\[
P_{n} : W \rightarrow H_{n}  \subset H_{\ast} \subset H
\]
as in \eqref{e.2.17}.

For $t \geqslant 0$ let $\mu_{t}$ be the rescaled measure $\mu_{t}\left( A \right): = \mu_{t}\left( A/\sqrt{t} \right)$ with $\mu_{0}=\delta_{0}$. Then as was first noted by  Gross in \cite[p. 135]{Gross1967a} there  exists a stochastic process $B_{t}, t \geqslant 0$, with values in $W$ which is  continuous
a.s.\ in $t$ with respect to the norm topology on $W$, has independent increments, and for $s<t$
has $B_{t}-B_{s}$ distributed as $\mu_{t-s}$, with $B_{0}=0$ a.s. $B_{t}$ is called standard Brownian motion on $\left( W, \mu \right)$.

Let $\mathcal{B}(W)$ be the Borel $\sigma$-algebra on $W$. If we set
$ \mu_{t}\left( x, A \right):=\mu_{t}\left( x- A \right)$, for $A \in \mathcal{B}\left( W \right),$
then it is well known that $\{\mu_t\}$ forms a family of Markov transition
kernels, and we may thus view $(B_t, \mathbb{P}^x)$ as a strong Markov
process with state space $W$, where $\mathbb{P}^x$ is the law of
$x+B$. We do not need this fact in what follows, but want to point out that $B_{n}\left(  t\right):=P_{n}B\left(  t\right)  \in P_{n}H\subset H\subset W$ give a natural approximation to $B\left( t \right)$ as is pointed out in  \cite[Proposition 4.6]{DriverGordina2008}.

We denote the open ball in $W$ of radius $r$ centered at $x \in W$ by $B(x,r)$ and its boundary by $S_{r}\left( x \right)$. The first exit time of $B_t$ from $B(0,r)$ will be denoted by $\tau_r$. By \cite[Remark 3.3]{Gross1967a} the exit time $\tau_ r$ is finite a.s.

A set $E$ is open in the fine topology if for each $x\in E$ there exists
a Borel set $E_x\subset E$ such that $\mathbb{P}(\sigma_{E_x}>0)=1$,
where $\sigma_{E_x}$ is the first exit from $E_x$.

Let $f$ be  a locally bounded,  Borel measurable, finely continuous, real-valued function $f$ whose domain is an open set in $W$. Then $f$ is harmonic if
\begin{equation}\label{e.4.1}
    f\left( x \right)=
\int_{S_{r}\left( 0 \right)} f\left( x+y \right) \pi_{r}\left(  dy\right)
\end{equation}
    for any $r$ such that  the closure of $B(x,r)$ is contained in the domain of $f$,
where
\[
\pi_r(dy)=\mathbb{P}^0(B_{\tau_{r}}\in \, dy).
\]

 Let $f$ be a real-valued function on $W$. We can consider $F\left( h \right)=f\left( x+h\right)$ as a function on $H$. If $F$ has the Fr\'{e}chet derivative at $0$, we say that $f$ is {$H$-differentiable}. Similarly we can define the second $H$-derivative $D^{2}$, and finally
\[
\Delta f\left( x \right):=\operatorname{tr} D^{2} f \left( x \right)
\]
whenever $D^{2} f \left( x \right)$ exists and of trace class.

The following properties can be found in \cite[Theorems 1, 2, 3]{Goodman1972a}
\begin{thm}\label{t.4.9} Let $\left( W, H, \mu \right)$ be an abstract Wiener space.\\
(1) A harmonic function on $W$ is infinitely $H$-differentiable. The second derivative of a harmonic function at each point of its domain is a Hilbert-Schmidt operator.\\
(2) If a harmonic function on $W$ satisfies a uniform Lipschitz condition in a neighborhood of a point $x$, then the Laplacian of $u$ exists at $x$ and $\left( \Delta u \right)\left( x \right)=0$.
\end{thm}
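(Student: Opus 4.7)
The plan is to combine the mean value property \eqref{e.4.1} with the Cameron--Martin theorem. The Cameron--Martin theorem says that translating Wiener measure by a vector in $H$ produces an absolutely continuous measure with a smooth (in $H$) Radon--Nikodym density, and this is the mechanism that promotes pointwise identities for $f$ into smoothness statements in the $H$-direction.

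For part (1), I would start by fixing $x$ in the domain of $f$ and $r$ small enough that $\overline{B(x,2r)}$ lies in the domain. The mean value formula \eqref{e.4.1} asserts $f(x)=\int f(x+y)\,\pi_r(dy)$. To analyze $h\mapsto f(x+h)$ for $h\in H$ small in norm, replace the exit measure from a sphere with a kernel that carries an explicit Gaussian structure. The cleanest way to do this is to use the fact that $f(B_t)$ is a martingale until time $\tau_r$ and to sample at a fixed small deterministic time $t$: optional stopping gives $f(x)=\mathbb E^{x}[f(B_{t\wedge\tau_r})]$, which we split as
\[
f(x)=\int f(x+y)\mathbf 1_{\{t<\tau_r\}}(y)\,\mu_t(dy)+\mathbb E^{x}\bigl[f(B_{\tau_r})\mathbf 1_{\{\tau_r\leqslant t\}}\bigr].
\]
For the first piece, Cameron--Martin says that shifting by $h\in H$ is absolutely continuous with respect to $\mu_t$, with density $\exp\!\bigl(\tfrac{1}{t}\langle h,\cdot\rangle_H-\tfrac{1}{2t}\|h\|_H^2\bigr)$; since this density is real-analytic in $h\in H$ and $f$ is locally bounded, differentiating under the expectation shows the shifted integral is $C^\infty$ in $h$, with derivatives expressible as integrals of $f$ against Hermite-type polynomials in the Paley--Wiener functionals $\langle h,\cdot\rangle_H$. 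For the boundary piece, an analogous Girsanov shift on path space gives the same $H$-smoothness; combining both yields infinite $H$-differentiability of $f$ at $x$.

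For the Hilbert--Schmidt property of $D^2 f(x)$, I would compute the quadratic form
\[
\langle D^2 f(x) h,h\rangle_H
=\tfrac{1}{t^2}\,\mathbb E^{x}\!\left[f(B_{t\wedge\tau_r})\bigl(\langle h,B_t-x\rangle_H^{2}-\|h\|_H^{2}\bigr)\right]+\text{boundary correction},
\]
choose an orthonormal basis $\{e_i\}\subset H_\ast$, and sum. The Hilbert--Schmidt norm is controlled by $\sup_{\overline{B(x,r)}}|f|$ times $\sum_i \mathbb E[\langle e_i,B_t\rangle_H^{2}\langle e_j,B_t\rangle_H^{2}]$-type quantities; since Brownian motion takes values in $W$ and $H_\ast\hookrightarrow W^\ast$ continuously, these sums are finite and yield the Hilbert--Schmidt bound (even though they need not be trace-class, which is precisely why $\Delta f$ is not automatic).

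For part (2), the Lipschitz hypothesis is what upgrades Hilbert--Schmidt to trace class. If $f$ is Lipschitz with constant $L$ in $W$-norm on $B(x,r_0)$, then $\nabla f$ is a bounded operator on $H$ with norm at most $L$ on this neighborhood. Applying the mean value property over small balls $B(x,\rho)$, using the symmetry of $\pi_\rho$ about $0$ to kill the linear term, and Taylor expanding $f(x+y)$ to second order yields
\[
0=\tfrac12\int\langle D^2 f(x)y,y\rangle\,\pi_\rho(dy)+\text{remainder},
\]
where the remainder is $o(\rho^{2})$ using the uniform Lipschitz bound on $\nabla f$ to control the error in Taylor's theorem. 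The quadratic integral equals $\rho^{2}$ times a trace-like expression of $D^2 f(x)$ against the covariance of $\pi_\rho/\rho$; passing to the limit $\rho\downarrow 0$ identifies this with the actual trace of $D^2 f(x)$, showing both that the trace is finite (so $D^2 f(x)$ is trace class) and that it equals zero, i.e. $\Delta f(x)=0$.

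The main technical obstacle is the boundary correction in Step~1: controlling $\mathbb E^{x}[f(B_{\tau_r})\mathbf 1_{\{\tau_r\leqslant t\}}]$ uniformly as $h$ varies, since the exit time $\tau_r$ is a complicated functional of the path and shifting the starting point by $h$ perturbs both the integrand and the stopping rule. The standard remedy is to compare the exit time from $B(x,r)$ of the $\mathbb P^{x+h}$-path with that of the $\mathbb P^{x}$-path for $\|h\|_H$ much smaller than $r$, and to absorb the resulting terms into the Cameron--Martin density on path space; carrying out this comparison carefully (as in Goodman \cite{Goodman1972a}) is the bulk of the work.
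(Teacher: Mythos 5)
The paper does not prove Theorem \ref{t.4.9} at all; it simply cites Goodman \cite[Theorems 1, 2, 3]{Goodman1972a}, so there is no ``paper's proof'' to compare against. Your plan is therefore your own reconstruction, and while the Cameron--Martin/Girsanov skeleton for part (1) is the standard and correct starting point, there is a genuine gap in the Hilbert--Schmidt step. You propose to bound the Hilbert--Schmidt norm of $D^2f(x)$ via Cauchy--Schwarz by $\sup|f|$ against quantities like $\sum_i \mathbb E\bigl[\langle e_i,B_t\rangle_H^{2}\langle e_j,B_t\rangle_H^{2}\bigr]$, claiming these sums are finite because $H_\ast\hookrightarrow W^\ast$. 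They are not: under $\mu_t$, $\langle e_i,\cdot\rangle_H$ is $N(0,t)$ for every $i$ in an $H$-orthonormal basis, so $\sum_i \mathbb E\langle e_i,B_t\rangle_H^2 = \sum_i t = \infty$, and the embedding $H_\ast\hookrightarrow W^\ast$ only gives the useless bound $\|e_i\|_{W^\ast}^2\,\mathbb E\|B_t\|_W^2$, which does not decay in $i$. Crude term-by-term Cauchy--Schwarz on the Cameron--Martin/Hermite representation would show $D^2 P_tf(x)$ is bounded, not Hilbert--Schmidt, and indeed for a generic bounded $f$ it is not Hilbert--Schmidt; the Hilbert--Schmidt conclusion uses harmonicity and the structure of the exit kernel $\pi_r$ in an essential way that your sketch does not supply.

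Two further points deserve attention. First, your decomposition of $f(x)=\mathbb E^x[f(B_{t\wedge\tau_r})]$ writes $\mathbf 1_{\{t<\tau_r\}}$ as a function of $y=B_t$ and integrates against $\mu_t(dy)$; this is not literally well-defined since $\tau_r$ depends on the whole path, so the Cameron--Martin shift must be done on path space from the start (you gesture at this, but the shifted exit time $\tau_r$ depends on $h$, which complicates the analyticity-in-$h$ claim and is where most of Goodman's work is). Second, for part (2) the passage from ``$\int\langle D^2f(x)y,y\rangle\,\pi_\rho(dy)$'' to ``$\operatorname{tr} D^2f(x)$'' after normalizing by $\rho^2$ is not a formal limit; it relies on a nontrivial identity for the second moments of the harmonic measure $\pi_\rho$ on $W$-spheres (essentially that $\int\langle h,y\rangle_H^2\,\pi_\rho(dy)$ is independent of the unit vector $h\in H$ and has a computable normalization), which is a theorem of Gross/Goodman rather than something one can take for granted. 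The overall architecture of your plan --- mean value property plus Cameron--Martin for smoothness, Lipschitz plus symmetric second-moment expansion for the vanishing Laplacian --- matches the Gross--Goodman circle of ideas, but as written it does not establish the Hilbert--Schmidt property, which is the hardest part of the statement.
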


\begin{rem}\label{rb-s2-R1}{\rm So far the theory of harmonic functions
in infinite dimensions may not seem that different from the
finite dimensional case. There are, however, striking differences.
For example, Goodman \cite[Proposition 4]{Goodman1972a} shows there exists
 a harmonic function
that is not continuous with respect to the topology of $W$. In view of the
previous theorem, however, it is smooth with respect to the topology of
$H$.
}
\end{rem}

Let $\left( W, H, \mu\right)$ be an abstract Wiener space. Denote by $G_{n}\left( x, z \right)$ the function on $\mathbb{R}^n\times \mathbb{R}^n$ defined
by $G_n(x,z)=\vert x-z \vert^{2-n}$. Consider $P_{n} \in \operatorname{Proj}\left( W \right)$ as defined by \eqref{e.2.17}, and define the  cylinder function $g_{n}\left( w \right):=G_{n}\left( P_{n}w, P_{n} z \right)$ for any $w \in W$ and $z=e_{1}$.

\begin{prop}
\label{mg-s2-P2}
The function $g_{n}$ is harmonic on $W$ away from the set $\{ w\in W: P_{n}w=e_{1}\}=\{ w\in W: e_{1}\left( w \right)=1\}$.
\end{prop}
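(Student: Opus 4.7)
The plan is to verify the mean value property \eqref{e.4.1} for $g_{n}$ by reducing to the classical harmonicity of the Newtonian kernel $G_{n}(\cdot,e_{1})$ on $\mathbb{R}^{n}\setminus\{e_{1}\}$, applied to the projected Brownian motion $P_{n}B_{t}$.

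First I would observe that $P_{n}\colon W\to H$ is bounded linear, and hence $W$-continuous, with operator norm controlled by $\max_{j}\|\ell_{j}\|_{W^{\ast}}$ as in \eqref{e.2.17}. Consequently the exceptional set $N:=\{w\in W:P_{n}w=e_{1}\}$ is closed in $W$, and $g_{n}=G_{n}(P_{n}(\cdot),e_{1})$ is continuous on the open set $W\setminus N$. In particular $g_{n}$ is locally bounded, Borel measurable, and finely continuous on $W\setminus N$, so the regularity hypotheses embedded in the definition \eqref{e.4.1} are satisfied.

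Now fix $x\in W\setminus N$. Using the operator norm of $P_{n}\colon W\to H$ one can take $r>0$ small enough that $\|P_{n}w-e_{1}\|_{H}\geqslant \tfrac12\|P_{n}x-e_{1}\|_{H}$ throughout $\overline{B(x,r)}$; this ensures $\overline{B(x,r)}\subset W\setminus N$ and bounds $g_{n}$ on the closed ball. The process $P_{n}B_{t}$ is a standard Brownian motion on the finite-dimensional space $P_{n}H\cong \mathbb{R}^{n}$ (the approximation pointed out after Proposition 4.6 of \cite{DriverGordina2008}), and $G_{n}(\cdot,e_{1})$ is classically harmonic on $\mathbb{R}^{n}\setminus\{e_{1}\}$, so applying the It\^{o} formula in $\mathbb{R}^{n}$ to the composition shows that
\[
M_{t}:=G_{n}\bigl(P_{n}x+P_{n}B_{t\wedge\tau_{r}},\,e_{1}\bigr)
\]
is a local martingale. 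The bound from the previous sentence makes $M_{t}$ bounded, and therefore a true martingale.

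Since $\tau_{r}<\infty$ almost surely by \cite[Remark 3.3]{Gross1967a}, optional stopping then yields
\[
g_{n}(x)=M_{0}=\mathbb{E}^{0}M_{\tau_{r}}=\mathbb{E}^{0}\bigl[g_{n}(x+B_{\tau_{r}})\bigr]=\int_{S_{r}(0)}g_{n}(x+y)\,\pi_{r}(dy),
\]
which is exactly \eqref{e.4.1}. The main obstacle is translating the $W$-ball condition $\overline{B(x,r)}\subset W\setminus N$ into the quantitative lower bound on $\|P_{n}w-e_{1}\|_{H}$ needed to make $M_{t}$ bounded and promote it from a local to a true martingale; this is where the continuity of $P_{n}\colon W\to H$ is essential. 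Once $M_{t}$ is bounded, the remainder of the argument is routine optional stopping.
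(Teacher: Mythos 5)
Your proof is correct and follows essentially the same route as the paper: reduce to the classical harmonicity of $G_n(\cdot,e_1)$ on $\mathbb{R}^n$ and apply optional stopping to the projected Brownian motion $P_nB_t$ at the exit time $\tau_r$. You are a bit more explicit than the paper about why the local martingale is a true martingale (bounding $g_n$ away from its pole on $\overline{B(x,r)}$) and about the regularity conditions (continuity of $P_n\colon W\to H$), whereas the paper quotes Goodman for the regularity and simply picks $r$ small enough that $e_1\notin P_nB(0,r)$; these are presentational rather than substantive differences.
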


\begin{proof}
We need to check that $g_{n}$ is locally bounded, Borel measurable, finely
continuous,  and \eqref{e.4.1} holds with $f$ replaced by $g_n$  for all $r>0$
whenever the closure of $B_{r}\left( x \right)$ is contained in the domain of $g_{n}$. One can show that $g_{n}$ is locally bounded, Borel measurable, and finely continuous similarly to \cite[p.~455]{Goodman1972a}.

Now we check the last part. Suppose $x \notin \{ w\in W: P_{n}w=e_{1}\}$.

\begin{align*}
 \int_{S_{r}\left( 0 \right)} g_{n}\left( x+y \right)\pi_{r}\left( dy\right)&=\int_{S_{r}\left( 0 \right)} G_{n}\circ P_{n}\left( x+y \right)\pi_{r}\left( dy\right)
\\
&= \mathbb{E}^x\left( G_{n}\circ P_{n}\left( B_{\tau_{r}} \right)\right)\\
&=\mathbb{E}^x\left( G_{n}\circ P_{n}\left( P_{n} B_{\tau_{r}} \right)\right).
\end{align*}
Note that $P_{n}B_{t}$ is a martingale, and $\tau_{r}$ is a stopping time, and we would like to use the optional stopping time theorem. We need to point out here that $e_{1} \in H_{\ast}\subset H$ and therefore $P_{n}e_{1}=e_{1}$. So if we choose $r<{1}/2\Vert e_{1} \Vert_{W^{\ast}}$,  then $e_{1}\notin P_{n} B(0,r)$. Indeed, if there is a $w \in B(0,r)$ such that $P_{n}w=e_{1}$, then $e_{1}\left( w \right)=\langle w, e_{1} \rangle=1$. But
\[
\vert e_{1}\left( w \right)\vert \leqslant \Vert e_{1} \Vert_{W^{\ast}} \Vert w \Vert_{W}<r\Vert e_{1} \Vert_{W^{\ast}}<\frac{1}{2}
\]
which is a contradiction. Thus $G_{n}$ is harmonic in $P_{n} B(0,r)\subseteq P_{n}H \cong \mathbb{R}^{n}$ and therefore
\[
\int_{S_{r}\left( 0 \right)} g_{n}\left( x+y \right)\pi_{r}\left( dy\right)=G_{n}\left( P_{n}x \right)=g_{n}\left( x\right).
\]

\end{proof}
Our main theorem of this section is now simple.

\begin{thm}\label{rb-s2-T1} For each $n$ there exist functions  $g_n$
that are non-negative and harmonic in the ball of radius 1 about 0 with
respect to the norm of $W$ and points $x, z$ in the ball of radius
$1/2$ about 0 with respect to the norm of $H$ such that

\[
\frac{g_n(z)}{g_n(x)}\to \infty
\]
as $n\to \infty$. In particular, the Harnack inequality fails.
\end{thm}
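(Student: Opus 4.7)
The plan is to reuse the cylinder construction of Proposition \ref{mg-s2-P2}, but with the pole placed far enough from the origin in the $W$-norm that the singular set lies outside $B_W(0,1)$. Concretely, set $M := \|e_1\|_{W^*}$, fix some $a > M$ (say $a = 2M$), and define
\[
g_n(w) := G_n(P_n w, a e_1) = |P_n w - a e_1|^{2-n}.
\]
The argument of Proposition \ref{mg-s2-P2} — using that $P_n B_t$ is a martingale, that $G_n(\cdot, a e_1)$ is harmonic on $P_n H \setminus \{a e_1\}$, and the optional stopping theorem — shows that $g_n$ is harmonic on $W \setminus \{w : P_n w = a e_1\}$. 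The choice $a = 2M$ ensures the singular set avoids $B_W(0,1)$: if $P_n w = a e_1$ then $\langle w, e_1\rangle_H = a$, so $a \leqslant \|e_1\|_{W^*}\|w\|_W = M\|w\|_W$ forces $\|w\|_W \geqslant 2$. Hence $g_n$ is non-negative and harmonic on the full $W$-ball of radius $1$.

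To finish, take $x := 0$ and $z := \alpha e_1$ for any fixed $\alpha \in (0, 1/2)$, so that $x, z \in B_H(0, 1/2)$ (and, by shrinking $\alpha$ if necessary, also in $B_W(0,1)$). A direct calculation under the identification $P_n H \cong \mathbb{R}^n$ gives $g_n(x) = a^{2-n}$ and $g_n(z) = (a-\alpha)^{2-n}$, whence
\[
\frac{g_n(z)}{g_n(x)} = \left(\frac{a}{a-\alpha}\right)^{n-2} \to \infty
\]
as $n \to \infty$, since $a/(a-\alpha) > 1$. This rules out the existence of any universal Harnack constant for the two balls.

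The main step to verify is that Proposition \ref{mg-s2-P2} transfers verbatim with pole at $a e_1$ instead of $e_1$: the only substantive check is that $a e_1 \notin P_n B(0, r)$ for the radii involved in the mean value identity, which the estimate $\|w\|_W \geqslant a/M$ above delivers with room to spare. No new analytic ingredient is required beyond the Newtonian-potential idea of Proposition \ref{rb-s2-P1}, combined with the observation that one can always push the singularity of the lifted potential just outside the prescribed $W$-ball by inflating the pole.
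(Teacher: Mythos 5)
Your proof is correct and follows the same basic strategy as the paper (lift a pole-shifted Newtonian potential $G_n(\cdot,\,\text{pole})$ through $P_n$ and compute the ratio), but you have in fact tightened a point the paper glosses over. The paper's proof simply says ``let $g_n$ be as above'' (pole at $e_1$) and cites Propositions \ref{rb-s2-P1} and \ref{mg-s2-P2}, but Proposition \ref{mg-s2-P2} only guarantees harmonicity on $W \setminus \{w : \langle w, e_1\rangle = 1\}$, and the singular hyperplane comes within $W$-distance $1/\|e_1\|_{W^*}$ of the origin. So the paper's $g_n$ is only harmonic on a $W$-ball of radius depending on $\|e_1\|_{W^*}$, which is not controlled without an implicit normalization of the $W$-norm or of $e_1$. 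Your modification — inflating the pole to $ae_1$ with $a = 2\|e_1\|_{W^*}$, which pushes the singular hyperplane to $\|w\|_W \geqslant 2$ — repairs this cleanly and makes the ``ball of radius 1 in $W$'' in the theorem statement literally correct. The price is the ratio $\left(a/(a-\alpha)\right)^{n-2}$ in place of the paper's cleaner $(4/3)^{n-2}$, but both tend to infinity, which is all that is needed. You also correctly note the auxiliary containments needed ($z \in B_H(0,1/2)$ and, shrinking $\alpha$, $z \in B_W(0,1)$). In short: same idea, executed with the explicit constant-tracking that the paper leaves to the reader.
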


\begin{proof}
We let $g_n$ be as above and $x=0$ and $z=\frac14 e_1$ for all
$n$. Our result follows by combining Propositions \ref{rb-s2-P1} and
\ref{mg-s2-P2}.
\end{proof}

\section{Ornstein-Uhlenbeck process}\label{S:OU}

Let $H$ be a separable Hilbert space with inner product $\langle \cdot, \cdot \rangle$ and corresponding norm $\vert \cdot \vert$.
Define
\[ \Vert f \Vert_{0}:=\sup_{x \in H} \vert f\left( x \right)\vert.  \]

Recall (see \cite{DaPratoZabczykBook3})
that
for an arbitrary positive trace class operator $Q$ on $H$ and  $a \in H$ there exists a unique measure $N_{a, Q}$ on $\mathcal{B}\left( H \right)$ such that
\[
\int_{H}e^{i\langle h, x \rangle}N_{a, Q}\left( dx \right)=e^{i\langle a, h\rangle-\frac{1}{2}\langle Qh, h \rangle},
\qquad h\in H.
\]
We call such $N_{a, Q}\left( dx \right)$ a Gaussian measure with mean $a$ and covariance $Q$. It is easy to check that
\[
\int_{H}  x  N_{a, Q}\left( dx \right)=a,
\]
\[\int_{H} \vert x-a \vert^{2} N_{a, Q}\left( dx \right)=\operatorname{Tr} Q,
\]
\[\int_{H} \langle x-a, y \rangle \langle x-a, z \rangle N_{a, Q}\left( dx \right)=\langle Qy, z \rangle,\qquad \mbox{\rm and}
\]
\[\frac{dN_{b, Q}}{dN_{a, Q}}\left( dy \right)=e^{-\frac{1}{2}\vert Q^{-1/2}\left( a-b \right)\vert^{2}+\langle Q^{-1/2}\left( y-a\right), Q^{-1/2}\left( b-a\right)\rangle }.
\]

We consider the Ornstein-Uhlenbeck process in a separable Hilbert space $H$. The process in question is a solution to the stochastic differential equation
\begin{equation}\label{e.8.1}
dZ_{t} = -AZ_{t} dt + Q^{1/2}\,dW_{t}, \qquad Z_{0}=x,
\end{equation}
where $A$ is the generator of a strongly continuous semigroup $e^{-At}$ on $H$, $W$ is a cylindrical Wiener process on $H$, and $Q: H \to H$ is a positive bounded operator. The solution to \eqref{e.8.1} is given by
\[
Z_{t}^{x} = e^{-At}x + \int_{0}^{t} e^{-A\left( t -s \right)}Q^{1/2}\, dW_{s}.
\]
The corresponding transition probability is defined as usual by
$\left(P_{t}f\right)\left( x \right)=\mathbb{E}f\left( Z_{t}^{x}\right)$, $f \in \mathcal{B}_{b}\left( H \right),$
where $\mathcal{B}_b(H)$ are the bounded Borel measurable functions on $H$.
It is known that  the law of $Z_{t}$ is  a Gaussian measure centered at $e^{-At}x$ with covariance
\[
Q_{t} =
\int_{0}^{t} e^{-A\left( t -s \right)}Q e^{-A^{\ast}\left( t -s \right)}\,ds,
\]
which we called $N_{e^{-tA}x, Q_{t}}\left( dy \right)$. Note that for the corresponding parabolic equation in $H$ to be well-posed we need a basic assumption on $Q_{t}$ to be non-negative and trace-class for all $t>0$ \cite[p. 99]{DaPratoZabczykBook3}.

We assume the controllability condition
\begin{equation}\label{e.8.2}
e^{-At}\left( H \right) \subset Q_{t}^{1/2}\left( H \right) \text{ for all } t>0
\end{equation}
holds.
As is described in \cite[p. 104]{DaPratoZabczykBook3}, under the condition \eqref{e.8.2} the stochastic differential equation in question has a classical solution.
We   define
\begin{equation}\label{e.5.3}
\Lambda_{t}: = Q_{t}^{-1/2}e^{-tA}, \qquad t>0,
\end{equation}
where $Q_{t}^{-1/2}$ is the pseudo-inverse of $Q_{t}^{1/2}$. By the closed
graph theorem we see that $\Lambda_{t}$ is a bounded operator in $H$ for all $t > 0$.

Suppose $Q=I$, the identity operator, and $A$ is a self-adjoint invertible operator on $H$, then
\[
Q_t =\int_0^t e^{-sA}e^{-sA^*} x ds=\int_0^t e^{-2sA} ds=\frac{1}{2} A^{-1}(I-e^{-2tA}),  t\geqslant 0.
\]
If in addition we assume that $A^{-1}$ is trace-class, then there is an orthonormal basis $\{e_n\}_{n=1}^{\infty}$ of $H$ and the corresponding eigenvalues $a_n$ such that
\[
Ae_n=a_n e_n, a_n>0, a_n\uparrow \infty, \sum_{n=1}^{\infty} a_n^{-1}<\infty.
\]
Then $Q_t$  is diagonal in the orthonormal basis $\{e_n\}_{n=1}^{\infty}$:
\[
Q_te_n=\frac{t\left( e^{2ta_n}-1 \right)}{2ta_ne^{2ta_n}}e_n.
\]
Then  $Q_t$ is trace class with
\[
\operatorname{Tr}Q_{t}=\sum_{n=1}^{\infty} \frac{t\left( e^{2ta_n}-1 \right)}{2ta_ne^{2ta_n}} \leqslant
\sum_{n=1}^{\infty} \frac{1}{2a_n}=\frac{\operatorname{Tr} A^{-1}}{2}<\infty.
\]
Now we see that
\[
\Lambda_{t}e_{n}=\frac{\sqrt{2ta_n}}{t^{1/2}\sqrt{e^{2ta_n}-1}}e_n,
\]
and so
$\vert \Lambda_{t}x \vert\leqslant \vert x \vert/\sqrt{t}$. This proves the following proposition.

\begin{prop}\label{p.3.1} Assume that $Q=I$ and $A^{-1}$ is trace-class.
Then the operator $Q_t$ is a trace-class operator on $H$ and
$\Vert \Lambda_{t} \Vert \leqslant 1/\sqrt{t}$.
\end{prop}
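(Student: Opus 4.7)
The plan is to diagonalize everything in the eigenbasis of $A$ and then just read off the bounds; the proposition is essentially a bookkeeping exercise once the spectral decomposition is in place.

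First, since $A$ is self-adjoint, invertible, and positive with $A^{-1}$ trace-class, the spectral theorem supplies an orthonormal basis $\{e_n\}$ of $H$ with $A e_n = a_n e_n$, $a_n > 0$, $a_n \uparrow \infty$, and $\sum_n a_n^{-1} < \infty$. Under $Q = I$ the formula from the preamble gives $Q_t = \tfrac{1}{2} A^{-1}(I - e^{-2tA})$, and this is a bounded operator diagonal in $\{e_n\}$ with
\[
Q_t e_n = \frac{1 - e^{-2 t a_n}}{2 a_n}\, e_n.
\]
Each eigenvalue is positive, so $Q_t^{1/2}$ has trivial kernel and the pseudo-inverse $Q_t^{-1/2}$ appearing in \eqref{e.5.3} is simply the (possibly unbounded) diagonal operator with entries $\bigl(\tfrac{1 - e^{-2t a_n}}{2 a_n}\bigr)^{-1/2}$.

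Next, I would verify the trace-class assertion by summing the eigenvalues: since $1 - e^{-2 t a_n} \leqslant 1$,
\[
\operatorname{Tr} Q_t = \sum_{n=1}^\infty \frac{1 - e^{-2 t a_n}}{2 a_n} \leqslant \frac{1}{2} \sum_{n=1}^\infty \frac{1}{a_n} = \frac{1}{2}\operatorname{Tr} A^{-1} < \infty,
\]
which is exactly the computation sketched in the paragraph preceding the statement.

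Finally, for the operator norm bound, observe that $e^{-tA}$ is diagonal with entries $e^{-t a_n}$, hence
\[
\Lambda_t e_n = Q_t^{-1/2} e^{-t A} e_n = \sqrt{\frac{2 a_n}{1 - e^{-2 t a_n}}}\, e^{-t a_n} e_n = \sqrt{\frac{2 a_n\, e^{-2 t a_n}}{1 - e^{-2 t a_n}}}\, e_n.
\]
The operator $\Lambda_t$ is therefore diagonal, so its norm is the supremum of the absolute values of its eigenvalues. The key elementary inequality is $e^{x} \geqslant 1 + x$ for $x \geqslant 0$, applied with $x = 2 t a_n$, which gives $1 - e^{-2 t a_n} \geqslant 2 t a_n\, e^{-2 t a_n}$ (equivalently $e^{2 t a_n} - 1 \geqslant 2 t a_n$). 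Substituting,
\[
\frac{2 a_n\, e^{-2 t a_n}}{1 - e^{-2 t a_n}} \leqslant \frac{1}{t},
\]
uniformly in $n$, so $\|\Lambda_t\| \leqslant 1/\sqrt{t}$, as required. There is no real obstacle here: the only point that deserves a sentence of care is recording that the pseudo-inverse $Q_t^{-1/2}$ is the diagonal operator just described (so that $\Lambda_t$ really is the bounded operator obtained from the closed graph theorem and is not merely defined on a dense subspace), after which everything reduces to the scalar inequality $e^x \geqslant 1 + x$.
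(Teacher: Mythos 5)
Your proof is correct and follows essentially the same route as the paper: diagonalize in the eigenbasis of $A$, read off the eigenvalues $\tfrac{1-e^{-2ta_n}}{2a_n}$ of $Q_t$ to get the trace bound $\tfrac12\operatorname{Tr}A^{-1}$, and then bound the eigenvalues $\sqrt{2a_n/(e^{2ta_n}-1)}$ of $\Lambda_t$ by $1/\sqrt{t}$ via the elementary inequality $e^x - 1 \geqslant x$. The paper writes the eigenvalues in the slightly different but equivalent form $\tfrac{t(e^{2ta_n}-1)}{2ta_n e^{2ta_n}}$ and $\tfrac{\sqrt{2ta_n}}{t^{1/2}\sqrt{e^{2ta_n}-1}}$, but the computation and the key inequality are identical.
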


Using the properties of Gaussian measures,  we see that the the Ornstein-Uhlenbeck semigroup can be described by the following Mehler formula
\begin{equation}\label{rb-sec3.2-E21}
\left(P_{t}f\right)\left( x \right)=\int_{H}f\left( z+e^{-tA}x\right) N_{0, Q_{t}}\left( dz\right).
\end{equation}

\subsection{Modulus of continuity for harmonic functions}

We establish an \emph{a priori}
 modulus of continuity for harmonic functions.

\begin{lem}\label{rb-s3-T51} Suppose \eqref{e.8.2} is satisfied. If $f$
is a bounded Borel measurable function on $H$ and $t>0$, there exists
a constant $c(t)$ not depending on $f$ such that
\begin{equation}\label{rb-s3-E1}
|P_tf(x)-P_tf(y)|\leqslant c\|f\|_0|x-y|, \qquad x,y\in H.
\end{equation}
Moreover, for any $u \in H$

\[
 D_{u}P_{t}f\left( x\right) \leqslant \left( P_{t}f^{2}\left( x \right)\right)^{1/2}\Vert \Lambda_{t}u\Vert^{2}.
\]

\end{lem}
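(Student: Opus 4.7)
The plan is to compute the directional derivative of $P_t f$ explicitly using a Cameron--Martin type integration by parts, control it by the second moment of $f$, and then recover the Lipschitz bound by integrating along a line segment.

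First I would rewrite the Mehler formula \eqref{rb-sec3.2-E21} by the change of variable $z \mapsto z - e^{-tA}x$, so that the dependence on $x$ sits entirely in the mean of the Gaussian:
\[
P_{t}f(x) \;=\; \int_{H} f(z)\, N_{e^{-tA}x,\,Q_{t}}(dz).
\]
Now fix $u \in H$. The controllability hypothesis \eqref{e.8.2} is precisely what ensures that $e^{-tA}u \in Q_{t}^{1/2}(H)$, so that $\Lambda_{t}u = Q_{t}^{-1/2}e^{-tA}u$ is a well-defined element of $H$ (the closed graph theorem, applied to $\Lambda_t$, was already invoked in \eqref{e.5.3}). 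I would then use the Radon--Nikodym derivative
\[
\frac{dN_{e^{-tA}(x+su),\,Q_{t}}}{dN_{e^{-tA}x,\,Q_{t}}}(z)
\;=\; \exp\!\Big(-\tfrac{s^{2}}{2}|\Lambda_{t}u|^{2} + s\,\langle Q_{t}^{-1/2}(z-e^{-tA}x),\,\Lambda_{t}u\rangle\Big),
\]
recalled earlier in the excerpt, to differentiate $s \mapsto P_{t}f(x+su)$ at $s=0$. After justifying the interchange of $D_{u}$ and the integral (standard for bounded $f$, since the exponential martingale is uniformly $L^{2}$ in a neighborhood of $s=0$), this yields
\[
D_{u}P_{t}f(x) \;=\; \int_{H} f(z)\,\langle Q_{t}^{-1/2}(z-e^{-tA}x),\,\Lambda_{t}u\rangle\, N_{e^{-tA}x,\,Q_{t}}(dz).
\]

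Next I would apply Cauchy--Schwarz with respect to the probability measure $N_{e^{-tA}x,Q_{t}}$. The first factor is $(P_{t}f^{2}(x))^{1/2}$. For the second factor, the shift $y = z - e^{-tA}x$ has law $N_{0,Q_t}$, so by the covariance identity recalled at the start of Section~\ref{S:OU},
\[
\int_{H} \langle Q_{t}^{-1/2}y,\,\Lambda_{t}u\rangle^{2}\,N_{0,Q_{t}}(dy) \;=\; \langle Q_{t}\,Q_{t}^{-1/2}\Lambda_{t}u,\,Q_{t}^{-1/2}\Lambda_{t}u\rangle \;=\; |\Lambda_{t}u|^{2}.
\]
Combining these gives the stated bound $D_{u}P_{t}f(x) \leqslant (P_{t}f^{2}(x))^{1/2}\,\|\Lambda_{t}u\|^{\,\cdot}$ (the exponent on $\|\Lambda_{t}u\|$ in the printed display appears to be a typographical slip; the argument gives exponent~$1$).

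For the Lipschitz estimate \eqref{rb-s3-E1}, I would specialize $u = y - x$ and combine the gradient bound with $P_{t}f^{2}(x) \leqslant \|f\|_{0}^{2}$ and $|\Lambda_{t}u| \leqslant \|\Lambda_{t}\|\,|u|$, then integrate along the straight segment from $x$ to $y$:
\[
|P_{t}f(x)-P_{t}f(y)| \;\leqslant\; \int_{0}^{1} |D_{y-x}P_{t}f(x+s(y-x))|\,ds \;\leqslant\; \|f\|_{0}\,\|\Lambda_{t}\|\,|x-y|,
\]
so one may take $c(t) = \|\Lambda_{t}\|$ (which by Proposition \ref{p.3.1} is at most $t^{-1/2}$ in the diagonal setting). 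The only real point to watch is the Cameron--Martin differentiation step: it requires exactly the controllability condition \eqref{e.8.2} so that the exponential density is finite and differentiable in $s$, and it is where the operator $\Lambda_{t}$ naturally enters; everything else is Cauchy--Schwarz and fundamental theorem of calculus.
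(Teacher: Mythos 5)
Your proposal is correct and follows essentially the same route as the paper: Cameron--Martin differentiation of the Gaussian density in the mean, then Cauchy--Schwarz against $N_{0,Q_t}$ to produce $(P_tf^2)^{1/2}$ times the $L^2$-norm of the linear functional $\langle Q_t^{-1/2}\,\cdot\,,\Lambda_t u\rangle$, then boundedness of $\Lambda_t$ to conclude the Lipschitz estimate. You are also right that the printed exponent on $\Vert\Lambda_t u\Vert$ is a slip: the covariance computation gives $\vert\Lambda_t u\vert$ to the first power, exactly as in your argument.
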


\begin{proof}
Consider $N_{0, Q_{t}}\left( dz \right)$, a centered Gaussian measure with covariance $Q_{t}$. By the Cameron-Martin theorem the
transition probability
$$P_{t}^{x}\left( dz \right)=N_{e^{-tA}x, Q_{t}}\left( dz \right)$$
 has a density with respect to $N_{0, Q_{t}}\left( dz \right)$  given by
\begin{equation}\label{e.4.4}
J_{t}\left( x, z\right):=\frac{N_{e^{-tA}x, Q_{t}}\left( dz \right)}{N_{0, Q_{t}}\left( dz \right)}=\exp\left( \langle \Lambda_{t}x, Q_{t}^{-1/2} z\rangle-\frac{1}{2}\vert \Lambda_{t}x\vert ^{2}\right).
\end{equation}
Thus
\begin{equation}\label{e.5.2}
\left( P_{t}f\right)\left( x\right)=\int_{H} J_{t}\left( x, z\right) f\left( z \right) N_{0, Q_{t}}\left( dz \right).
\end{equation}
Now we can use \eqref{e.5.2} to estimate the derivative $D_{u}P_{t}f$ for any $u \in H$
by
\begin{align*}
 D_{u}P_{t}f\left( x\right)&=\int_{H}\langle \Lambda_{t}u, Q_{t}^{-1/2} \left( z-e^{-At}x\right)\rangle f\left( z \right)J_{t}\left( x, z \right) N_{0, Q_{t}}\left( dz \right)\\
&=
 \int_{H}\langle \Lambda_{t}u, Q_{t}^{-1/2}  z\rangle f\left( z+e^{-At}x \right) N_{0, Q_{t}}\left( dz \right)\\
&\leqslant
\left( P_{t}f^{2}\left( x \right)\right)^{1/2}\left( \int_{H}\vert \langle \Lambda_{t}u, Q_{t}^{-1/2}z \rangle \vert^{2} N_{0, Q_{t}}\left( dz \right)\right)^{1/2}\\
&=
 \left( P_{t}f^{2}\left( x \right)\right)^{1/2}\Vert \Lambda_{t}u\Vert^{2}.
\end{align*}
Note that $\Lambda_{t}$ is  bounded, therefore for bounded measurable functions $f$ we see that $P_{t}f$ is uniformly Lipschitz, and therefore strong Feller.
\end{proof}

\begin{as}\label{OUas}
We now suppose $Q=I$ and that $A$ is diagonal in an orthonormal basis
$\{e_{n}\}_{n=1}^\infty$ of $H$ with eigenvalues $a_{n}$ being a sequence
of positive numbers. Moreover, we assume that $a_{n}/n^p\to \infty$ for some $p>3$.
\end{as}
Note that under this assumption $A^{-1}$ is trace-class for $p>3$, and therefore by Proposition \ref{p.3.1} the operator $Q_{t}$ is trace-class as well. We need the following lemma.

\begin{lem}\label{rb-s3-L31} Suppose $X_t$ is an Ornstein-Uhlenbeck
process with $Q$ and $A$ satisfying Assumption \ref{OUas}. Let $r>q>0$ and $\varepsilon>0$. Then
there exists $t_0$ such that
$$\mathbb{P}^x(\sup_{s\leqslant t_0} |X_s|>r)\leqslant \varepsilon, \qquad x\in B(0,q).$$
\end{lem}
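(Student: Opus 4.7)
The plan is to split the trajectory into its deterministic drift and its mean-zero noise. Writing
\[
X_s^x = e^{-sA}x + M_s, \qquad M_s := \int_0^s e^{-A(s-u)}\,dW_u,
\]
and using that $A$ has positive eigenvalues, so that $e^{-sA}$ is a contraction on $H$, one has $|X_s^x| \leqslant |e^{-sA}x| + |M_s| \leqslant q + |M_s|$ uniformly for $x \in B(0,q)$. Consequently,
\[
\mathbb{P}^x\bigl(\sup_{s \leqslant t_0}|X_s|>r\bigr) \leqslant \mathbb{P}\bigl(\sup_{s \leqslant t_0}|M_s|>r-q\bigr),
\]
and it suffices to show that the right-hand side can be made less than $\varepsilon$ by choosing $t_0$ small, independently of $x$.

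Next I would bound $\mathbb{E}[\sup_{s \leqslant t_0}|M_s|^2]$ and conclude by Chebyshev. Diagonalising $A$ in the basis $\{e_n\}$, the coordinates $M_s^{(n)} := \langle M_s,e_n\rangle = \int_0^s e^{-a_n(s-u)}\,dW_u^{(n)}$ are independent one-dimensional Ornstein--Uhlenbeck processes, and since $\sup_s \sum_n \alpha_n(s) \leqslant \sum_n \sup_s \alpha_n(s)$ for nonnegative $\alpha_n$,
\[
\mathbb{E}\bigl[\sup_{s \leqslant t_0}|M_s|^2\bigr] \leqslant \sum_{n=1}^\infty \mathbb{E}\bigl[\sup_{s \leqslant t_0}|M_s^{(n)}|^2\bigr].
\]
For each summand I would use the Dambis--Dubins--Schwarz time change $M_s^{(n)} = e^{-a_n s} B^{(n)}_{\psi_n(s)}$ with $\psi_n(s)=(e^{2a_n s}-1)/(2a_n)$ and $B^{(n)}$ a standard Brownian motion, giving
\[
\sup_{s \leqslant t_0} |M_s^{(n)}|^2 = \sup_{v \leqslant \psi_n(t_0)} \frac{|B_v^{(n)}|^2}{1+2a_n v}.
\]
Splitting the supremum at $v=1/(2a_n)$, Doob's $L^2$ inequality controls the range $v \leqslant 1/(2a_n)$; on $v \geqslant 1/(2a_n)$ the substitution $v = e^u/(2a_n)$ turns $B_v/\sqrt{v}$ into a stationary unit-variance Ornstein--Uhlenbeck process on a $u$-interval of length $\log(e^{2a_n t_0}-1)$, whose squared supremum has expectation of order $\log(2+a_n t_0)$. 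Combining the two regimes yields
\[
\mathbb{E}\bigl[\sup_{s \leqslant t_0}|M_s^{(n)}|^2\bigr] \leqslant C \min\Bigl(t_0,\ \tfrac{\log(2+a_n t_0)}{a_n}\Bigr).
\]

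Using $a_n \geqslant c n^p$ with $p > 3$, I would split the outer sum at the index $n_0 \sim t_0^{-1/p}$ where $a_n t_0 \approx 1$. The low-$n$ contribution is at most $C n_0 t_0 \sim t_0^{1-1/p}$; for the high-$n$ tail, a comparison with the convergent integral $\int_1^\infty \log(2+y^p)/y^p\,dy$ shows the contribution is also $O(t_0^{1-1/p})$. Hence $\mathbb{E}[\sup_{s \leqslant t_0}|M_s|^2] \leqslant C t_0^{1-1/p} \to 0$ as $t_0 \to 0$, and Chebyshev gives $\mathbb{P}(\sup_{s \leqslant t_0}|M_s|>r-q) \leqslant C t_0^{1-1/p}/(r-q)^2$, which is at most $\varepsilon$ for $t_0$ small.

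The main obstacle is the per-coordinate bound. A naive application of Doob to the exponential martingale $\int_0^s e^{a_n u}\,dW_u^{(n)}$ yields a bound of order $(e^{2a_n t_0}-1)/a_n$, which grows exponentially in $a_n$ and is emphatically not summable over $n$. One must exploit both the damping factor $e^{-a_n s}$ and the approximate stationarity of the one-dimensional Ornstein--Uhlenbeck process beyond the time scale $1/a_n$ to replace this by the logarithmic bound $\log(2+a_n t_0)/a_n$, which is summable under the hypothesis $a_n/n^p \to \infty$ for some $p > 1$; the assumed $p>3$ is comfortably enough.
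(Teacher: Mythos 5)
Your argument is correct, and it takes a genuinely different route from the paper's. The paper works coordinate-by-coordinate: it cites the Graversen--Peskir maximal inequality to get $\mathbb{E}\sup_{s\leqslant t_0}|X^n_s|\lesssim \sqrt{\log(1+a_nt_0)}/\sqrt{a_n}$, applies Markov's inequality to each coordinate with thresholds $d_n\sim n^{-1/2-\delta}$ chosen so that $\sum d_n^2=(r-q)^2$, and then union-bounds; the resulting summability requirement $\sum n^{1/2+\delta-p/2}\sqrt{\log(1+a_nt_0)}<\infty$ is what forces $p>3$. You instead isolate the stochastic-convolution part $M_s$ via $X_s=e^{-sA}x+M_s$ (using that $e^{-sA}$ is a contraction to get uniformity in $x\in B(0,q)$), bound $\mathbb{E}\sup_{s\leqslant t_0}|M_s|^2$ coordinate-wise, and apply a single global Chebyshev in $L^2$. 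This has two advantages: the threshold constraint $\sum d_n^2=(r-q)^2$ is built in automatically, and the second-moment bound only needs $p>1$ rather than $p>3$ (though the lemma of course only claims the result for $p>3$). The cost is that you must rederive the per-coordinate estimate $\mathbb{E}\sup_{s\leqslant t_0}|M_s^{(n)}|^2\lesssim \min\bigl(t_0,\log(2+a_nt_0)/a_n\bigr)$ from scratch via Dambis--Dubins--Schwarz, Doob's inequality, and the logarithmic growth of the supremum of a stationary Ornstein--Uhlenbeck process, where the paper simply quotes Graversen--Peskir. Two small imprecisions worth noting: the tail sum $\sum_{n>n_0}\log(2+a_nt_0)/a_n$ is $O(t_0^{1-1/p}|\log t_0|)$ rather than $O(t_0^{1-1/p})$ once you track the logarithm honestly, which still tends to zero; and the integral comparison as you wrote it presumes $a_n\asymp n^p$, whereas Assumption \ref{OUas} only gives $a_n\gtrsim n^p$ --- but since $x\mapsto \log(3x)/x$ is decreasing for $x\geqslant 1$, the bound $\log(3a_n)/a_n\leqslant \log(3n^p)/n^p$ holds for large $n$ and repairs this.
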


\begin{proof}
We first consider the $n$th component of $X_{s}$. Taking the stopping time $\tau$
identically equal to $t_0$, the main theorem
of  \cite[Theorem 2.5]{GraversenPeskir2000} tells us that

\[
\mathbb{E} \sup_{s\leqslant t_0} |X^n_s|\leqslant
\frac{c\sqrt{\log(1+a_nt_0)}}{\sqrt{a_n}}.
\]
Then by Chebyshev's inequality,
\begin{equation}\label{rb-s3-E31}
\mathbb{P}(\sup_{s\leqslant t_0} |X^n_s|\geqslant d_n)\leqslant
\frac{c\sqrt{\log(1+a_nt_0)}}{d_n\sqrt{a_n}}
\end{equation}
for any positive real number $d_n$.

Choose $\delta>0$ small so that $(p-1)/2>1+\delta$.
Take $d_n=C(r-q) n^{-1/2-\delta}$, where $C$ is chosen so that
$C^2\sum_{n=1}^\infty n^{-1-2\delta}=1$.
Then
$\mathbb{P}(\sup_{s\leqslant t_0} |X^n_s|\geqslant d_n)$
is summable in $n$, and if we choose $n_0$ large enough,
\[
\sum_{n=n_0}^\infty \mathbb{P}(\sup_{s\leqslant t_0} |X^n_s|\geqslant d_n)<\varepsilon/2.
\]
By taking $t_0$ smaller if necessary, we then have

\[
\sum_{n=1}^\infty \mathbb{P}(\sup_{s\leqslant t_0} |X^n_s|\geqslant d_n)<\varepsilon.
\]

Suppose $|x|\leqslant q$ and we start the process at  $x$. By symmetry, we may
assume each coordinate of $x$ is non-negative. Since
\[
|X_s|\leqslant |X_s-x|+|x|,
\]
we observe that in
order for the process to exit the ball $B(0,r)$
before time $t_0$, for some coordinate $n$ we must have $|X_s^n|$
increasing by at least $d_n$. The probability of this happening
is largest when $x_n=0$. But the probability that for some $n$ we have
$|X^n_s|$ increasing by at least $d_n$ in time $t_0$  is bounded by $\varepsilon$.
\end{proof}

\begin{thm}\label{rb-s3-T41} Suppose $X_t$ is an Ornstein-Uhlenbeck
process with $Q$ and $A$ satisfying Assumption \ref{OUas}.
If $h$ is a bounded harmonic function in the ball $B( 0, 1)$, there is a constant $c$  such that
\begin{equation}\label{rb-s3-Emoc}
\vert h\left( x \right) -h\left( y \right)\vert \leqslant c \Vert h \Vert_{0} \vert x-y \vert, \qquad x,y\in B(0,1/2).
\end{equation}
\end{thm}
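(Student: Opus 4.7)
The plan is to transfer the strong Feller (Lipschitz) smoothing of the Ornstein-Uhlenbeck semigroup $P_t$ --- supplied by Lemma \ref{rb-s3-T51} and Proposition \ref{p.3.1} --- back to the harmonic function $h$ by combining the strong Markov property of $X$, the harmonicity of $h$, and the exit-time control of Lemma \ref{rb-s3-L31}. The overall scheme is: extend $h$ to a bounded Borel function on $H$, compare $h$ on $B(0,1/2)$ to $P_t\tilde h$ by an error that is controlled by the probability of exiting $B(0,1)$ before time $t$, then apply the semigroup Lipschitz bound to $P_t\tilde h$.

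Concretely, I would set $\tilde h := h\mathbf{1}_{B(0,1)}$, so $\|\tilde h\|_0 \leqslant \|h\|_0$. For $x \in B(0,1/2)$ and $t>0$, harmonicity of $h$ on $B(0,1)$ together with the optional stopping theorem applied to the bounded martingale $s \mapsto h(X_{s \wedge \tau_1})$ (where $\tau_1$ is the first exit of $X$ from $B(0,1)$) yields
$$h(x) = \mathbb{E}^x\bigl[h(X_{t\wedge\tau_1})\bigr] = P_t \tilde h(x) + \mathbb{E}^x\bigl[\bigl(h(X_{\tau_1}) - \tilde h(X_t)\bigr)\mathbf{1}_{\{\tau_1 \leqslant t\}}\bigr].$$
Subtracting this identity for $x,y \in B(0,1/2)$ and invoking Lemma \ref{rb-s3-T51} together with Proposition \ref{p.3.1} on the semigroup piece gives
$$|h(x) - h(y)| \leqslant c\,t^{-1/2}\|h\|_0\,|x-y| + 2\|h\|_0\bigl(\mathbb{P}^x(\tau_1 \leqslant t) + \mathbb{P}^y(\tau_1 \leqslant t)\bigr).$$
By Lemma \ref{rb-s3-L31}, applied with $q = 1/2$ and $r = 1$, the exit probabilities can be made as small as desired by taking $t$ sufficiently small, uniformly in $x,y \in B(0,1/2)$.

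The main obstacle is that the Lipschitz constant $c\,t^{-1/2}$ of $P_t$ blows up precisely as the exit-error is driven to zero, so a single fixed choice of $t$ produces only a uniform continuity modulus and not the linear bound \eqref{rb-s3-Emoc}. To recover the Lipschitz conclusion I would extract a quantitative decay rate for $\mathbb{P}^x(\tau_1 \leqslant t)$ as $t\downarrow 0$, by summing the Graversen-Peskir coordinate estimate $\mathbb{E}\sup_{s\leqslant t}|X^n_s| \leqslant c\sqrt{\log(1+a_nt)}/\sqrt{a_n}$ used in the proof of Lemma \ref{rb-s3-L31} (the hypothesis $a_n/n^p \to \infty$ with $p>3$ is tailored to make the resulting series converge and yield a polynomial rate in $t$), and then optimize $t$ as a function of $|x-y|$. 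For $|x-y|$ bounded away from $0$ the trivial bound $|h(x)-h(y)|\leqslant 2\|h\|_0$ already suffices, so the substantive work lies in the microscopic regime, where I expect the delicate trade-off between the gradient blow-up and the exit-probability decay to be the crux of the proof.
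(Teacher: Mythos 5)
Your first half reproduces the paper's proof almost verbatim: fix $\varepsilon>0$, use Lemma~\ref{rb-s3-L31} to find $t_0$ with $\sup_{x\in B(0,1/2)}\mathbb{P}^x(\tau<t_0)<\varepsilon$, write $h(x)=\mathbb{E}^x h(X_\tau)$, split on $\{\tau<t_0\}$ and $\{\tau\geqslant t_0\}$, use the Markov property on the second piece to compare with $P_{t_0}h(x)$ up to error $2\|h\|_0\varepsilon$, and then apply the semigroup Lipschitz bound of Lemma~\ref{rb-s3-T51} (with $\|\Lambda_{t_0}\|\leqslant t_0^{-1/2}$ from Proposition~\ref{p.3.1}). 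The paper's own display is exactly your estimate
\[
|h(x)-h(y)|\leqslant c(t_0)\|h\|_0\,|x-y|+4\|h\|_0\varepsilon,
\]
and the paper stops there, concluding with the sentence ``This proves the uniform modulus of continuity.'' In other words, the paper does \emph{not} carry out the optimization in $t_0$ that you flag as ``the crux''; it simply asserts the modulus-of-continuity conclusion (which is also what the abstract claims), and that is genuinely all that the displayed inequality yields.

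So your observation that a fixed $t_0$ only produces a uniform modulus of continuity, not the literal Lipschitz bound \eqref{rb-s3-Emoc}, is a correct and sharp reading --- you have spotted a gap between the theorem statement and what the proof delivers. Be cautious, though, about the patch you propose. The exit-probability decay one can extract from the Graversen--Peskir bound in Lemma~\ref{rb-s3-L31} is of the form $\mathbb{P}^x(\tau_1\leqslant t)\lesssim t^{\gamma}$ (up to logarithms) with some $\gamma\in(0,1/2)$ depending on $p$ and $\delta$, while the gradient of $P_t$ blows up like $t^{-1/2}$. Optimizing $t\sim|x-y|^{\beta}$ balances $t^{-1/2}|x-y|$ against $t^{\gamma}$ at a H\"older exponent $2\gamma/(1+2\gamma)<1$, so this route gives only a H\"older --- not Lipschitz --- modulus. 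To reach the literal form \eqref{rb-s3-Emoc} you would need a substantially stronger exit estimate than what the lemma or its proof supplies; as it stands, your plan strengthens the paper's argument but does not close the gap you identified.
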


\begin{proof}
Let $\varepsilon>0$ and let $\tau$ be the exit time from $B(0,1)$.
By Lemma \ref{rb-s3-L31} we  can choose $t_0$ such that
\[
\mathbb{P}^x(\tau<t_0)<\varepsilon, \qquad  x\in B\left(0, 1/2 \right).
\]
If $h$ is harmonic in $B(0,1)$ and $x,y \in B(0,1/2)$,
\[
h(x)=\mathbb{E}^x h(X_\tau)=\mathbb{E}^x[h(X_\tau); \tau<t_0]+
\mathbb{E}^x[h(X_\tau): \tau\geqslant t_0].
\]
The first term is bounded by $\| h\|_0 \varepsilon$.
By the Markov property the second term is
equal to
\[
\mathbb{E}^x[\mathbb{E}^{X_{t_0}}h(X_\tau); \tau\geqslant t_0]=\mathbb{E}^x[h(X_{t_0}); \tau\geqslant t_0],
\]
which differs from $P_{t_0}h(x)$ by at most $\| h\|_0 \varepsilon$.
We have a similar estimate for $h(y)$.
Therefore by Lemma \ref{rb-s3-T51}
\[
|h(x)-h(y)|\leqslant |P_{t_0}h(x)-P_{t_0}h(y)|+ 4\|  h\|_0\varepsilon
\leqslant c(t_0)\vert x-y\vert  \, \| h\|_0 +4\|h\|_0 \varepsilon.
\]
This proves the uniform modulus of continuity.
\end{proof}

\begin{rem}\label{rb-s3-R1} We remark that the constant $c$ in the statement
of  Theorem \ref{rb-s3-T41}
depends on $r$.
Moreover, there does not exist a constant $c$ independent of $z_0$
 such that \eqref{rb-s3-Emoc} holds for $x,y\in B(z_0,r/2)$
when $h$ is harmonic in $B(z_0,r)$.
It is not hard to see that this is the case  even for the two-dimensional Ornstein-Uhlenbeck
process.
\end{rem}

\subsection{Counterexample to the Harnack inequality}

As we have seen, the transition probabilities for the Ornstein-Uhlenbeck process $Z_{t}$ are
\[
P_{t}^{x}\left( dz \right):=N_{e^{-tA}x, Q_{t}}\left( dz \right).
\]
Suppose now that $Q=I$ and $A$ satisfy Assumption \ref{OUas} with $p=1$, but also that $a_n$ is an increasing sequence with $A^{-1}$ being a trace-class operator on $H$. As examples of such $a_n$,  we can take $a_{n}=n^{p}$ for $p>1$.

Denote by $P_{n}$ the orthogonal projection on $H_{n}:=\operatorname{Span}\{ e_{1}, ..., e_{n}\}$. Then
\[
P_{t}^{P_{n}x}\left( dP_{n}z \right):=p_{n}\left( t, P_{n}x, P_{n}z \right) dz,
\]
where
\begin{align*}
 p_{n}( t, &P_{n}x, P_{n}z )\\
&= \prod_{j=1}^{n}\left( \frac{1}{2 \pi} \frac{2a_{j}}{1-e^{-2a_{j}t}}\right)^{1/2} \exp\left(-\frac{2a_{j}\left( z_{j}-e^{-a_{j}t}x_{j}\right)^{2}}{2\left( 1-e^{-2a_{j}t}\right)}\right).
\end{align*}
We would like to consider the Green function $h_{n}$
with pole at $z_n=4e_n$ for $Z_{t}$ killed when $Z_{t}^{1}$ exceeds $6$ in absolute value.
 We use a killed process to insure transience. We will show that
\[
\frac{h_{n}\left( x_n \right)}{h_{n}\left( 0 \right)} \rightarrow \infty
\]
as $n\to \infty$, where $x_n=e_n$.
The key  is to estimate the Green function
\[
h_{n}\left( x, z \right):=\int_{0}^{\infty} \tilde{p}_{n}\left( t, P_{n}x, P_{n}z \right) dt,
\]
where $ \tilde{p}_{n}$ is the density for the killed process. We will prove
 an upper estimate on $h_{n}\left( 0, z_n \right)$ and a lower estimate on $h_{n}\left( x_n, z_n \right)$.

First we  need the following lemma.

\begin{lem}\label{rb-s3-L1} Let $a>0$ and let  $Y_t$ be a one-dimensional Ornstein-Uhlenbeck
process that solves the stochastic differential equation
$$dY_t=\, dB_t-aY_t\, dt,$$
where $B_t$ is a one-dimensional Brownian motion and $a>0$. Let $\widetilde Y$
be $Y$ killed on first exiting $[-6,6]$, let $q(t,x,y)$ be the
transition densities for $Y$, and let $\widetilde q(t,x,y)$ be the
transition densities for $\widetilde Y$.\\
(1)  There exist constants $c$ and $\beta$
such that
 $$\widetilde q(t,0,0)\leqslant ce^{-\beta t}, \qquad t\ge 1.$$
(2) We have $$\frac{\widetilde q(t,0,0)}{q(t,0,0)}\to 1$$
as $t\to 0$.
\end{lem}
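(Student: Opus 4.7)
For part (1), my plan is to reduce the pointwise density bound to exponential decay of the survival probability $\mathbb{P}^0(\tau > t)$. For $t \geq 1$ the Chapman--Kolmogorov identity for the killed semigroup gives
$$\widetilde q(t,0,0) = \int_{-6}^{6} \widetilde q(t-1, 0, y)\, \widetilde q(1, y, 0)\, dy.$$
Since $\widetilde q(1, y, 0) \leq q(1, y, 0)$ and the unkilled OU density $q(1, y, 0)$ is a bounded Gaussian in $y \in [-6, 6]$, this is at most $C \cdot \mathbb{P}^0(\tau > t-1)$. It then suffices to prove exponential decay of $\mathbb{P}^0(\tau > t)$. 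I would establish this from a single-step lower bound on the exit probability: for each $y \in [-6, 6]$, under $\mathbb{P}^y$ the variable $Y_1$ is Gaussian with mean $y e^{-a} \in [-6e^{-a}, 6e^{-a}]$ (in the interior of $[-6,6]$) and positive variance $(1-e^{-2a})/(2a)$, so $\mathbb{P}^y(|Y_1| > 6) > 0$. By compactness and continuity, $p := \sup_{y \in [-6,6]} \mathbb{P}^y(\tau > 1) < 1$, and iterating the Markov property at integer times yields $\mathbb{P}^0(\tau > n) \leq p^n$, hence the claimed $\widetilde q(t, 0, 0) \leq c e^{-\beta t}$.

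For part (2), the plan is to isolate the contribution to $q(t, 0, 0)$ of paths that leave $[-6,6]$ before time $t$. By the strong Markov property applied at $\tau$,
$$q(t, 0, 0) - \widetilde q(t, 0, 0) = \mathbb{E}^0\bigl[q(t-\tau, Y_\tau, 0);\, \tau \leq t\bigr].$$
On the event $\{\tau \leq t\}$ we have $Y_\tau = \pm 6$, and the explicit OU transition density
$$q(s, \pm 6, 0) = \sqrt{\tfrac{a}{\pi(1 - e^{-2as})}}\, \exp\!\Bigl(-\tfrac{36\, a\, e^{-2as}}{1-e^{-2as}}\Bigr)$$
is bounded uniformly in $s > 0$: as $s \downarrow 0$ one has $1 - e^{-2as} \sim 2as$, so the expression is $\sim (2\pi s)^{-1/2} e^{-18/s}$, which tends to $0$; and for $s$ bounded away from $0$ the expression is continuous and tends to the finite limit $\sqrt{a/\pi}$ as $s \to \infty$. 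Denoting this supremum by $M$, one gets
$$q(t, 0, 0) - \widetilde q(t, 0, 0) \leq M\, \mathbb{P}^0(\tau \leq t).$$
Combining with $q(t, 0, 0) \sim (2\pi t)^{-1/2}$ and the trivial bound $\mathbb{P}^0(\tau \leq t) \leq 1$, the ratio is at most $c\sqrt{t}\, \mathbb{P}^0(\tau \leq t) \to 0$, and hence $\widetilde q(t, 0, 0)/q(t, 0, 0) \to 1$ as $t \to 0$.

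I do not anticipate any serious obstacle. The only item that warrants a little care is the uniform estimate on $q(s, \pm 6, 0)$ in part (2), where one must verify that the exponential suppression $e^{-18/s}$ near $s = 0$ dominates the $s^{-1/2}$ singularity of the Gaussian prefactor. Once that uniform bound is in hand, part (2) reduces to $\sqrt{t} \to 0$, and part (1) is a standard iterated-Markov argument.
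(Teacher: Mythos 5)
Your proposal is correct, and part~(2) follows essentially the same route as the paper: apply the strong Markov property at the exit time to write $q(t,0,0)-\widetilde q(t,0,0)$ as an expectation involving $q(t-\tau,\pm 6,0)$, bound that density uniformly in $s$ (with the check that $e^{-c/s}$ dominates $s^{-1/2}$ near $s=0$), and divide by $q(t,0,0)\sim(2\pi t)^{-1/2}$. The only cosmetic difference there is that the paper notes $\mathbb{P}^0(\tau\le t)\to 0$ as $t\to 0$, whereas you use only the trivial bound $\mathbb{P}^0(\tau\le t)\le 1$ together with $q(t,0,0)\to\infty$; both suffice.

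Part~(1) is where you genuinely diverge. The paper appeals to the Sturm--Liouville spectral expansion of the killed transition density with respect to the speed measure: by Mercer's theorem $\widetilde q$ expands as $\sum_i e^{-\beta_i t}\varphi_i(x)\varphi_i(y)$ with $0<\beta_1\le\beta_2\le\cdots$, from which the exponential decay for $t\ge1$ is immediate. You instead reduce via Chapman--Kolmogorov to the survival probability $\mathbb{P}^0(\tau>t-1)$ and then get geometric decay of $\mathbb{P}^0(\tau>n)$ by a uniform one-step exit bound iterated through the Markov property. Both are correct. The spectral argument is shorter once one accepts the eigenfunction expansion, and it identifies the sharp rate $\beta_1$; your argument is more elementary and self-contained, needing only that $Y_1$ has a nondegenerate Gaussian law under every $\mathbb{P}^y$. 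One small point worth tightening: rather than invoking ``compactness and continuity'' of $y\mapsto\mathbb{P}^y(\tau>1)$, you can get the uniform bound $p<1$ directly, since for every $y\in[-6,6]$ the mean $ye^{-a}$ lies in $[-6e^{-a},6e^{-a}]\subset(-6,6)$ and the variance $(1-e^{-2a})/(2a)$ is a fixed positive constant, so $\mathbb{P}^y(|Y_1|>6)\ge\mathbb{P}(Z>6+6e^{-a})>0$ for a fixed centered Gaussian $Z$; this sidesteps any regularity discussion of the survival probability.
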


\begin{proof} The transition densities of $\widetilde Y$ with respect to
the measure $e^{-x^2/2}\, dx$ are symmetric and by
Mercer's theorem can be written in the form
$$\sum_{i=1}^\infty e^{-\beta_it}\varphi_i(x)\varphi_i(y)$$
with $0<\beta_1\le\beta_2\leqslant \beta_3\leqslant \cdots$.
Here the $\beta_i$ are the eigenvalues and the $\varphi_i$ are
the corresponding eigenfunctions for the Sturm-Liouville problem
$$\left\{\begin{array}{ll}
 Lf(x)&=\frac12 f''(x)-af'(x)=-\beta f(x),\\
f(-6)&=f(6)=0.
\end{array}\right.$$
See \cite[Chapter IV, Section 5]{BassBook1998} for details.
(1) is now immediate.

Let $U$ be the first exit of $Y$ from $[-6,6]$. Using the strong
Markov property at $U$, we have the well known formula
$$q(t,0,0)=\widetilde q(t,0,0)+\int_0^t \mathbb{E}^0
\left[q(t-s,Y_s,0); U\in \, ds\right].$$
Using symmetry, this leads to
\begin{equation}\label{rb-s3-E21}
q(t,0,0)=\widetilde q(t,0,0)+\int_0^t q(t-s,6,0)\mathbb{P}^0(U\in\, ds).
\end{equation}
Now by the explicit formula for $q(r,x,y)$, we see that
$q(t-s,6,0)$ is bounded in $s$ and $t$ and so the second term on the
right hand side of \eqref{rb-s3-E21}  is bounded by a constant times $\mathbb{P}^0(U\leqslant t)$,
which tends to 0 as $t\to 0$. On the other hand, $q(t,0,0)\sim (2\pi t)^{-1/2}
\to \infty$ as $t\to 0$. (2) now follows by dividing both sides of
\eqref{rb-s3-E21} by $q(t,0,0)$.
\end{proof}

We now proceed to an upper estimate for the Green function.

\begin{prop}\label{rb-s3-P22} There are constants $K>0$ and $c>0$ such that
\[
h_{n}\left( 0, z \right) \leqslant K c^{n} a_{n}^{n/2}e^{-16a_{n}}.
\]
\end{prop}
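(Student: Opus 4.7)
The plan is to split the Green-function integral as $\int_0^1+\int_1^\infty$ and to exploit, in the small-$t$ regime, the strong Gaussian decay of the $n$-th coordinate density, and in the large-$t$ regime, the exponential decay of the killed first coordinate supplied by Lemma~\ref{rb-s3-L1}(1). Since $Q=I$ and $A$ is diagonal in $\{e_j\}$, the coordinates of $Z_t$ are independent one-dimensional Ornstein--Uhlenbeck processes, and the killing event depends only on the first coordinate. Writing $q_a(t,x,y)$ for the 1-D OU density with parameter $a$ and $\widetilde{q}_a$ for its version killed on $[-6,6]$, this factorisation gives
\[
\tilde p_n(t,0,P_nz)=\widetilde{q}_{a_1}(t,0,0)\,\prod_{j=2}^{n-1}q_{a_j}(t,0,0)\cdot q_{a_n}(t,0,4).
\]

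A direct computation shows that $a\mapsto a/(1-e^{-2at})$ is strictly increasing on $(0,\infty)$ for each fixed $t>0$, so $q_{a_j}(t,0,0)\leqslant q_{a_n}(t,0,0)$ whenever $j\leqslant n$. Together with the identity $q_{a_n}(t,0,4)=q_{a_n}(t,0,0)\exp(-16a_n/(1-e^{-2a_nt}))$, dropping the killing in the first factor yields
\[
\tilde p_n(t,0,P_nz)\leqslant \left(\frac{a_n}{\pi u}\right)^{n/2}\exp\!\left(-\frac{16a_n}{u}\right),\qquad u:=1-e^{-2a_nt}\in(0,1).
\]
The map $u\mapsto u^{-n/2}e^{-16a_n/u}$ on $(0,1)$ has a unique critical point at $u^{*}=32a_n/n$; Assumption~\ref{OUas} gives $a_n/n\to\infty$, so for all sufficiently large $n$ we have $u^{*}>1$ and the map is increasing on $(0,1)$. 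Its supremum on $(0,1)$ is the boundary value $e^{-16a_n}$, producing the pointwise bound $\tilde p_n(t,0,P_nz)\leqslant (a_n/\pi)^{n/2}e^{-16a_n}$ for $t\in(0,1)$, and hence $\int_0^1\tilde p_n\,dt\leqslant c^na_n^{n/2}e^{-16a_n}$.

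For the tail $t\geqslant 1$, invoke Lemma~\ref{rb-s3-L1}(1) for $\widetilde{q}_{a_1}(t,0,0)\leqslant ce^{-\beta t}$; since $q_a(t,0,0)$ is decreasing in $t$ and $1-e^{-2a_n}\geqslant 1-e^{-2}$ once $a_n\geqslant 1$, each middle factor is at most $c\sqrt{a_n}$, and $q_{a_n}(t,0,4)\leqslant c\sqrt{a_n}\,e^{-16a_n}$. Multiplying gives
\[
\tilde p_n(t,0,P_nz)\leqslant c^n a_n^{(n-1)/2}e^{-16a_n}e^{-\beta t},
\]
and integration over $[1,\infty)$ contributes at most $c^na_n^{n/2}e^{-16a_n}$. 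Summing the two pieces and enlarging $K$ to cover the finitely many small $n$ excluded by the condition $u^{*}>1$ yields the proposition.

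The main obstacle is preserving the full factor $e^{-16a_n}$ while compressing the product of $n$ densities into $c^na_n^{n/2}$. The decisive step is the monotonicity $q_{a_j}(t,0,0)\leqslant q_{a_n}(t,0,0)$, which collapses the $n$-parameter product to a single variable $u$ so that the location of the maximum of $u^{-n/2}e^{-16a_n/u}$ can be computed explicitly; Assumption~\ref{OUas} is exactly what forces this maximum to the boundary $u=1$ so that the entire $e^{-16a_n}$ is retained.
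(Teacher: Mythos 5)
Your proof is correct, and it takes a genuinely different route from the paper's. The paper splits the time integral into three regimes, $(0,1/(2a_n)]$, $(1/(2a_n),1]$ and $(1,\infty)$, bounding the normalizing factors $a_j/(1-e^{-2a_jt})$ by $1/t$ for the coordinates with $2a_jt\leqslant 1$ and by $ca_j$ for the remaining ones (this is the reason for the intermediate index $n_0$ in the paper's Step~2), and then optimizing the function $e^{-8/t}(\pi t)^{-n/2}$ of $t$ over the small-time interval. Your approach replaces this with a single monotonicity observation: $a\mapsto a/(1-e^{-2at})$ is increasing, so $q_{a_j}(t,0,0)\leqslant q_{a_n}(t,0,0)$ for all $j\leqslant n$ and all $t$. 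This collapses the entire product into a function of the single variable $u=1-e^{-2a_nt}\in(0,1)$, and the pointwise bound on $(0,1)$ is obtained from the calculus fact that the critical point $u^{*}=32a_n/n$ lies outside $(0,1)$ once $a_n/n>1/32$, which Assumption~\ref{OUas} (with $p=1$) guarantees for large $n$. The upshot is a two-piece split instead of three and no need for the auxiliary index $n_0$; the trade-off is that the monotonicity lemma must be verified, which you do correctly (the numerator of $g'(a)$ is $1-(1+s)e^{-s}>0$ for $s>0$). Your tail estimate matches the paper's Step~3, using Lemma~\ref{rb-s3-L1}(1) for the killed first coordinate and the fact that $q_a(t,0,0)$ is decreasing in $t$ for the remaining coordinates, and the final enlargement of $K$ to cover the finitely many excluded $n$ is legitimate since each $h_n(0,z)$ is finite. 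Both approaches deliver the same final bound; yours is arguably more transparent because all of $(0,1)$ is handled by one uniform argument.
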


\begin{proof} First for $x=0$ and $z=4e_{n}$ we have
\begin{align*}
 p_{n}( t, P_{n}0&, P_{n}z )=\\
& \prod_{j=1}^{n}\left( \frac{1}{2 \pi} \frac{2a_{j}}{1-e^{-2a_{j}t}}\right)^{1/2} \exp\left(-\frac{16a_{n}}{ 1-e^{-2a_{n}t}}\right).
\end{align*}

\noindent\emph{Step 1.}  Let $t$ be in the interval $0<t\leqslant \frac{1}{2a_{n}}<1$. Then
\[
\prod_{j=1}^{n}\left( \frac{1}{2 \pi} \frac{2a_{j}}{1-e^{-2a_{j}t}}\right)^{1/2} \leqslant \left( \frac{1}{t\pi} \right)^{n/2},
\]
where we used the fact that $a_{n}$ is an increasing sequence.
For  any $t$ we have
\[
\frac{16a_{n}}{ 1-e^{-2a_{n}t}}\geqslant \frac{8}{t};
\]
therefore for $0<t<\frac{1}{2a_{n}}$,
\[
p_{n}\left( t, 0, 4e_{n} \right) \leqslant e^{-8/t} \left( \frac{1}{t\pi} \right)^{n/2}.
\]

The right hand side has its maximum at $\frac{16}{n}$ which is larger than $\frac{1}{2a_{n}}$ for all large enough $n$ by our assumptions on $Q$ and $A$. Thus we can estimate the right hand side by its value at the endpoint $\frac{1}{2a_{n}}$:
\[
p_{n}\left( t, 0, 4e_{n} \right) \leqslant e^{-16a_{n}} \left( \frac{2a_{n}}{\pi} \right)^{n/2}, \qquad 0<t\leqslant \frac{1}{2a_{n}}.
\]
\noindent\emph{Step 2.}  Let $t$ be in the interval $\frac{1}{2a_{n}}<t\leqslant 1$. Denote by $n_{0}$ the index for which $\frac{1}{2a_{n_{0}+1}}<t\leqslant \frac{1}{2a_{n_{0}}}$.
As before
\begin{align*}
 \prod_{j=1}^{n}&\left( \frac{1}{2 \pi} \frac{2a_{j}}{1-e^{-2a_{j}t}}\right)^{1/2} \exp\left(-\frac{16a_{n}}{ 1-e^{-2a_{n}t}}\right)\\
&\leqslant
 \left( \frac{1}{t\pi} \right)^{n_{0}/2}\prod_{j=n_{0}+1}^{n}\left( \frac{1}{2 \pi} \frac{2a_{j}}{1-e^{-2a_{j}t}}\right)^{1/2}\exp\left(-\frac{16a_{n}}{ 1-e^{-2a_{n}t}}\right)\\
&\leqslant
 e^{-16a_{n}} \left( \frac{1}{t\pi} \right)^{n_{0}/2}\prod_{j=n_{0}+1}^{n}\left( \frac{1}{2 \pi} \frac{2a_{j}}{1-e^{-2a_{j}t}}\right)^{1/2}.
\end{align*}
There is  constant $c$ independent of $n$ such that
\[
\frac{1}{2 \pi} \frac{2a_{j}}{1-e^{-2a_{j}t}}\leqslant c a_{j}\leqslant c a_{n}, \qquad j=n_{0}+1, ..., n.
\]
Since  $1/t<2a_{n}$,  there is a constant $c$ such that
\[
\prod_{j=1}^{n}\left( \frac{1}{2 \pi} \frac{2a_{j}}{1-e^{-2a_{j}t}}\right)^{1/2} \exp\left(-\frac{16a_{n}}{ 1-e^{-2a_{n}t}}\right) \leqslant c^{n} a_{n}^{n/2} e^{-16a_{n}}.
\]

\noindent\emph{Step 3.}  For $t>1$ the transition density of the killed process can be estimated by
\[
\prod_{j=2}^{n}\left( \frac{1}{2 \pi} \frac{2a_{j}}{1-e^{-2a_{j}t}}\right)^{1/2} \exp\left(-\frac{16a_{n}}{ 1-e^{-2a_{n}t}}\right) e^{-\beta t}
\]
for some $\beta >0$, using Lemma \ref{rb-s3-L1}(1).
Similarly to Step 2,
\[
\tilde{p}\left( t, 0, 4e_{n} \right) \leqslant c_1^{n-1} a_{n}^{(n-1)/2} e^{-16a_{n}}e^{-\beta t}
\]
for some constant $c_1$. Thus we have that there is a constant $c>0$ such that
\[
\tilde{p}\left( t, 0, 4e_{n} \right) \leqslant
\left\{
\begin{array}{cc}
  c^{n} a_{n}^{n/2} e^{-16a_{n}}, & 0< t < 1,  \\
  c^{n} a_{n}^{n/2} e^{-16a_{n}}e^{-\beta t}, & 1<t.
\end{array}
\right.
\]
Integrating over $t$ from 0 to $\infty$ yields  the result.
\end{proof}

We now obtain the lower bound for the Green function.

\begin{prop}\label{rb-s3-P23} Let $x=e_n$.
There are constants $M>0$, $c>0$ and $\varepsilon >0$ such that
\[
h_{n}\left( x, z \right)\geqslant M c^{n} e^{-16a_{n}} a_{n}^{n/2} \frac{e^{\varepsilon a_{n}}}{a_{n}}.
\]
\end{prop}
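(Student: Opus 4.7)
The plan is to exploit the product structure of the Ornstein--Uhlenbeck density and choose a time $t_n$ at which the $n$-th coordinate of the density is large enough to beat the $e^{-16 a_n}$ barrier from Proposition \ref{rb-s3-P22}. The key observation is that, starting from $x_n = 1$ (rather than $0$) and running for time $t$ with $e^{-a_n t} = 1/4$, the mean $e^{-a_n t} x_n = 1/4$ is close enough to $z_n = 4$ that the Gaussian exponent for that coordinate becomes $-15 a_n$ instead of $-16 a_n$. A short calculus exercise shows that $u \mapsto (4-u)^2/(1-u^2)$ on $(0,1)$ is in fact minimized exactly at $u = 1/4$ with value $15$, so this choice of time is optimal.

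Since $Q = I$ and $A$ is diagonal, the coordinates $Z^j$ are independent one-dimensional Ornstein--Uhlenbeck processes, and the killing depends only on $Z^1$. Consequently the killed density factorizes as
\[
\tilde p_n(t, e_n, 4 e_n) \;=\; \tilde q_1(t, 0, 0) \,\Bigl(\prod_{j=2}^{n-1} q_j(t, 0, 0)\Bigr)\, q_n(t, 1, 4),
\]
where $q_j$ is the one-dimensional OU density with parameter $a_j$ and $\tilde q_1$ is its analogue killed on first exiting $[-6, 6]$. Setting $c_0 = \log 4$ and $t_n = c_0/a_n$, a direct computation gives $q_n(t_n, 1, 4) = \sqrt{16 a_n/(15\pi)}\, e^{-15 a_n}$.

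For the Gaussian factors $q_j(t_n, 0, 0) = \sqrt{a_j/(\pi(1 - e^{-2 a_j t_n}))}$ with $2 \leq j \leq n-1$, I would split according to whether $a_j t_n \leq 1$ or $a_j t_n > 1$: the first case uses $1 - e^{-x} \leq x$ to give $q_j(t_n, 0, 0) \geq 1/\sqrt{2\pi t_n} = \sqrt{a_n/(2\pi c_0)}$, while the second case uses $1 - e^{-x} \leq 1$ together with $a_j > a_n/c_0$ to give $q_j(t_n, 0, 0) \geq \sqrt{a_n/(c_0 \pi)}$. Either way $q_j(t_n, 0, 0) \geq c \sqrt{a_n}$ uniformly in $j$. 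For the killed factor, since $t_n \to 0$ as $n \to \infty$, Lemma \ref{rb-s3-L1}(2) yields $\tilde q_1(t_n, 0, 0) \geq \tfrac{1}{2} q_1(t_n, 0, 0) \geq c\sqrt{a_n}$ for all sufficiently large $n$. Multiplying the three pieces gives
\[
\tilde p_n(t_n, e_n, 4 e_n) \;\geq\; c^n a_n^{n/2}\, e^{-15 a_n}.
\]

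Finally, to recover the integral defining $h_n$, I need these bounds to persist over an interval of $t$. By continuity of $C(u) = (4-u)^2/(1-u^2)$ at its minimum $u = 1/4$, there exist $\delta_0 > 0$ and $\eta_0 \in (0,1)$ (both independent of $n$) such that $C(e^{-s}) \leq 15 + \eta_0$ for $s \in [c_0, c_0 + \delta_0]$; equivalently the $n$-th-coordinate exponent is at most $-(15 + \eta_0) a_n$ for $t \in [t_n, t_n + \delta_0/a_n]$, while the other factor bounds remain valid on that same interval. Integrating yields an extra factor $\delta_0/a_n$, giving the claimed inequality with $\varepsilon = 1 - \eta_0 > 0$. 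The main delicate point is the optimization identifying $u = 1/4$: without the sharp value $C = 15 < 16$, the lower bound would not strictly beat Proposition \ref{rb-s3-P22}, and the Harnack counterexample would not go through.
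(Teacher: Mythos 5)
Your proof is correct and follows essentially the same route as the paper's: factor the killed density into one-dimensional pieces, identify a time window of length $O(1/a_n)$ in which the exponent in the $n$-th coordinate is strictly better than $-16 a_n$, bound the remaining prefactors below by $c\sqrt{a_n}$ each, control the killed first coordinate via Lemma~\ref{rb-s3-L1}(2), and integrate in $t$. The paper simply works with $t\in[1/a_n,2/a_n]$ (where $v=e^{-a_n t}\in[e^{-2},e^{-1}]$) and uses $(4-v)^2/(1-v^2)<16$ there, rather than pinpointing the minimizer $v=1/4$; your sharper optimization gives $\varepsilon$ arbitrarily close to $1$, but any $\varepsilon>0$ suffices for the counterexample, so this is a refinement of constants, not a different argument.
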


\begin{proof} For $x=e_{n}$ and $z=4e_{n}$ we have
 $$p_{n}( t, P_{n}x, P_{n}z )
=
 \prod_{j=1}^{n}\left( \frac{1}{2 \pi} \frac{2a_{j}}{1-e^{-2a_{j}t}}\right)^{1/2} \exp\left(-\frac{a_{n}\left( 4-e^{-a_{n}t}\right)^{2}}{\left( 1-e^{-2a_{n}t}\right)}\right).$$
Observe that
\[
\prod_{j=1}^{n}\left( \frac{1}{2 \pi} \frac{2a_{j}}{1-e^{-2a_{j}t}}\right)^{1/2} \geqslant \left( \frac{1}{2\pi t}\right)^{n/2}.
\]
Consider $t$ in the interval $[ 1/a_{n}, 2/a_{n}]$. When $n$ is large, $2/a_n\leqslant 1$.
Set $v=e^{-a_{n}t}$, so that $v \in [ 1/e^{2}, 1/e ]$ when $t \in [ 1/a_{n}, 2/a_{n}]$. Note that
\[
16-\frac{\left( 4-v \right)^{2}}{1-v^{2}}> 0
\]
for $v \in [ 0, 8/17 ] \supset [ 1/e^{2}, 1/e ]$, so there is a constant $\varepsilon>0$ such that
\[
16-\frac{\left( 4-v \right)^{2}}{1-v^{2}}>\varepsilon, \qquad v \in [ 1/e^{2}, 1/e ].
\]
Thus
\[
 \exp\left(-\frac{a_{n}\left( 4-e^{-a_{n}t}\right)^{2}}{\left( 1-e^{-2a_{n}t}\right)}\right) \geqslant e^{-16a_{n}+\varepsilon a_{n}}.
\]
We now apply Lemma \ref{rb-s3-L1}(2) and obtain
\begin{align*}
 h_{n}\left( x, z \right)&\geqslant \int_{1/a_{n}}^{{2}/{a_{n}}} \tilde p_{n}\left( t, P_{n}x, P_{n}z \right) \,dt \\
&\geqslant
 e^{-16a_{n}+\varepsilon a_{n}} c_2^{n}\int_{{1}/{a_{n}}}^{{2}/{a_{n}}} t^{-n/2}\, dt\\
&=
 e^{-16a_{n}+\varepsilon a_{n}} c_3^{n} a_{n}^{n/2-1} \left(\frac{1-2^{-\frac{n}{2}+1}}{\frac{n}{2}-1} \right).
\end{align*}
Thus we have
\[
h_{n}\left( x, z \right)\geqslant M c^{n} e^{-16a_{n}} a_{n}^{n/2} \frac{e^{\varepsilon a_{n}}}{a_{n}}.
\]
\end{proof}

\begin{thm}\label{rb-s3-T21}
Let $K>0$. There exist functions $h_n$ harmonic and non-negative on $B(0,4)$
and points $x_n$ in $B(0,2)$ such that
$$\frac{h_n(x_n)}{h_n(0)}\ge K$$
for all $n$ sufficiently large.
Thus the  Harnack inequality does not hold for the Ornstein-Uhlenbeck process.
\end{thm}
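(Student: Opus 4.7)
The plan is to take $h_n(x) := h_n(x,z_n)$ with $z_n = 4e_n$, exactly as constructed in the discussion preceding Proposition \ref{rb-s3-P22}, and let $x_n = e_n$. Since $|e_n| = 1$, the point $x_n$ lies in $B(0,2)$, as required. The ratio estimate will then follow by directly combining the upper bound of Proposition \ref{rb-s3-P22} with the lower bound of Proposition \ref{rb-s3-P23}.

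First I would verify harmonicity of $h_n$ on $B(0,4)$. Because $A$ is diagonal in the basis $\{e_n\}$, the coordinates of the infinite-dimensional Ornstein-Uhlenbeck process are independent one-dimensional Ornstein-Uhlenbeck processes, so $P_n Z_t$ is itself an $n$-dimensional Ornstein-Uhlenbeck process. The function $h_n$ depends on $x$ only through $P_n x$, and as a function on $\mathbb{R}^n$ it is (up to the killing factor) the Green function for the $n$-dimensional process with pole at $P_n z_n = 4e_n$ and killed when the first coordinate leaves $[-6,6]$. For any $x \in B(0,4)$ one has $|P_n x| \leqslant |x| < 4$, so $P_n x \neq 4e_n$ and $|(P_n x)_1| < 4 < 6$; hence the pole and the killing region are both outside the relevant set, and $h_n$ is harmonic on $B(0,4)$.

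Next, with $h_n$ non-negative by construction, I would combine the two propositions. Proposition \ref{rb-s3-P22} gives
\[
h_n(0) \leqslant K\, c^n a_n^{n/2} e^{-16 a_n},
\]
while Proposition \ref{rb-s3-P23} (with $x = x_n = e_n$) gives
\[
h_n(x_n) \geqslant M\, c^n a_n^{n/2} e^{-16 a_n} \frac{e^{\varepsilon a_n}}{a_n}
\]
for some $\varepsilon > 0$, where the constants $c$ may be taken equal (replacing both by the larger of the two, adjusting $K$ and $M$). Dividing,
\[
\frac{h_n(x_n)}{h_n(0)} \geqslant \frac{M}{K}\,\frac{e^{\varepsilon a_n}}{a_n}.
\]
Under the standing assumption of the subsection, $a_n$ is an increasing sequence with $A^{-1}$ trace class, so $a_n \to \infty$, whence the right-hand side tends to infinity and eventually exceeds any prescribed $K > 0$. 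This contradicts any Harnack inequality comparing values on $B(0,2) \subset B(0,4)$.

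I do not expect a serious obstacle: the hard work has already been done in the delicate heat-kernel estimates of Propositions \ref{rb-s3-P22} and \ref{rb-s3-P23}. The only subtlety is the harmonicity verification, which reduces (by the coordinate-wise independence produced by the diagonal assumption on $A$) to the classical statement that the Green function of a finite-dimensional killed diffusion is harmonic away from its pole and off the killing set.
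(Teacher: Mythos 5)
Your approach is the same as the paper's: take $h_n(\cdot)=h_n(\cdot,z_n)$ with $z_n=4e_n$, $x_n=e_n$, check $x_n\in B(0,2)$, argue harmonicity on $B(0,4)$ via the cylinder/projection structure (exactly parallel to Proposition~\ref{mg-s2-P2}), and divide the bound of Proposition~\ref{rb-s3-P23} by that of Proposition~\ref{rb-s3-P22}. The paper itself does nothing more.

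There is, however, one small but genuine slip in how you dispose of the two potentially different constants $c$. You cannot ``replace both by the larger of the two, adjusting $K$ and $M$'': if the lower bound has $c_2^n$ and you wish to replace it by $c_1^n$ with $c_1>c_2$, the needed adjustment to $M$ is $M(c_2/c_1)^n$, which is $n$-dependent and tends to $0$, so the replacement is not legitimate and your displayed inequality $h_n(x_n)/h_n(0)\geqslant \tfrac{M}{K}e^{\varepsilon a_n}/a_n$ drops a factor $(c_2/c_1)^n$. Your stated justification for the conclusion --- ``$a_n\to\infty$'' --- is then insufficient, because that alone does not force $e^{\varepsilon a_n}(c_2/c_1)^n\to\infty$. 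The correct fix is to invoke the actual standing hypothesis of this subsection, namely Assumption~\ref{OUas} with $p=1$, i.e.\ $a_n/n\to\infty$; then $e^{\varepsilon a_n}$ dominates every geometric factor $c^n$, so the (uncorrected) ratio $\tfrac{M}{K}(c_2/c_1)^n e^{\varepsilon a_n}/a_n$ still tends to infinity. With that one-line repair your argument is complete and matches the paper's.
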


\begin{proof}
The embedding of the finite dimensional functions $h_n$ into the Hilbert
space framework is done similarly to the proof of Theorem \ref{rb-s2-T1},
but is simpler here as there is no Banach space $W$ to worry about.
We leave the details to the reader.
The theorem then follows by combining Propositions \ref{rb-s3-P22} and \ref{rb-s3-P23}.
\end{proof}

\subsection{Coupling}

It is commonly thought that coupling and the Harnack inequality have
close connections.
Therefore it is interesting that there are infinite-dimensional
Ornstein-Uhlenbeck processes that couple even though they do not
satisfy a Harnack inequality.

We now consider the infinite-dimensional Ornstein-Uhlenbeck defined
as in the previous subsection, but with $a_n=n^p$ and $p=6$.
We have the following theorem.
Given a process $X$, let $\tau_X(r)=\inf\{t: |X_t|\ge r\}$.

\begin{thm}\label{rb-s3-T31} Let $x_0,y_0\in B(0,1)$.
We can construct two infinite-di\-men\-sion\-al
Ornstein-Uhlenbeck processes $X_t$ and $Y_t$ such that $X_0=x_0$ a.s.,
$Y_0=y_0$ a.s., and if $\mathbb{P}^{x_0,y_0}$ is the joint law of the
pair $(X,Y)$, then $$\mathbb{P}^{x_0,y_0}(T_C<\tau_X(2)\land \tau_Y(2))>0,$$
where $T_C=\inf\{t: X_t=Y_t\}$.
\end{thm}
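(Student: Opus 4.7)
The construction is coordinate-by-coordinate: use independent one-dimensional Brownian motions $\{B^n\}_{n\ge 1}$ to drive the pair $(X,Y)$, and in each coordinate $n$ apply a reflection (mirror) coupling for $t<T_n:=\inf\{t:X^n_t=Y^n_t\}$ and a synchronous coupling for $t\ge T_n$. Explicitly, for $t<T_n$ we set $dX^n_t=-a_nX^n_t\,dt+dB^n_t$ and $dY^n_t=-a_nY^n_t\,dt-dB^n_t$, with $dY^n_t=dX^n_t$ for $t\ge T_n$. Each marginal has the correct one-dimensional Ornstein-Uhlenbeck law, and because the $B^n$ are independent across coordinates the $T_n$ are mutually independent; the joint meeting time is $T_C=\sup_n T_n$.

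On $\{t<T_n\}$ the difference $D^n_t:=X^n_t-Y^n_t$ solves $dD^n_t=-a_nD^n_t\,dt+2\,dB^n_t$, and the Dambis--Dubins--Schwarz time change $T(t):=(e^{2a_nt}-1)/(2a_n)$ represents it as $D^n_t=e^{-a_nt}(D^n_0+2M^n_{T(t)})$ for a standard Brownian motion $M^n$. Hence $T_n$ is the first time $M^n$ reaches $-D^n_0/2$, and the reflection principle gives
\begin{equation*}
\mathbb{P}(T_n>t_0)\le\frac{|D^n_0|}{\sqrt{2\pi\,T(t_0)}},
\end{equation*}
which is bounded above by $|D^n_0|\sqrt{2a_n/\pi}\,e^{-a_nt_0}$ once $a_nt_0\ge 1$. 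Combined with $\sum_n|D^n_0|^2\le|x_0-y_0|^2<\infty$ and $a_n=n^6$, the Cauchy--Schwarz inequality yields $\sum_n\mathbb{P}(T_n>t_0)<\infty$ for every $t_0>0$.

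To produce a positive-probability event on which the coupling occurs before exit, I choose radii $\{r_n\}$ with $r_n>|x^n_0|\vee|y^n_0|$ and $\sum_n r_n^2<4$; this is feasible because $|x_0|,|y_0|<1$ gives $|x_0|^2+|y_0|^2<2$ and so leaves strict slack in the budget. On the coordinate-wise event
\begin{equation*}
A_n:=\{T_n\le t_0\}\cap\{\sup_{s\le t_0}|X^n_s|\le r_n\}\cap\{\sup_{s\le t_0}|Y^n_s|\le r_n\}
\end{equation*}
one has $\sup_{s\le t_0}|X_s|^2\le\sum_n\sup_{s\le t_0}(X^n_s)^2\le\sum_n r_n^2<4$ and likewise for $Y$, so $\tau_X(2)\wedge\tau_Y(2)>t_0\ge T_C$. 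Because $A_n$ is a functional of $B^n$ alone, the events $\{A_n\}$ are mutually independent, whence
\begin{equation*}
\mathbb{P}\bigl(T_C<\tau_X(2)\wedge\tau_Y(2)\bigr)\ge\prod_n\mathbb{P}(A_n).
\end{equation*}

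The main obstacle is verifying strict positivity of this infinite product. Each $\mathbb{P}(A_n)>0$ because $T_n<\infty$ almost surely and the sup-ball event has positive probability whenever $r_n>|x^n_0|\vee|y^n_0|$, so it suffices to establish $\sum_n\mathbb{P}(A_n^c)<\infty$. By a union bound this reduces to the already-noted summability of $\sum_n\mathbb{P}(T_n>t_0)$ together with $\sum_n\mathbb{P}(\sup_{s\le t_0}|X^n_s|>r_n)<\infty$ and its analogue for $Y^n$. The latter estimates are exactly the ingredients used in the proof of Lemma \ref{rb-s3-L31} via the Graversen--Peskir bound, whose application requires $a_n\sim n^p$ with $p>3$; the present choice $p=6$ provides ample margin. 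Taking $r_n-|x^n_0|$ and $r_n-|y^n_0|$ of order $n^{-1}$ keeps the series convergent, and a final selection of $t_0$ small enough to control the absolute constants implicit in that bound ensures each factor $\mathbb{P}(A_n)$ is positive and the product is strictly positive, completing the proof.
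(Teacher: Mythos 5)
Your proof is correct, but it takes a genuinely different route from the paper's. The paper drives $X$ and $Y$ by \emph{independent} Brownian motions (and only merges coordinate $j$ after the first meeting time $T^j_C$); it then proves $\sum_j\mathbb{P}(T^j_C>t_0)<\infty$ by a two-step Markov-property argument (show that at time $t_0/2$ both coordinates are in $[-a_j^{-1/4},a_j^{-1/4}]$ with summable exception, then compare the difference process to a free Brownian motion). It next splits the coordinates into a finite block $j\le j_0$ and a tail, uses Lemma~\ref{rb-s3-L31} and the tail summability to handle the tail, and proves the finite block couples before exit via Girsanov plus the support theorem. You instead use a \emph{reflection (mirror) coupling}, which makes the difference process $D^n$ explicitly an OU driven by $2\,dB^n$; the time change $T(t)=(e^{2a_nt}-1)/(2a_n)$ plus the reflection principle then gives a closed-form, sharper bound $\mathbb{P}(T_n>t_0)\lesssim |D_0^n|\sqrt{a_n}\,e^{-a_nt_0}$. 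This removes the need for the paper's two-step argument. You also avoid the finite/tail split entirely by selecting a radius budget $\{r_n\}$ with $\sum r_n^2<4$ and arguing $\prod_n\mathbb{P}(A_n)>0$ from $\sum_n\mathbb{P}(A_n^c)<\infty$; this is tidier. What each buys: the paper's independent coupling is more robust (it requires no pathwise comparison for the marginals), while your mirror coupling gives a quantitative, explicit coalescence estimate and a cleaner global organization.

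Two small places deserve a word in a polished write-up. First, the bound $\sum_n\mathbb{P}^{x_0^n}(\sup_{s\le t_0}|X^n_s|>r_n)<\infty$ should be derived by writing $X_s^n=x_0^ne^{-a_ns}+Y_s^n$ with $Y^n$ the OU started at $0$, so that $\{\sup|X^n_s|>r_n\}\subset\{\sup|Y^n_s|>r_n-|x_0^n|\}$ and the Graversen--Peskir bound applies with $\delta_n=r_n-|x_0^n|$; you reference the lemma's ``ingredients'' but this short reduction is what actually justifies summability with $\delta_n\sim n^{-1}$ and $a_n=n^6$. Second, your statement that $\mathbb{P}(A_n)>0$ ``because $T_n<\infty$ a.s.\ and the sup-ball event has positive probability'' glosses over the fact that one needs the \emph{joint} event $\{T_n\le t_0\}\cap\{\sup|X^n|\le r_n\}\cap\{\sup|Y^n|\le r_n\}$ to have positive probability; for this one needs the support theorem for the (degenerate, rank-one) two-dimensional diffusion $(X^n,Y^n)$, or equivalently for the single driving Brownian motion $M^n$, exactly as the paper invokes for the finite block. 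Since only finitely many $n$ have $\mathbb{P}(A_n^c)\ge 1/2$ (by summability), this is needed only for finitely many indices and is easily supplied, but it should be stated.
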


\begin{proof}
Let $W^X_j(t), W^Y_j(t)$, $j=1, 2, \ldots$, all be independent
one-dimensional Brownian motions. Let
$$dX_t^j=dW^X_j(t)-a_j X_t^j\, dt, \qquad X_0^j=x_0^j,$$
and the same for $Y_t^j$, where we replace $dW^X_j$ by $dW^Y_j$ and
$x_0$ by $y_0$. Let
$T^j_C=\inf\{t: X^j(t)=Y^j(t)\}$.
We define
$${\overline Y}^j(t)=\begin{cases} Y^j(t),& t< T^j_C;\\
X^j(t), & t\ge T^j_C.\end{cases}$$

Let $\mathbb{P}^x$ be the law of $X$ when starting at $x$ and similarly for $\mathbb{P}^y$.
Define $\mathbb{P}^{x^j}$ to be the law of $X^j(t)$ started at $x^j$ and so on.
Use Lemma \ref{rb-s3-L31} to choose $t_0$ small such that
$$\sup_{x,y\in B(0,1)}\mathbb{P}^{x,y}(\tau_X(5/4)\land \tau_Y(5/4)\leqslant t_0)\leqslant 1/4.$$
Our first step is to show
\begin{equation}\label{coup-E1}
\sum_{j=1}^\infty \mathbb{P}^{x^j,y^j}(T_C^j>t_0)<\infty.
\end{equation}

The law of $X^j_{t_0/2}$ under $\mathbb{P}^{x^j}$ is that of a normal random variable
with mean $e^{-a_j t_0/2}x^j$ and variance $(1-e^{-a_j t_0/2})/2a_j$.
If $A^X_j$ is the event where $X^j(t_0/2)$ is not in $[-a_j^{-1/4},
a_j^{-1/4}]$, then standard estimates using the Gaussian density
show that  $\sum_j \mathbb{P}^{x_j}(A^X_j)$ is summable.
The same holds if we replace $X$ by $Y$.

Suppose $|x'_j|, |y'_j|\leqslant a_j^{-1/4}$. Let
\begin{equation}\label{rb-sec3.3-E22}
Z^j(t)=(x'_j-y'_j)+(W^X_j(t)-W^Y_j(t))-a_j \int_0^t Z_j(s)\, ds.
\end{equation}
Now $Z^j$ is again a one-dimensional Ornstein-Uhlenbeck process, but with the Brownian
motion replaced by $\sqrt 2$ times a Brownian motion. Using
\eqref{rb-sec3.3-E22} the probability
that $Z_t$ does not hit  0 before time $t_0/2$ is less than or equal to the probability
that $\sqrt 2$ times a Brownian motion does not hit 0 before time $t_0/2$.
This latter probability is less than
or equal to
$$c|x'_j-y'_j|/\sqrt {t_0/2}\leqslant 2ca_j^{-1/4}/\sqrt{t_0/2},$$
which is summable in $j$.

Let $B_j$ be the event $(T^j_C>t_0/2)$. We can therefore conclude that
if $|x'_j|, |y'_j|\leqslant a_j^{-1/4}$, then $\mathbb{P}^{x_j',y_j'}(B_j)$
is summable in $j$.

Now use the Markov property at time $t_0/2$ on the event $(A^j_X)^c\cap (A^j_Y)^c$ to
obtain
\begin{align*}
\mathbb{P}^{x_j,y_j}(T^j_C&>t_0,(A^j_X)^c\cap (A^j_Y)^c)\\
&=\mathbb{E}^{x_j,y_j}\left[\mathbb{P}^{X_j(t_0/2),Y_j(t_0/2)}(T_C^j>t_0/2)
;(A_X^j)^c\cap (A_Y^j)^c\right]\\
&\leqslant \Big(\sup_{|x_j'|,|y_j'|\leqslant a_j^{-1/4}}
\mathbb{P}^{x'_j,y'_j}(T_C^j>t_0/2)\Big) \, \mathbb{P}^{x_j,y_j}((A_X^j)^c\cap (A_Y^j)^c).
\end{align*}
Therefore $$\mathbb{P}^{x_j,y_j}(T^j_C>t_0, (A^j_X)^c\cap (A^j_Y)^c)$$
is summable in $j$.
Since we already know that $\mathbb{P}^{x_j,y_j}(A_X^j)$ and
$\mathbb{P}^{x_j,y_j}(A_Y^j)$ are summable in $j$,
we conclude that \eqref{coup-E1} holds.

Now choose $j_0$ such that
$$\sum_{j=j_0+1}^\infty \mathbb{P}^{x^j,y^j}(T^j_C\ge t_0)<1/4.$$
Choose $\varepsilon$ such that $(1+\varepsilon)^{j_0}\leqslant 5/4$. We will show
that there exists a constant $c_1$ such that for each $j\leqslant j_0$
we have
\begin{equation}\label{coup-E2}
\mathbb{P}^{x^j,y^j} (T^j_C<\tau_X(1+\varepsilon)\land \tau_Y(1+\varepsilon))\ge c_1.
\end{equation}
We know that with probability at least
$1/2$, for each $j>j_0$
each pair $(X^j(t), \overline{Y}^j(t))$ couples before $(X,Y)$ exits
$B(0,5/4)$.
Once we have \eqref{coup-E2}, we know that with probability at least
$c_1$, the pair $(X^j(t), \overline{Y}^j(t))$ couples before exiting $[-1-\varepsilon,1+\varepsilon]$
for $j\leqslant j_0$. Hence, using independence,  with probability at least $c_1^{j_0}$ we have
that for all $j\leqslant j_0$, each pair $(X^j(t), \overline{Y}^j(t))$ couples  before either
$X^j(t)$ or $Y^j(t)$ exits the interval
 $[-1-\varepsilon,1+\varepsilon]$.
 Using the independence again, we have coupling with probability
at least $c_1^{j_0}/2$  of $X$ and $Y$ before either exits the ball of radius $\sqrt 2(5/4)<2$.

To show \eqref{coup-E2}, on the interval $[-1-\varepsilon, 1+\varepsilon]$, the
drift term of the Ornstein-Uhlenbeck  process is bounded, so by using the Girsanov
theorem, it suffices to show with positive probability $W^X_j$ hits
$W^Y_j$ before either exits $[-1-\varepsilon,1+\varepsilon]$. The pair
$(W^X_j(t), W^Y_j(t))$ is a two-dimensional Brownian motion started inside
the square $[-1,1]^2$ and we want to show that it hits the diagonal
$\{y=x\}$ before exiting the square $[-1-\varepsilon,1+\varepsilon]^2$ with positive
probability. This follows from the support theorem for Brownian motion.
See, e.g., \cite[Theorem I.6.6]{BassBook1995}.
\end{proof}

\section{Operators in H\"ormander form}\label{S:hor}

We let $C_b(H)$ denote the set of bounded continuous functions on $H$
with the supremum norm and $C^n_b(H)$ the space of $n$ times continuously
Fr\'echet differentiable functions with all derivatives up to order
$n$ being bounded. $C^{0,1}_b(H)$ will be  the space of all Lipschitz
continuous functions with
$$\|f\|_{0,1}:=\sup_x |f(x)|+\sup_{x\ne y}\frac{|f(x)-f(y)|}{|x-y|}.$$
Finally, $C_b^{1,1}(H)$ will be  the space of Fr\'echet differentiable
functions $f$ wih continuous and bounded derivatives such that $Df$
is Lipschitz continuous; we use the norm
$$\|f\|_{1,1}=\|f\|_{0,1}+\sup_{x\ne y} \frac{|Df(x)-Df(y)|_{H^*}}{|x-y|}.$$

Suppose $H$ is a separable Hilbert space, and $\{ e_{n} \}_{n=1}^{\infty}$ is an orthonormal basis in $H$.
We set
$$(\partial_jf)(x):= (D_{e_j}f)(x).$$

\subsection{Stochastic differential equation}

Let $m\ge 1$ and suppose
$A^1, \ldots, \allowbreak A^m$ are bounded maps from $H$
to $H$. Let $A:=(A^1, \ldots A^m)$.

We assume that
\begin{equation}
a_{i}^{k}\left( x \right):=\langle A^{k}\left( x \right), e_{i}\rangle >0  \text{ for any } x \in H,
\end{equation}
and that we have $a_{i} \in C_{b}^{1, 1}\left( H \right)$ with
\begin{equation}
\Vert A^{k} \Vert_{1, 1}^{2}:=\sum_{i=1}^{\infty} \Vert a_{i}^{k} \Vert_{1, 1}^{2}< \infty.
\end{equation}

 For any $f \in C_{b}^{1}\left( H \right)$ we define
\begin{align*}
& \left( \nabla_{A^{k}}f\right)\left( x \right):=\sum_{i=1}^{\infty} a_{i}^{k}\left( x \right) \left(\partial_{i}f\right)\left( x \right),
\\
& \left( \nabla_{A}f\right)\left( x \right):=\left(\left( \nabla_{A^{1}}f\right)\left( x \right), ..., \left( \nabla_{A^{m}}f\right)\left( x \right)\right).
\end{align*}
Note that
\begin{align*}
\vert \left( \nabla_{A^{k}}f\right)\left( x \right) \vert^{2}& \leqslant \left( \sum_{i=1}^{\infty} \vert a_{i}^{k}\left( x \right)\vert^{2} \right)\left( \sum_{i=1}^{\infty} \vert \left(\partial_{i}f\right)\left( x \right)\vert^{2}  \right)
\\
&\leqslant \Vert A^{k} \vert_{1, 1}^{2} \vert \left(Df\right)\left( x \right)\vert^{2},
\end{align*}
so $\nabla_{A^{k}}f$ and $\nabla_{A}f$ are well-defined for $f \in C_{b}^{1}\left( H \right)$.

Fix a probability space $\left( \Omega, \mathcal{F}, \mathbb{P} \right)$ with a filtration $\mathcal{F}_{t}$, $t \geqslant 0$, satisfying the usual conditions, that is, $\mathcal{F}_{0}$ contains all null sets in $\mathcal{F}$, and $\mathcal{F}_{t}=\mathcal{F}_{t+}=\bigcap_{s>t}\mathcal{F}_{s}$ for all $t \in [0, T]$. Suppose $W_{t}=\left( W_{t}^{1}, ..., W_{t}^{m} \right)$ is a Wiener process on $H ^m$ with covariance operator $Q=\left( Q^{1}, ..., Q^{m} \right)$. We assume that each $Q^{k}, k=1, ..., m$ is a non-negative trace-class operator on $H$ such that
\[
Q^{k}e_{i}=\lambda_{i}^{k}e_{i}, \text{ with } \lambda_{i}^{k}>0 \text{ and } \sum_{i=1}^{\infty}\lambda_{i}^{k}=2,\qquad   k=1, ..., m.
\]

We consider a stochastic differential equation such that the infinitesimal generator of the solution is $L=\sum_{k=1}^{m}\left(\nabla_{A^{k}}\right)^{2}$.

Define $B\left( x \right):=\left( B^{1}\left( x \right), ..., B^{m}\left( x \right)\right), x \in H$ as a linear operator from $H$ to $H ^m$  by
\[
\langle B^{k}\left( x \right)h, e_{i} \rangle :=a_{i}^{k}\left( x \right), \text{ for any } h \in H, \qquad k=1, ..., m,
\]
and $F: H \to H^m$ by
\[
\langle F^{k}\left( x \right), e_{i}\rangle:=\sum_{j=1}^{\infty} a_{j}^{k}\left( x \right)\partial_{j}a_{i}^{k}\left( x \right), \qquad k=1, ..., m.
\]
We can also re-write $B$ and $F$ as
\begin{align*}
& B\left( x \right)\left( h_{1}, ..., h_{m} \right)=A\left( x \right), \text{ for any } \left( h_{1}, ..., h_{m} \right) \in H^m,
\\
& F\left( x\right)=\left( \sum_{i=1}^{\infty}\nabla_{A^{1}}a_{i}^{1}\left( x \right) e_{i}, ..., \sum_{i=1}^{\infty}\nabla_{A^{m}}a_{i}^{m}\left( x \right) e_{i} \right).
\end{align*}

\begin{thm}\label{t.8.2}
\begin{enumerate}

\item Suppose $X_{0}$ is an $H^m$-valued random variable. Then the stochastic differential equation
\[
X_{t}=X_{0}+\int_{0}^{t} B\left( X_{s}\right) \,dW_{s}^{T} +\int_{0}^{t} F\left( X_{s} \right) \,ds,
\]
has a unique solution (up to a.s.\ equivalence) among the processes satisfying
\[
\mathbb{P}\left( \int_{0}^{T} \vert X_{t} \vert_{H^m}^{2} \,dt < \infty\right)=1.
\]
\item If in addition $X_{0}\in L^{2}\left( \Omega, \mathcal{F}_{0}, \mathbb{P}\right)$, then there is a constant $C_{T}>0$ such that
\[
\mathbb{E} \vert X_{t} \vert^{2}\leqslant C_{T}\mathbb{E} \vert X_{0} \vert^{2}.
\]

\item Suppose $f \in C_{b}^{2}\left( H \right)$. Then $v\left( t, x \right):=\mathbb{E}\left(f\left( X_{t}^{x} \right)\right)=P_{t}f\left( x \right)$ is in $C_{b}^{1, 2}\left( H  \right)$ and is the unique solution to the following parabolic equation
\begin{align*}
& \partial_{t}v\left( t, x \right)=Lv ,\qquad t>0, x \in H^m,
\\
& v\left( 0, x \right)=f\left( x \right),
\end{align*}
where $L$ is the operator
\begin{align*}
 \left(Lf\right)\left( x \right)&:=\sum_{k=1}^{m}\left( \nabla_{A^{k}}\nabla_{A^{k}}f\right)\left( x \right)
 \\
&= \sum_{k=1}^{m}\sum_{j=1}^{\infty}a_{j}^{k}\left( x \right)\partial_{j}\left( \sum_{i=1}^{\infty} a_{i}^{k}\left( x \right)\partial_{i}f\left( x \right)\right)
\\
&= \sum_{k=1}^{m}\sum_{i, j=1}^{\infty}a_{i}^{k}a_{j}^{k}\partial^{2}_{ij}f\left( x \right)+\sum_{k=1}^{m}\sum_{i,j=1}^{\infty}a_{j}^{k}\left( x \right)\partial_{j}a_{i}^{k}\left( x \right)\partial_{i}f\left( x \right), \quad x \in H.
\end{align*}
\end{enumerate}

\end{thm}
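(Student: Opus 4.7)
The plan is to verify existence and uniqueness in (1) by the standard Banach contraction argument for SDEs in Hilbert spaces, then derive (2) from It\^o's formula plus Gronwall, and finally obtain (3) by identifying the semigroup generator through It\^o's formula and standard infinite-dimensional parabolic regularity.

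First I would check that the coefficients $B$ and $F$ satisfy the Lipschitz and linear growth conditions required by the usual Hilbert-space SDE theory (see, e.g., Da Prato-Zabczyk). The assumption $\|A^k\|_{1,1}^2 = \sum_i \|a_i^k\|_{1,1}^2 < \infty$ with $a_i^k \in C_b^{1,1}(H)$ is precisely what is needed. Indeed, for $B^k(x)$ viewed as a bounded operator with Hilbert-Schmidt norm controlled by $\sum_i |a_i^k(x)|^2 \leqslant \|A^k\|_{1,1}^2$, we get uniform boundedness; the Lipschitz bound follows from $|a_i^k(x) - a_i^k(y)| \leqslant \|a_i^k\|_{1,1}|x-y|$ and summability. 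For $F^k$, the $i$-th coordinate is $\nabla_{A^k} a_i^k(x) = \sum_j a_j^k(x)\,\partial_j a_i^k(x)$, which is bounded by $\|A^k\|_{1,1}\cdot \|a_i^k\|_{1,1}$ by Cauchy-Schwarz, so $|F^k(x)|^2 \leqslant \|A^k\|_{1,1}^4$ and a similar Lipschitz estimate holds using $a_i^k \in C_b^{1,1}$ (the map $x\mapsto \nabla_{A^k}a_i^k(x)$ is Lipschitz because both factors are $C_b^{1,1}$). Once these two estimates are in hand, the contraction mapping argument in the Banach space of $H^m$-valued adapted processes with norm $(\mathbb{E}\sup_{t\leqslant T}|\cdot|^2)^{1/2}$ yields the unique solution claimed in (1).

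For (2), I would apply It\^o's formula to $|X_t|^2$. This gives
\[
|X_t|^2 = |X_0|^2 + 2\int_0^t \langle X_s, B(X_s)\,dW_s^T\rangle + 2\int_0^t \langle X_s, F(X_s)\rangle\,ds + \int_0^t \operatorname{Tr}\bigl(B(X_s) Q B(X_s)^*\bigr)\,ds.
\]
The trace term is controlled by $\sum_{k,i}\lambda_i^k |a_i^k(X_s)|^2 \leqslant C \sum_k \|A^k\|_{1,1}^2$ since $\sum_i \lambda_i^k = 2$. Taking expectations kills the martingale part, and a routine application of Gronwall's inequality gives $\mathbb{E}|X_t|^2 \leqslant C_T \mathbb{E}|X_0|^2$.

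For (3), I would first apply It\^o's formula to $f(X_t^x)$ for $f \in C_b^2(H)$ and take expectations. The martingale part vanishes, and gathering the drift and second-order terms produces exactly the operator $L$ displayed in the statement: the diffusion contribution from $B(X_s)\,dW_s^T$ yields $\sum_k \sum_{i,j} a_i^k a_j^k\,\partial^2_{ij} f$ via the covariance structure (absorbing the constant $\sum_i \lambda_i^k = 2$ into the definition), while the drift $F$ supplies $\sum_k \sum_{i,j} a_j^k\,\partial_j a_i^k\,\partial_i f$. This identifies $L$ as the generator and yields $\partial_t v = Lv$ at $t=0$; the semigroup property $v(t+s,x) = P_t P_s f(x)$ propagates this to all $t>0$. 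The main obstacle here is establishing that $v(t,\cdot) \in C_b^{1,2}(H)$ in the infinite-dimensional setting, since this requires Fr\'echet differentiability of the stochastic flow $x\mapsto X_t^x$ up to second order with appropriate bounds. The standard remedy, which I would invoke, is to differentiate the SDE formally with respect to the initial condition, obtain the first and second variation equations, and use the $C_b^{1,1}$ regularity of the $a_i^k$ together with the summability hypothesis to solve those equations in $L^2(\Omega; H)$; combined with the boundedness of $f, Df, D^2 f$ and dominated convergence, this transfers regularity from $f$ to $P_t f$. Uniqueness of the parabolic solution in $C_b^{1,2}$ then follows from It\^o's formula applied to any candidate solution composed with $X_{t-s}^x$.
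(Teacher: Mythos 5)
Your proposal is mathematically sound, and the coefficient estimates you sketch — the Hilbert--Schmidt bound for $B$ via $\sum_i |a_i^k(x)|^2 \leqslant \|A^k\|_{1,1}^2$, the Lipschitz estimate for $B$ from $a_i^k\in C_b^{1,1}$, and the Cauchy--Schwarz argument giving boundedness and Lipschitz continuity of $F$ — are essentially identical to the verifications the paper carries out. The difference in strategy is one of packaging: the paper verifies that $B$ and $F$ satisfy conditions (a)--(d) of the Lipschitz framework in Da Prato--Zabczyk (Theorem 7.4 of \cite{DaPratoZabczykBook1}) and then invokes that theorem for (1), Theorem 9.1 for the $L^2$ moment bound in (2), and Theorem 9.16 for the Kolmogorov backward equation in (3), finishing (3) with the explicit algebraic computation showing that the abstract generator $\tfrac12\operatorname{tr} v_{xx}(BQ^{1/2},BQ^{1/2}) + \langle v_x,F\rangle$ reduces to $\sum_k \nabla_{A^k}^2$. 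You instead propose to re-derive those black-box theorems: Picard iteration / contraction for (1), It\^o plus Gronwall for (2), and It\^o plus first- and second-variation equations for (3). This is exactly the content of the cited theorems, so it is a valid route, but it is substantially more work than needed — in particular, the $C_b^{1,2}$ regularity of $v(t,\cdot)$, which you correctly flag as the main obstacle, is precisely what Theorem 9.16 of \cite{DaPratoZabczykBook1} supplies under the Lipschitz/$C_b^{1,1}$ hypotheses you've already checked, and re-proving it via the variation SDEs is a nontrivial undertaking. One small point worth spelling out if you pursue your route: in the It\^o computation for (3) the factor $\tfrac12$ from the second-order term cancels against $\sum_i\lambda_i^k = 2$, which is exactly how the paper obtains the clean coefficient $a_i^k a_j^k$ in front of $\partial^2_{ij}f$; you note this in passing ("absorbing the constant") and it is correct.
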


\begin{proof}
For simplicity of notation we take $m=1$, and write $A^{1}$ for $A$ with corresponding functions $a_{j}$. The proof for the general case is very similar.

In this case  $B\left( x \right), x \in H$,  is a linear operator on $H$ defined by
\[
\langle B\left( x \right)h, e_{i} \rangle :=a_{i}\left( x \right), \text{ for any } h \in H,
\]
and $F: H \to H$ by
\[
\langle F\left( x \right), e_{i}\rangle:=\sum_{j=1}^{\infty} a_{j}\left( x \right)\partial_{j}a_{i}\left( x \right),
\]
or equivalently $B\left( x \right)e_{j}=A\left( x \right)$ , $F\left( x\right)=\sum_{i, j}^{\infty}a_{j}\left( x \right)\partial_{j}a_{i}\left( x \right) e_{i}$.

According to \cite[Theorem 7.4]{DaPratoZabczykBook1}, for this stochastic differential equation to have a unique mild solution it is enough  to check that\\
{(a)} $B\left( x \right) \left( \cdot \right)$ is a measurable map from $H$ to the space $L_{2}^{0}$ of Hilbert-Schmidt operators from $Q^{1/2}H$ to $H$;\\
{(b)} $\Vert B\left( x \right)-B\left( y \right)\Vert_{\operatorname{L_{2}^{0}}}\leqslant C \vert x-y \vert, \quad x, y \in H$;\\
{(c)} $\Vert B\left( x \right)\Vert_{\operatorname{L_{2}^{0}}}^{2}\leqslant K\left( 1+ \vert x\vert^{2}\right), \quad x \in H$;\\
{(d)} $F$ is Lipschitz continuous on $H$ and
$\vert F\left( x \right)\vert\leqslant L\left( 1+ \vert x\vert^{2}\right),\quad  x \in H$.

Let $\{ e_{j}\}_{j=1}^{\infty}$ be an orthonormal basis of $H$. Then $\{ \lambda_{j}^{1/2}e_{j}\}_{j=1}^{\infty}$ is an orthonormal basis of $Q^{1/2}H$. First observe that since $A$ is  bounded we have
\begin{align*}
&
\Vert B\left( x \right) \Vert_{\operatorname{L_{2}^{0}}}^{2}=\sum_{i,j=1}^{\infty} \vert \langle B\left( x \right)\lambda_{j}^{1/2} e_{j}, e_{i}\rangle\vert^{2},
\\
& \vert A\left( x \right)\vert^{2} \sum_{j=1}^{\infty} \lambda_{j} =2\vert A\left( x \right)\vert^{2}\leqslant  C,
\end{align*}
and similarly
\[
\Vert B\left( x \right)-B\left( y \right)\Vert_{L_{2}^{0}}\leqslant \Vert A \Vert_{1, 1} \vert x-y \vert.
\]
The last estimate implies
\[
\Vert B\left( x \right)\Vert_{L_{2}^{0}}\leqslant \operatorname{max}\{C, \vert B\left( 0 \right)\vert \}\left( 1+ \vert x\vert\right)
\]
which proves (a) and (c).
We also have
{\allowdisplaybreaks
\begin{align*}
 \vert F\left( x \right)-F\left( y \right)\vert^{2}&=\sum_{i=1}^{\infty} \langle F\left( x \right)-F\left( y \right), e_{i}\rangle^{2}
\\
&= \sum_{i=1}^{\infty} \left( \sum_{j=1}^{\infty} a_{j}\left( x \right)\partial_{j}a_{i}\left( x \right)-a_{j}\left( y \right)\partial_{j}a_{i}\left( y \right)\right)^{2}
\\
&\leqslant
2\sum_{i=1}^{\infty} \left( \sum_{j=1}^{\infty} \left( a_{j}\left( x \right)-a_{j}\left( y \right)\right)\partial_{j}a_{i}\left( x \right)\right)^{2}\\
&\qquad +
2\sum_{i=1}^{\infty} \left( \sum_{j=1}^{\infty} a_{j}\left( y \right)\left( \partial_{j}a_{i}\left( x \right)-\partial_{j}a_{i}\left( y \right)\right)\right)^{2}\\
&\leqslant
 2 \left( \sum_{j=1}^{\infty}\left( a_{j}\left( x \right)-a_{j}\left( y \right)\right)^{2}\right) \left( \sum_{i, j=1}^{\infty}\left(\partial_{j}a_{i}\left( x \right)\right)^{2}\right)\\
&\qquad +
 2 \left( \sum_{j=1}^{\infty}\left( a_{j}\left( y \right)\right)^{2}\right) \left( \sum_{i, j=1}^{\infty}\left(\partial_{j}a_{i}\left( x \right)-\partial_{j}a_{i}\left( y \right)\right)^{2}\right).
\end{align*}
} 
Now we can use our assumptions on $A$ to see that
\begin{align*}
 &\sum_{j=1}^{\infty}\left( a_{j}\left( x \right)-a_{j}\left( y \right)\right)^{2}\leqslant \sum_{j=1}^{\infty}\Vert a_{i} \Vert_{1, 1}^{2} \vert x-y\vert^{2}=\Vert A \Vert_{1, 1}^{2} \vert x-y\vert^{2},
\\
 &\sum_{j=1}^{\infty}\vert a_{j}\left( y \right)\vert^{2}\leqslant \Vert A \Vert_{1, 1}^{2},
\\
& \sum_{i, j=1}^{\infty}\vert \partial_{j}a_{i}\left( x \right)\vert^{2}=\sum_{i=1}^{\infty} \vert Da_{i}\left( x \right)\vert^{2}\leqslant \Vert A \Vert_{1, 1}^{2},\qquad \mbox{\rm and}
\\
&  \sum_{i, j=1}^{\infty}\left(\partial_{j}a_{i}\left( x \right)-\partial_{j}a_{i}\left( y \right)\right)^{2}=\sum_{i=1}^{\infty} \vert Da_{i}\left( x \right)-Da_{i}\left( y \right)\vert^{2}
\\
&\qquad \leqslant \sum_{i=1}^{\infty} \Vert a_{i} \Vert_{1, 1}^{2} \vert x-y \vert^{2} \leqslant \Vert A \Vert_{1, 1}^{2} \vert x-y \vert^{2},
\end{align*}
which gives Lipschitz continuity for $F$. Finally the estimate for $\vert F\left( x \right)\vert$ follows from the Lipschitz continuity of $F$ together with boundedness of $A$ in a similar fashion to what we did for $B$.

Assertion (2) follows directly from \cite[Theorem 9.1]{DaPratoZabczykBook1}.
Assertion (3) follows from  \cite[Theorem 9.16]{DaPratoZabczykBook1} which says that $P_{t}f$ is the solution to the parabolic type equation with operator
\begin{align*}
 Lv&=\frac{1}{2}\operatorname{tr} v_{xx}\left( B\left( x\right) Q^{1/2}, B\left( x\right) Q^{1/2}\right)+\langle v_{x}, F\left( x \right) \rangle\\
&=
 \frac{1}{2}\sum_{n=1}^{\infty} v_{xx}\left( B\left( x\right) Q^{1/2}e_{n}, B\left( x\right) Q^{1/2}e_{n}\right)+\Big\langle v_{x}, \sum_{i, j}^{\infty}a_{j}\left( x \right)\partial_{j}a_{i}\left( x \right) e_{i} \Big\rangle\\
&= \frac{1}{2}\sum_{n=1}^{\infty} \lambda_{n}v_{xx}\left( \sum_{i=1}^{\infty} a_{i}\left( x\right)e_{i}, \sum_{j=1}^{\infty} a_{j}\left( x\right)e_{j}\right)+ \sum_{i, j}^{\infty}a_{j}\left( x \right)\partial_{j}a_{i}\left( x \right) \langle v_{x}, e_{i} \rangle\\
&=
 \sum_{i, j=1}^{\infty} a_{i}\left( x\right)a_{j}\left( x\right)v_{xx}\left(e_{i},e_{j}\right)+ \sum_{i, j}^{\infty}a_{j}\left( x \right)\partial_{j}a_{i}\left( x \right) \langle v_{x}, e_{i} \rangle\\
&=
 \sum_{i, j=1}^{\infty} a_{i}\left( x\right)a_{j}\left( x\right)\partial^{2}_{ij}v + \sum_{i, j}^{\infty}a_{j}\left( x \right)\partial_{j}a_{i}\left( x \right) \partial_{i} v.
\end{align*}

\end{proof}

\begin{rem} Denote
\[
L^{k}f:=\nabla_{A^{k}}^{2}f=\sum_{i, j=1}^{\infty} a_{i}^{k}\left( x\right)a_{j}^{k}\left( x\right)\partial^{2}_{ij}f + \sum_{i, j}^{\infty}a_{j}^{k}\left( x \right)\partial_{j}a_{i}^{k}\left( x \right) \partial_{i} f,
\]
where $k=1, ..., m$. Suppose $f \in C_{b}^{2}\left( H  \right)$. Then
\begin{align*}
 \left\vert \left(L^{k}f\right)\left( x \right)\right\vert^{2} &\leqslant \sum_{i, j=1}^{\infty}\vert a_{i}^{k}a_{j}^{k}\left( x \right)\vert^{2}\sum_{i, j=1}^{\infty}\vert \partial^{2}_{ij}f\left( x \right)\vert^{2}\\
&\qquad
 +\sum_{j=1}^{\infty}\vert a_{j}^{k}\left( x \right)\vert^{2}\sum_{j=1}^{\infty}\left\vert \sum_{i=1}^{\infty} \partial_{j}a_{i}^{k}\left( x \right) \partial_{i}f\left( x \right)\right\vert^{2} \\
&\leqslant
 \Vert A^{k} \Vert_{1, 1}^{4} \Vert f \Vert_{2}^{2}+\Vert A^{k} \Vert_{1, 1}^{2}
\sum_{i, j=1}^{\infty}\left\vert \partial_{j}a_{i}^{k}\left( x \right)\right\vert^{2}
\sum_{i=1}^{\infty}\left\vert \partial_{i}f\left( x \right)\right\vert^{2} \\
&\leqslant
 2\Vert A^{k} \Vert_{1, 1}^{4} \Vert f \Vert_{2}^{2},
\end{align*}
and therefore $L^{k}$ is well-defined on $C_{b}^{2}\left( H \right)$, and so is $L=\sum_{k=1}^{m} L_{k}$.
\end{rem}

\subsection{Curvature-dimension inequality}

We can write
\[
L=\sum_{k=1}^{m} L_{k}=\sum_{k=1}^{m} \nabla_{A^{k}}^{2}.
\]

For any $f, g \in C_{b}^{2}\left( H \right)$ we define
\begin{align}
& \Gamma\left( f, g \right):=\frac{1}{2}\left( L\left( fg\right)-fL\left( g\right)-gL\left( f\right)\right),\label{rb-sec4.2-E1}
\\
& \Gamma_{2}\left( f \right):=\frac{1}{2}L\left( \Gamma\left( f, f \right)\right)-\Gamma\left( f, Lf \right).\label{rb-sec4.2-E2}
\end{align}

\begin{thm}\label{t.4.5} For any $f, g \in C_{b}^{2}\left( H \right)$,
\begin{align}
& \Gamma\left( f, g \right)=\sum_{k=1}^{m}\left( \nabla_{A^{k}}f \right)\left( \nabla_{A^{k}}g \right),\label{rb-sec4.2-E3}
\\
& \Gamma_{2}\left( f \right)=\sum_{k, l=1}^{m} \Gamma^{(k)}\left( \nabla_{A^{l}}f \right),\label{rb-sec4.2-E4}
\end{align}
where
\[
\Gamma^{(k)}\left( f \right):=\left(  \nabla_{A^{k}}f\right)^{2}.
\]
\end{thm}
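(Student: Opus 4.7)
The plan is to verify both identities by direct algebraic manipulation, treating each $\nabla_{A^k}$ as a first-order derivation that obeys the Leibniz rule. For \eqref{rb-sec4.2-E3}, two applications of Leibniz give
\[
\nabla_{A^k}^{2}(fg) = f\,\nabla_{A^k}^{2}g + 2(\nabla_{A^k}f)(\nabla_{A^k}g) + g\,\nabla_{A^k}^{2}f,
\]
and summing over $k=1,\ldots,m$ and substituting into \eqref{rb-sec4.2-E1} yields \eqref{rb-sec4.2-E3} at once. The absolute convergence of the defining series $\nabla_{A^k} f = \sum_i a_i^k\,\partial_i f$ for $f \in C_b^2(H)$, guaranteed by $\Vert A^k\Vert_{1,1} < \infty$, justifies every interchange of sums here and below; the content of the argument is otherwise entirely combinatorial.

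For \eqref{rb-sec4.2-E4}, I would apply \eqref{rb-sec4.2-E3} with $g=f$ to write $\Gamma(f,f) = \sum_l(\nabla_{A^l}f)^{2}$, expand by Leibniz again to get
\[
\tfrac{1}{2}L\Gamma(f,f) = \sum_{k,l}(\nabla_{A^k}\nabla_{A^l}f)^{2} + \sum_{k,l}(\nabla_{A^l}f)(\nabla_{A^k}^{2}\nabla_{A^l}f),
\]
and use \eqref{rb-sec4.2-E3} to compute
\[
\Gamma(f,Lf) = \sum_k(\nabla_{A^k}f)(\nabla_{A^k}Lf) = \sum_{k,l}(\nabla_{A^k}f)(\nabla_{A^k}\nabla_{A^l}^{2}f).
\]
Taking the difference according to \eqref{rb-sec4.2-E2} and relabeling $(k,l)\leftrightarrow(l,k)$ in one of the cross sums isolates the desired ``Hessian'' piece $\sum_{k,l}(\nabla_{A^k}\nabla_{A^l}f)^{2} = \sum_{k,l}\Gamma^{(k)}(\nabla_{A^l}f)$, plus a residual $\sum_{k,l}(\nabla_{A^k}f)\,[\nabla_{A^l}^{2},\nabla_{A^k}]f$.

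The main obstacle is showing this residual vanishes. Writing $[\nabla_{A^l}^{2},\nabla_{A^k}] = \nabla_{A^l}[\nabla_{A^l},\nabla_{A^k}] + [\nabla_{A^l},\nabla_{A^k}]\nabla_{A^l}$, one recognizes \eqref{rb-sec4.2-E4} as the \emph{flat} Bakry--Émery identity, carrying no Ricci-type correction. The proof must therefore invoke commutativity of the first-order fields, $[\nabla_{A^k},\nabla_{A^l}]=0$, which in the present coordinates is equivalent to the symmetric compatibility $\nabla_{A^k}a_j^l = \nabla_{A^l}a_j^k$ for all $j,k,l$ on the coefficients with respect to the fixed basis $\{e_n\}$. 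Once this commutativity is in hand, the residual commutator sum collapses identically and \eqref{rb-sec4.2-E4} reduces to straightforward index bookkeeping; if instead commutativity fails, one obtains \eqref{rb-sec4.2-E4} only up to curvature terms, and the cleaner formula of the theorem should be read as recording this implicit geometric hypothesis on the family $\{A^k\}$.
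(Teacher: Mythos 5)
Your derivation of \eqref{rb-sec4.2-E3} is exactly the paper's: apply the Leibniz rule for $L_k$ twice, sum over $k$, and substitute into the definition \eqref{rb-sec4.2-E1}. Your setup for \eqref{rb-sec4.2-E4} is also the paper's route, and your decomposition of $\Gamma_2$ into a ``Hessian'' piece $\sum_{k,l}\Gamma^{(k)}(\nabla_{A^l}f)$ plus a residual $\sum_{k,l}(\nabla_{A^l}f)\,[L_k,\nabla_{A^l}]f$ is the same identity the paper writes down.

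Where the two part company is in how the residual is dispatched, and here you have put your finger on a genuine problem rather than a gap in your own argument. The paper claims outright that $[L_k,\nabla_{A^l}]f=0$ for all $k,l$, deriving it from equation \eqref{e.4.5}. But \eqref{e.4.5} as displayed carries superscript errors (the coefficients appear with superscript $l$ throughout where several should be $k$), and a careful recomputation shows it is only valid when $k=l$; in that case the vanishing is automatic because $[\nabla_{A^k}^2,\nabla_{A^k}]=0$. For $k\ne l$ one finds, exactly as you say, that
\[
[L_k,\nabla_{A^l}]=\nabla_{A^k}[\nabla_{A^k},\nabla_{A^l}]+[\nabla_{A^k},\nabla_{A^l}]\nabla_{A^k},
\]
with
\[
[\nabla_{A^k},\nabla_{A^l}]=\sum_j\bigl(\nabla_{A^k}a_j^l-\nabla_{A^l}a_j^k\bigr)\partial_j,
\]
so the residual vanishes precisely under the compatibility condition $\nabla_{A^k}a_j^l=\nabla_{A^l}a_j^k$ for all $j,k,l$, i.e.\ $[\nabla_{A^k},\nabla_{A^l}]=0$. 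The paper does not state this hypothesis and its proof does not establish it, so for $m>1$ the argument --- yours and the paper's alike --- needs this extra assumption, and in its absence \eqref{rb-sec4.2-E4} acquires curvature corrections. For $m=1$ (the case the corollary emphasizes, giving $\Gamma_2(f)=(Lf)^2$) there is nothing to worry about and both arguments are complete. Your reading of the theorem as implicitly recording a commutativity hypothesis on $\{A^k\}$ is the correct diagnosis.
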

\begin{proof}
Note that for functions $f, g \in C_{b}^{2}\left( H \right)$
\begin{align}
 L_{k}\left( fg\right)&=fL_{k}\left( g\right)+gL_{k}\left( f\right)+2\left(\sum_{i}a_{i}^{k}\partial_{i}f\right)\left(\sum_{j}a_{j}^{k}\partial_{j}g\right) \label{e.4.3}
\\
&= fL_{k}\left( g\right)+gL_{k}\left( f\right)+2\left(\nabla_{A^{k}}f \right)\left(\nabla_{A^{k}}g \right),\notag
\end{align}
and therefore
\begin{equation}\label{e.9.1}
L\left( fg\right)=fL\left( g\right)+gL\left( f\right)+2\sum_{k=1}^{m} \left(\nabla_{A^{k}}f \right)\left(\nabla_{A^{k}}g \right).
\end{equation}
Hence
$$ \Gamma\left( f, g \right)=\frac{1}{2}\left( L\left( fg\right)-fL\left( g\right)-gL\left( f\right)\right)=\sum_{k=1}^{m}\left( \nabla_{A^{k}}f \right)\left( \nabla_{A^{k}}g \right),$$
and in particular
$ \Gamma\left( f \right):=\Gamma\left( f, f \right)=\sum_{k=1}^{m}\left( \nabla_{A^{k}}f \right)^{2}.$
Before we find $\Gamma_{2}\left( f \right)$ we need the following calculation.
\begin{align}
 [L_k,\partial_i]&:=\left( L_{k}\partial_{i}-\partial_{i}L_{k}\right)f=
 \sum_{jm}\left( a_{j}^{k}\partial_{j}a_{m}\right)\partial^{2}_{im}f+\sum_{jm}a_{j}^{k}a_{m}^{k}\partial^{3}_{ijm}f \notag\\
&\qquad -\partial_{i}\left(\sum_{jm} a_{j}^{k}\partial_{j}a_{m}^{k} \partial_{m}f+\sum_{jm}a_{j}^{k}a_{m}^{k}\partial^{2}_{jm}f\right) \label{e.9.2}
\\
&= -\sum_{jm}\left( \partial_{i}a_{j}^{k}\partial_{j}a_{m}^{k}+a_{j}^{k}\partial^{2}_{ij}a_{m}^{k}\right)\partial_{m}f -2\sum_{jm}\left(a_{m}^{k}\partial_{i}a_{j}^{k}\right) \partial^{2}_{jm}f. \notag
\end{align}
Use \eqref{e.9.2} to see that
{\allowdisplaybreaks
\begin{align}
 &\sum_{i} a_{i}^{l}\left([ L_{k}, \partial_{i}]f\right) \notag
\\
&= -\sum_{m}\left(\sum_{ij}\left( a_{i}^{l}\partial_{i}a_{j}^{l}\partial_{j}a_{m}^{l}+ a_{i}^{l}a_{j}^{l}\partial^{2}_{ij}a_{m}^{l}\right)\right)\partial_{m}f
\notag\\
&\qquad
-2\sum_{ijm}\left(a_{i}^{l}a_{m}^{l}\partial_{i}a_{j}^{l}\right) \partial^{2}_{jm}f \notag
\\
&= -\sum_{m}\left( L_{k}a_{m}^{l}\right)\partial_{m}f-2\sum_{ijm}\left(a_{i}^{l}a_{m}^{l}\partial_{i}a_{j}^{l}\right) \partial^{2}_{jm}f \notag
\\
&= -\sum_{m}\left( L_{k}a_{m}^{l}\right)\partial_{m}f-2\sum_{j}\left(\sum_{i}a_{i}^{l}\partial_{i}a_{j}^{l}\right)\left( \sum_{m} a_{m}^{l}\partial^{2}_{m j}f\right) \notag
\\
&=
-\sum_{m}\left( L_{k}a_{m}^{l}\right)\partial_{m}f-2\sum_{j}\left(\nabla_{A^{l}} a_{j}^{l}\right)\left(\nabla_{A^{l}}\partial_{j}f\right).\label{e.4.5}
\end{align}
}

Now we can deal with $\Gamma_{2}\left( f \right)$.
 We use \eqref{e.9.1} in the first line.
\begin{align*}
 \frac{1}{2}L\left( \Gamma\left( f\right)\right)&=\frac{1}{2}\sum_{k=1}^{m} L_{k}\left( \Gamma\left( f \right)\right)=\frac{1}{2}\sum_{k=1}^{m}L_{k}\left( \sum_{l=1}^{m}\left( \nabla_{A^{l}}f \right)^{2} \right)
\\
&= \sum_{k, l=1}^{m} \left( \left( \nabla_{A^{l}}f\right) \left( L_{k} \nabla_{A^{l}}f\right)+\Gamma^{(k)}\left( \nabla_{A^{l}}f \right)\right).
\end{align*}

The second term in $\Gamma_{2}\left( f \right)$ is
\[
\Gamma\left( f, Lf \right)=\sum_{l=1}^{m} \left( \nabla_{A^{l}}f\right)\left( \nabla_{A^{l}}Lf\right)=\sum_{k, l=1}^{m} \left( \nabla_{A^{l}}f\right)\left( \nabla_{A^{l}}L_{k}f\right).
\]
Thus
\[
\Gamma_{2}\left( f \right) = \sum_{k, l=1}^{m}  \left( \nabla_{A^{l}}f\right) \left( [ L_{k}, \nabla_{A^{l}}]f \right)+\sum_{k, l=1}^{m} \Gamma_{k}\left( \nabla_{A^{l}}f \right).
\]
By \eqref{e.4.3} we have
{\allowdisplaybreaks
\begin{align*}
  [ L_{k}, \nabla_{A^{l}}]f&= L_{k}\left( \sum_{j=1}^{\infty} a_{j}^{l} \partial_{j}f \right)- \sum_{j=1}^{\infty} a_{j}^{l} \partial_{j}L_{k}f
\\
&= \sum_{j=1}^{\infty} L_{k}\left(a_{j}^{l}\right) \partial_{j}f+ \sum_{j=1}^{\infty} a_{j}^{l} L_{k}\partial_{j}f +2 \sum_{j=1}^{\infty}\left( \nabla_{A^{k}}a_{j}^{l}\right)\left( \nabla_{A^{k}}\partial_{j}f\right) \\
&\qquad - \sum_{j=1}^{\infty} a_{j}^{l} \partial_{j}L_{k}f
\\
&= \sum_{j=1}^{\infty} L_{k}\left(a_{j}^{l}\right) \partial_{j}f+ \sum_{j=1}^{\infty} a_{j}^{l} [ L_{k}, \partial_{j}]f +2 \sum_{j=1}^{\infty}\left( \nabla_{A^{k}}a_{j}^{l}\right)\left( \nabla_{A^{k}}\partial_{j}f\right).
\end{align*}
}
We can use \eqref{e.4.5} to see that
$[ L_{k}, \nabla_{A^{l}}]f=0$ for $ k, l=1, ..., m.$
Thus \eqref{rb-sec4.2-E4} holds.
\end{proof}

\begin{cor} $L$ satisfies the curvature-dimension inequality $\operatorname{CD}\left( 0, m \right)$
\begin{equation}\label{e.CD}
\Gamma_{2}\left( f \right) \geqslant \frac{1}{m}\left( Lf \right)^{2}.
\end{equation}
Moreover, for $m=1$ we have
$\Gamma_{2}\left( f \right)=\left( Lf \right)^{2}$.
\end{cor}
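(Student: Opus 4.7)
The plan is to apply Theorem \ref{t.4.5} directly and then invoke Cauchy--Schwarz; no further commutator computations are needed since the hard work was already done in proving \eqref{rb-sec4.2-E4}. By that theorem together with the definition $\Gamma^{(k)}(g)=(\nabla_{A^{k}}g)^{2}$, I would rewrite
$$\Gamma_{2}(f)=\sum_{k,l=1}^{m}\Gamma^{(k)}(\nabla_{A^{l}}f)=\sum_{k,l=1}^{m}\bigl(\nabla_{A^{k}}\nabla_{A^{l}}f\bigr)^{2},$$
which exhibits $\Gamma_{2}(f)$ as a sum of squares and in particular shows it is non-negative.

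Next, I would discard all off-diagonal terms $k\neq l$ (each of which is a square, hence non-negative) to obtain the lower bound
$$\Gamma_{2}(f)\;\geqslant\;\sum_{k=1}^{m}\bigl(\nabla_{A^{k}}^{2}f\bigr)^{2}\;=\;\sum_{k=1}^{m}\bigl(L_{k}f\bigr)^{2},$$
using the identification $L_{k}=\nabla_{A^{k}}^{2}$. Since $Lf=\sum_{k=1}^{m}L_{k}f$, the ordinary Cauchy--Schwarz inequality in $\mathbb{R}^{m}$ yields
$$(Lf)^{2}=\Bigl(\sum_{k=1}^{m}L_{k}f\Bigr)^{2}\leqslant m\sum_{k=1}^{m}(L_{k}f)^{2}.$$
Combining the two displays gives $\Gamma_{2}(f)\geqslant\frac{1}{m}(Lf)^{2}$, which is precisely \eqref{e.CD}.

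For the final assertion, when $m=1$ the double sum defining $\Gamma_{2}(f)$ consists of the single term $(\nabla_{A^{1}}\nabla_{A^{1}}f)^{2}=(L_{1}f)^{2}=(Lf)^{2}$, so both the truncation to diagonal terms and the Cauchy--Schwarz step are equalities, yielding $\Gamma_{2}(f)=(Lf)^{2}$. There is no genuine obstacle here; the only point that needs to be noted is that the dimensional constant $m$ in $\mathrm{CD}(0,m)$ comes from the Cauchy--Schwarz factor and \emph{not} from the dimension of the underlying (infinite-dimensional) Hilbert space $H$, which is exactly why this inequality survives the passage to infinite dimensions.
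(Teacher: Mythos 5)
Your proof is correct and takes essentially the same route as the paper: both invoke Theorem~\ref{t.4.5} to write $\Gamma_2(f)=\sum_{k,l}(\nabla_{A^k}\nabla_{A^l}f)^2$ and then apply Cauchy--Schwarz; the paper compresses the discard-off-diagonal step and the Cauchy--Schwarz step into one line, whereas you spell them out separately, which is a helpful clarification but not a different argument.
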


\begin{proof}
Note that by the Cauchy–-Schwarz inequality
\[
\sum_{k, l=1}^{m} \Gamma_{k}\left( \nabla_{A^{l}}f \right)=\sum_{k, l=1}^{m} \left( \nabla_{A^{k}} \nabla_{A^{l}}f \right)^{2}\geqslant \frac{1}{m}\left( \sum_{k=1}^{m} \nabla_{A^{k}}^{2} f \right)^{2}=\frac{1}{m}\left( Lf \right)^{2}.
\]
Therefore
\[
\Gamma_{2}\left( f \right) \geqslant \sum_{k, l=1}^{m}  \left( \nabla_{A^{l}}f\right) \left( [ L_{k}, \nabla_{A^{l}}]f \right)+\frac{1}{m}\left( Lf \right)^{2}.
\]
\end{proof}

We need chain rules for the operators $\Gamma$ and $\Gamma_{2}$.

\begin{prop}
Let $\Psi$ be a $C^{\infty}$ function on $\mathbb{R}$ and suppose $f$ is in the domain of $L$. Then
\begin{equation}\label{e.9.3}
L\Psi\left( f \right)=\Psi^{\prime}\left( f \right)Lf+\Psi^{\prime \prime}\left( f \right)\Gamma\left( f, f \right),
\end{equation}
\begin{equation}\label{e.9.5}
\Gamma\left( \Psi\left( f \right), g\right)=\Psi^{\prime}\left( f \right)\Gamma\left( f, g \right),
\end{equation}
\begin{align}
\Gamma_{2}\left( \Psi\left( f \right)\right)&=\left( \Psi^{\prime \prime}\left( f \right)\right)^{2}\left(\Gamma\left( f\right)\right)^{2}+\left( \Psi^{\prime }\left( f \right)\right)^{2}\Gamma_{2}\left( f\right)
\label{e.9.6} \\
&\qquad +\Psi^{\prime }\left( f \right) \Psi^{\prime \prime}\left( f \right)\Gamma\left( f, \Gamma\left( f\right) \right).\notag
\end{align}
\end{prop}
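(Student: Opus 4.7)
The three identities are all consequences of the fact that each $\nabla_{A^k}$ is a genuine first-order vector field (a derivation), so the carré du champ calculus proceeds exactly as in the finite-dimensional Bakry--\'Emery framework. My plan is to prove \eqref{e.9.3} and \eqref{e.9.5} directly from the chain rule for $\nabla_{A^k}$, and then bootstrap \eqref{e.9.6} from these two together with the Leibniz identity \eqref{e.9.1}.

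For \eqref{e.9.5} I would start from the representation \eqref{rb-sec4.2-E3} and use that $\nabla_{A^k}\Psi(f) = \Psi'(f)\nabla_{A^k}f$, which follows coordinate-wise from the ordinary chain rule applied to each $\partial_i\Psi(f) = \Psi'(f)\partial_i f$ and the definition of $\nabla_{A^k}$. This gives \eqref{e.9.5} immediately. For \eqref{e.9.3}, I would compute $\nabla_{A^k}^2 \Psi(f)$ in two steps: first $\nabla_{A^k}\Psi(f) = \Psi'(f)\nabla_{A^k}f$, and then applying $\nabla_{A^k}$ again (using the Leibniz rule for vector fields) yields $\Psi''(f)(\nabla_{A^k}f)^2 + \Psi'(f)\nabla_{A^k}^2 f$. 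Summing over $k$ and using \eqref{rb-sec4.2-E3} with $g=f$ gives \eqref{e.9.3}.

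For \eqref{e.9.6} the plan is to apply the definition \eqref{rb-sec4.2-E2} with $\Psi(f)$ in place of $f$ and systematically reduce every term to expressions in $f$ alone. By \eqref{e.9.5}, $\Gamma(\Psi(f)) = (\Psi'(f))^2 \Gamma(f)$, and by \eqref{e.9.3}, $L\Psi(f) = \Psi'(f) Lf + \Psi''(f)\Gamma(f)$. To expand $\tfrac12 L\Gamma(\Psi(f)) = \tfrac12 L\bigl((\Psi'(f))^2 \Gamma(f)\bigr)$, I would apply \eqref{e.9.1} with $u = (\Psi'(f))^2$ and $v = \Gamma(f)$, which requires computing $L((\Psi'(f))^2)$ via \eqref{e.9.3} (with $\Psi$ replaced by $(\Psi')^2$, so $((\Psi')^2)' = 2\Psi'\Psi''$ and $((\Psi')^2)'' = 2(\Psi'')^2 + 2\Psi'\Psi'''$) and $\Gamma((\Psi'(f))^2,\Gamma(f)) = 2\Psi'(f)\Psi''(f)\,\Gamma(f,\Gamma(f))$ via \eqref{e.9.5}. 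For the second piece $\Gamma(\Psi(f), L\Psi(f))$ I would use \eqref{e.9.5} to pull out $\Psi'(f)$ and then the derivation property $\Gamma(f,uv) = u\Gamma(f,v)+v\Gamma(f,u)$ (which follows from \eqref{rb-sec4.2-E3} and the Leibniz rule for $\nabla_{A^k}$) to expand $\Gamma(f,\Psi'(f) Lf + \Psi''(f)\Gamma(f))$, again invoking \eqref{e.9.5} to handle $\Gamma(f,\Psi'(f))$ and $\Gamma(f,\Psi''(f))$.

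The only nontrivial aspect is a bookkeeping one: after substitution, terms involving $\Psi'''(f)$ (from $((\Psi')^2)''$ and from $\Gamma(f,\Psi''(f))$) as well as the $\Psi'(f)\Psi''(f)\,Lf\,\Gamma(f)$ cross-terms must cancel pairwise between the two contributions, leaving precisely the three surviving terms on the right-hand side of \eqref{e.9.6}. I expect this cancellation to be the main potential source of sign errors, but it is purely algebraic; once the expansions from \eqref{e.9.1}, \eqref{e.9.3}, and \eqref{e.9.5} are correctly assembled, collecting coefficients of $\Gamma_2(f)$, $(\Gamma(f))^2$, $\Gamma(f,\Gamma(f))$, and $Lf\cdot\Gamma(f)$ separately yields the stated identity with the last coefficient zero.
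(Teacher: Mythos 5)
Your proposal is correct and follows essentially the same route as the paper: prove \eqref{e.9.3} and \eqref{e.9.5} from the chain rule $\nabla_{A^k}\Psi(f)=\Psi'(f)\nabla_{A^k}f$, then expand $\tfrac12 L\Gamma(\Psi(f))$ via \eqref{e.9.1} and $\Gamma(\Psi(f),L\Psi(f))$ via the derivation property, and observe that the $\Psi'\Psi'''(\Gamma(f))^2$ and $\Psi'\Psi''\,(Lf)\Gamma(f)$ terms cancel. Your anticipated bookkeeping and cancellations match the paper's computation exactly.
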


\begin{proof}
Suppose $\Psi \in C^{\infty}\left( \mathbb{R}\right)$. Recall that we can write $L$ as
 $Lf=\sum_{k=1}^{m} L_{k}=\sum_{k=1}^{m}\nabla_{A^{k}}^{2}f,$
 where
 $\nabla_{A^{k}}f:=\sum_{i=1}^{\infty}a_{i}^{k}\partial_{i}f.  $
 It is clear that
 \begin{equation}\label{e.9.4}
  \nabla_{A^{k}}\left( \Psi\left( f \right) \right)=\Psi^{\prime}\left( f \right)\nabla_{A^{k}}f.
 \end{equation}
Then
 \begin{align*}
 \nabla_{A^{k}}\nabla_{A^{k}}\left( \Psi\left( f \right) \right)&=\nabla_{A^{k}}\left( \Psi^{\prime}\left( f \right)\right)\nabla_{A^{k}}f+ \Psi^{\prime}\left( f \right)\nabla_{A^{k}}\left(\nabla_{A^{k}}f\right)
\\
&= \Psi^{\prime \prime}\left( f \right)\left(\nabla_{A^{k}}f \right)^{2}+ \Psi^{\prime}\left( f \right)\nabla_{A^{k}}\left(\nabla_{A^{k}}f\right)
\\
&= \Psi^{\prime}\left( f \right)L_{k}f+\Psi^{\prime \prime}\left( f \right)\Gamma_{k}\left( f\right),
 \end{align*}
 which implies \eqref{e.9.3} by Theorem \ref{t.4.5}.

Now we can easily show \eqref{e.9.5}. Indeed, using \eqref{e.9.4} we have
\begin{align*}
 \Gamma_{k}\left( \Psi\left( f \right), g\right)&=\left( \nabla_{A^{k}} \Psi\left( f \right) \right)\left( \nabla_{A^{k}} g \right)
\\
&= \Psi^{\prime}\left( f \right) \left( \nabla_{A^{k}}  f  \right)\left( \nabla_{A^{k}} g \right)=\Psi^{\prime}\left( f \right)\Gamma_{k}\left( f, g \right).
\end{align*}
In particular, \eqref{e.9.5} implies
\[
\Gamma\left( \Psi\left( f \right)\right)=\left( \Psi^{\prime}\left( f \right)\right)^{2}\Gamma\left( f\right).
\]
Now we would like to prove \eqref{e.9.6}. First, using \eqref{e.9.5} twice we see that
\begin{equation}\label{e.8.8}
\Gamma\left( \Psi\left( f \right)\right)=\left(\Psi^{\prime}\left( f \right)\right)^{2}\Gamma\left( f \right).
\end{equation}
By \eqref{e.9.1} and \eqref{e.9.3}
\begin{align*}
 \frac{1}{2}&L\Gamma\left( \Psi\left( f \right)\right)
\\
&= \frac{1}{2}\Gamma\left(  f \right)L\left(\left(\Psi^{\prime}\left( f \right)\right)^{2}\right)+\frac{1}{2}\left(\Psi^{\prime}\left( f \right)\right)^{2}L\Gamma\left(  f \right)+\Gamma\left( \left(\Psi^{\prime}\left( f \right)\right)^{2}, \Gamma\left(  f \right)\right)
\\
&= \Psi^{\prime}\left( f \right)\Psi^{\prime \prime}\left( f \right)\left( Lf \right)\Gamma\left(  f \right)+\left( \left( \Psi^{\prime \prime}\left( f \right) \right)^{2}+\Psi^{\prime}\left( f \right)\Psi^{\prime \prime \prime}\left( f \right) \right)\left(\Gamma\left( f\right)\right)^{2}
\\
&\qquad +\frac{1}{2}\left( \Psi^{\prime }\left( f \right) \right)^{2} L\Gamma\left( f \right) +2\Psi^{\prime}\left( f \right)\Psi^{\prime \prime }\left( f \right)\Gamma\left( f, \Gamma\left( f \right)\right).
\end{align*}
Now use \eqref{e.9.1} and \eqref{e.9.6} repeatedly
to obtain
\begin{align*}
 \Gamma\left( \Psi\left( f \right), L\Psi\left( f \right) \right)&=
 \Gamma\left( \Psi\left( f \right),\Psi^{\prime}\left( f \right) Lf\right)+\Gamma\left( \Psi\left( f \right), \Psi^{\prime \prime}\left( f \right) \Gamma\left( f \right)\right)
\\
&= \left(\Psi^{\prime}\left( f \right)\right)^{2}\Gamma\left( f, Lf\right)+\Psi^{\prime}\left( f \right)\Psi^{\prime \prime}\left( f \right)\left( Lf\right)\Gamma\left( f\right)
\\
&\qquad + \Psi^{\prime}\left( f \right)\Psi^{\prime \prime}\left( f \right)\Gamma\left( f, \Gamma\left( f \right)\right)+\Psi^{\prime}\left( f \right)\Psi^{\prime \prime\prime}\left( f \right)\left(\Gamma\left( f \right)\right)^{2}.
\end{align*}
Note that we also used the fact that
\[
\Gamma\left( f, gh \right)=g\Gamma\left( f, h \right)+h\Gamma\left( f, h \right).
\]
Combining these two calculations gives \eqref{e.9.6}.
\end{proof}

\begin{cor}
By \eqref{e.9.6} with $\Psi\left( x \right)=\log x, x>0$, and $g>0$ we see that
\begin{equation}\label{e.9.7}
\Gamma_{2}\left( \log g \right)=\frac{\left(\Gamma\left( g \right)\right)^{2}}{g^{4}}-\frac{\Gamma\left( g, \Gamma\left( g \right)\right)}{g^{3}}+\frac{\Gamma_{2}\left( g\right)}{g^{2}}.
\end{equation}
\end{cor}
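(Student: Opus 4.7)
The plan is to apply the chain rule for $\Gamma_2$ established in \eqref{e.9.6} with the specific choice $\Psi(x) = \log x$ on the positive half-line, treating $g > 0$ as the function to which $\Psi$ is applied.

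First I would record the elementary derivatives: $\Psi'(x) = 1/x$ and $\Psi''(x) = -1/x^2$, so that $\Psi'(g) = 1/g$, $\Psi''(g) = -1/g^2$, and hence
\[
(\Psi''(g))^2 = \frac{1}{g^4}, \qquad (\Psi'(g))^2 = \frac{1}{g^2}, \qquad \Psi'(g)\,\Psi''(g) = -\frac{1}{g^3}.
\]
Substituting these three expressions into the three terms on the right-hand side of \eqref{e.9.6} yields exactly
\[
\Gamma_2(\log g) = \frac{(\Gamma(g))^2}{g^4} + \frac{\Gamma_2(g)}{g^2} - \frac{\Gamma(g, \Gamma(g))}{g^3},
\]
which is the claimed identity.

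There is essentially no obstacle here beyond verifying that the formal substitution is legitimate: since $g > 0$, the function $\Psi = \log$ is smooth on the range of $g$, so the hypothesis of the chain rule \eqref{e.9.6} is satisfied (using a standard cutoff or composition with a smooth extension if one wants to remain inside $C^\infty(\mathbb{R})$ strictly, replacing $\log$ by any smooth extension agreeing with it on a neighborhood of the range of $g$, since only pointwise values of $\Psi, \Psi', \Psi''$ at $g(x)$ enter the identity). Thus the corollary reduces to a one-line substitution.
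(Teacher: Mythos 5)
Your proposal is correct and is exactly the argument the paper has in mind: the corollary is stated as an immediate consequence of \eqref{e.9.6}, and plugging in $\Psi'(g)=1/g$, $\Psi''(g)=-1/g^2$ gives the three coefficients $1/g^4$, $1/g^2$, $-1/g^3$ as you computed. Your remark about replacing $\log$ by a smooth extension on a neighborhood of the range of $g$ (since the chain rule was stated for $\Psi\in C^\infty(\mathbb{R})$) is a sensible piece of bookkeeping that the paper leaves implicit.
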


\subsection{Li-Yau estimate}

The following is the Li-Yau estimate in our context. In this proof we follow an argument in \cite{BakryLedoux2006}, which they used to prove a finite-dimensional logarithmic Sobolev inequality for heat kernel measures. 

\begin{thm}
\begin{equation}\label{e.9.11}
L\left( \log P_{t}f \right)>-\frac{1}{2t}.
\end{equation}
\end{thm}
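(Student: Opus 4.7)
The plan is to follow the Bakry--Ledoux semigroup interpolation method for deriving Li--Yau-type estimates from the curvature-dimension inequality \eqref{e.CD}. Fix $x_{0}\in H$ and $t>0$, and take $f>0$ sufficiently smooth that $P_{s}f$ is strictly positive and in $C_{b}^{1,2}$ for $s\in[0,t]$. Set $h_{s}:=\log P_{t-s}f$, so that $h_{0}=\log P_{t}f$ is the function whose $L$-image we wish to bound below. Combining the backward heat equation $\partial_{s} P_{t-s}f=-L P_{t-s}f$ with the chain rule \eqref{e.9.3} applied to $\Psi=\log$ yields $\partial_{s}h_{s}=-Lh_{s}-\Gamma(h_{s})$. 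I would then introduce the interpolating functional
\begin{equation*}
\Lambda(s) \; := \; P_{s}\bigl(-Lh_{s}\bigr)(x_{0}), \qquad s\in[0,t],
\end{equation*}
so that $\Lambda(0)=-L\log P_{t}f(x_{0})$ is precisely the quantity we need to bound from above (by $m/(2t)$, which the statement absorbs into the constant).

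A short computation using the commutation $LP_{s}=P_{s}L$ and the identity $L\Gamma(h)=2\Gamma_{2}(h)+2\Gamma(h,Lh)$, which is a rearrangement of \eqref{rb-sec4.2-E2}, gives
\begin{equation*}
\Lambda'(s) \;=\; 2\,P_{s}\bigl(\Gamma_{2}(h_{s}) + \Gamma(h_{s},Lh_{s})\bigr)(x_{0}).
\end{equation*}
Next, apply the $CD(0,m)$ bound $\Gamma_{2}(h_{s})\geqslant\tfrac{1}{m}(Lh_{s})^{2}$ together with Jensen's inequality $P_{s}(g^{2})\geqslant (P_{s}g)^{2}$ (with $g=-Lh_{s}$) to produce a term bounded below by $(2/m)\Lambda(s)^{2}$. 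The indefinite cross term $2P_{s}(\Gamma(h_{s},Lh_{s}))$ I would absorb by enlarging $\Lambda$ with a correction of the form $c(s)P_{s}(\Gamma(h_{s}))(x_{0})$, the time weight $c(s)$ being chosen so that its derivative exactly cancels the cross term, using the chain rule \eqref{e.9.7} for $\Gamma_{2}(\log g)$ to expand $\partial_{s}\Gamma(h_{s})=-2\Gamma(h_{s},Lh_{s})-2\Gamma(h_{s},\Gamma(h_{s}))$. The resulting closed differential inequality $\widetilde{\Lambda}'(s)\geqslant(2/m)\widetilde{\Lambda}(s)^{2}$, compared with the blow-up ODE $y'=(2/m)y^{2}$ on $[0,t]$, forces $\widetilde{\Lambda}(0)\leqslant m/(2t)$, which is the claimed pointwise estimate.

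The main obstacle is the algebraic calibration of the weight $c(s)$: one must exploit the special commutation relation $[L_{k},\nabla_{A^{l}}]=0$ established in the proof of Theorem~\ref{t.4.5}, together with \eqref{e.9.7}, to ensure that the residual after the cancellation of the cross term has the correct sign and combines with the $\Gamma_{2}$ bound rather than degrading it. A secondary, more technical difficulty specific to the Hilbert-space setting is justifying the interchanges of $L$, $\Gamma$, $\Gamma_{2}$ with $P_{s}$; this relies on the Fr\'echet regularity of $P_{s}f$ granted by Theorem~\ref{t.8.2}(3) and the $C_{b}^{1,1}$ bounds on the vector fields $A^{k}$.
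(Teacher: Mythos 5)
Your strategy of semigroup interpolation combined with $CD(0,m)$ is the right Bakry--Ledoux method and is what the paper follows, but the interpolating functional you introduce is not the one that closes, and the repair you propose cannot succeed. The paper's functional is
\begin{equation*}
\varphi(s)=P_{s}\Big(\frac{\Gamma(P_{t-s}f)}{P_{t-s}f}\Big)=P_{s}\big(P_{t-s}f\cdot\Gamma(\log P_{t-s}f)\big),
\end{equation*}
which differs from your $\Lambda(s)=P_{s}(-Lh_{s})$, $h_{s}=\log P_{t-s}f$, by a multiplicative weight $g:=P_{t-s}f$ placed \emph{inside} $P_{s}$. This weight is decisive: since $g\,\Gamma(\log g)=Lg-g\,L\log g$, the paper's $\varphi$ is, up to the constant $LP_{t}f$, exactly the $g$-weighted version $-P_{s}(gLh_{s})$ of your $\Lambda$. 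When one differentiates the weighted functional, the cross terms $\Gamma(h_s,Lh_s)$ and $\Gamma(h_s,\Gamma(h_s))$ assemble, via \eqref{e.9.7}, into the single expression $\varphi'(s)=2P_{s}\big(g\,\Gamma_{2}(\log g)\big)\geqslant 0$, to which $CD(0,m)$ applies directly. By contrast your unweighted derivative $\Lambda'(s)=2P_{s}\big(\Gamma_{2}(h_{s})+\Gamma(h_{s},Lh_{s})\big)$ carries an indefinite term with no $\Gamma_2$ interpretation.

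The proposed correction $c(s)P_{s}(\Gamma(h_{s}))$ cannot absorb that cross term, because it never produces one. Setting $\psi(s):=P_{s}(\Gamma(h_{s}))$, one finds
\begin{equation*}
\psi'(s)=P_{s}\big(L\Gamma(h_{s})\big)+P_{s}\big(\partial_{s}\Gamma(h_{s})\big)
=P_{s}\big(2\Gamma_{2}(h_{s})+2\Gamma(h_{s},Lh_{s})\big)-P_{s}\big(2\Gamma(h_{s},Lh_{s})+2\Gamma(h_{s},\Gamma(h_{s}))\big),
\end{equation*}
in which the two $\Gamma(h_{s},Lh_{s})$ contributions cancel identically, leaving $\psi'(s)=2P_{s}\big(\Gamma_{2}(h_{s})-\Gamma(h_{s},\Gamma(h_{s}))\big)$. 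Hence no choice of $c(s)$ can generate a compensating $\Gamma(h_{s},Lh_{s})$ term; your correction merely trades one indefinite cross term for a different one. The mechanism that actually kills the cross terms is multiplication by the positive solution $g$ inside $P_{s}$, not by a scalar function of time. Finally, the paper does not close by a direct comparison with the blow-up ODE at an unspecified boundary; after the Cauchy--Schwarz step it integrates $-\partial_{s}\big((\varphi(s)-LP_{t}f)^{-1}\big)$ over $[0,t]$, uses the endpoint identities $\varphi(0)-LP_{t}f=-P_{t}f\,L(\log P_{t}f)$ and $\varphi(t)-LP_{t}f=-P_{t}\big(fL(\log f)\big)$, and combines them with the preliminary monotonicity \eqref{e.9.9} to reach $1+2t\,L(\log P_{t}f)>0$. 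Your ODE comparison would require an a priori bound on $\widetilde\Lambda$ at $s=t$ that you have not supplied; the endpoint identities for the weighted functional are precisely what furnish it.
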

\begin{proof}
By \eqref{e.9.5} with $\Psi\left( x \right)=\log x, x>0$, $f>0$, and $0\leqslant s \leqslant t$,
\[
\Gamma\left( P_{t-s}f \right):=\Gamma\left( P_{t-s}f, P_{t-s}f \right)=\left( P_{t-s}f \right)^{2} \Gamma\left( \log P_{t-s}f \right)
\]
Define for $f>0$
\[
\varphi\left( s \right):=P_{s}\left( P_{t-s}f \Gamma\left( \log P_{t-s}f \right)\right)=P_{s}\left( \frac{\Gamma\left( P_{t-s}f \right)}{P_{t-s}f}\right).
\]
Then with $g:=P_{t-s}f$ and $\partial_{s}g=-Lg$ we see that by \eqref{e.9.3} and \eqref{e.9.5}
\begin{align*}
 &\varphi^{\prime}( s )=\partial_{s}\Big(P_{s}\Big( \frac{\Gamma( g )}{g}\Big)\Big)
\\
&= P_{s}\Big( L\Big( \frac{\Gamma( g )}{g}\Big)-\frac{2\Gamma(g, Lg )}{g}+
\frac{\Gamma( g) Lg}{g^{2}}\Big)
\\
&= P_{s}\Big(
\Big({L\Gamma( g )}{g}
+\Gamma( g )L\Big( \frac{1}{g}\Big)
+2\Gamma\Big( \Gamma(g), \frac{1}{g}\Big)
-\frac{2\Gamma(g, Lg )}{g}+
\frac{\Gamma( g) Lg}{g^{2}}\Big)
\\
&= P_{s}\Big(
\Gamma( g )\Big( \frac{2\Gamma( g )}{g^{3}}-\frac{Lg}{g^{2}}\Big)
-\frac{2\Gamma( \Gamma(g), g)}{g^{2}}+\frac{L\Gamma( g )
 -2\Gamma(g, Lg )}{g}+
\frac{\Gamma( g) Lg}{g^{2}}\Big)
\\
&=2P_{s}\Big(\frac{(\Gamma( g ))^{2}}{g^{3}}-\frac{\Gamma( g, \Gamma( g )\Big)}{g^{2}}+\frac{\Gamma_{2}( g)}{g}\Big)=
2P_{s}\Big(g \Gamma_{2}( \log g )\Big)
\end{align*}
by \eqref{e.9.7}. We use the curvature-dimension inequality \eqref{e.CD} to obtain
\begin{equation}\label{e.8.10}
\varphi^{\prime}\left( s \right)\geqslant \frac{2}{m} P_{s}\left( g \left( L \log g \right)^{2} \right).
\end{equation}
In particular, this means that $\varphi$ is non-decreasing, and therefore
\[
\varphi\left( 0 \right)= P_{t}f \Gamma\left( \log P_{t}f \right)\leqslant P_{t}\left( f \Gamma\left( \log f \right)\right)=\varphi\left( t \right).
\]
Using the chain rule \eqref{e.9.5} we get
\[
P_{t}f \Gamma\left( \log P_{t}f \right)=\frac{\Gamma\left( P_{t}f \right)}{P_{t}f}\leqslant P_{t}\left( \frac{\Gamma\left( f \right)}{f}\right) =P_{t}\left( f \Gamma\left( \log f \right)\right).
\]
This inequality together with \eqref{e.9.3} gives
\[
P_{t}fL\left( \log P_{t}f\right)=LP_{t}f-\frac{\Gamma\left( P_{t}f \right)}{P_{t}f}\geqslant LP_{t}f- P_{t}\left( \frac{\Gamma\left( f \right)}{f}\right)=P_{t}\left( f L\left( \log f \right)\right).
\]
Thus
\begin{equation}\label{e.9.9}
P_{t}fL\left( \log P_{t}f\right)\geqslant P_{t}\left( f L\left( \log f \right)\right).
\end{equation}
We need more information about $\varphi$ to complete the proof. Our expression for $\varphi^{\prime}$ can be rewritten using the chain rule \eqref{e.9.3}
as
\[
\varphi^{\prime}\left( s \right)= P_{s}\left( g \left( L \log g \right)^{2} \right)=P_{s}\left( \frac{1}{g} \left( Lg-\frac{\Gamma\left( g \right)}{g} \right)^{2} \right).
\]
Note that since $g>0$ we have
\begin{align*}
 P_{s}\left( Lg-\frac{\Gamma\left( g \right)}{g} \right)&=P_{s}\left( \sqrt{g}\left( \frac{1}{\sqrt{g}}\left( Lg-\frac{\Gamma\left( g \right)}{g} \right)\right)\right)
\\
&\leqslant \left( P_{s}g\right)^{1/2}\left( P_{s}\left( \frac{1}{g}\left( Lg-\frac{\Gamma\left( g \right)}{g} \right)^{2}\right)\right)^{1/2},
\end{align*}
so
\[
P_{s}\left( \frac{1}{g}\left( Lg-\frac{\Gamma\left( g \right)}{g} \right)^{2}\right)\geqslant \frac{\left(P_{s}\left( Lg-\frac{\Gamma\left( g \right)}{g} \right) \right)^{2}}{P_{s}g}
\]
Since $\varphi\left( s \right)=P_{s}\left( \frac{\Gamma\left( g \right)}{g} \right)$, the last estimate becomes
\[
\varphi^{\prime}\left( s \right)\geqslant 2\frac{\left( P_{s}Lg-\varphi\left( s\right) \right)^{2}}{P_{s}g}.
\]
Now use the definition of  $g$ and the fact that $L$ and $P_{s}$ commute to see that $P_{s}g=P_{t}f$, so we have that for $0\leqslant s \leqslant t$
\[
\varphi^{\prime}\left( s \right)\geqslant 2\frac{\left(L P_{t}f-\varphi\left( s\right) \right)^{2}}{P_{t}f}=2\frac{\left(\varphi\left( s\right)- L P_{t}f\right)^{2}}{P_{t}f}.
\]
Thus for all $s$ such that $\varphi^{\prime}\left( s \right)>0$ we have
\[
-\partial_{s}\left( \frac{1}{\varphi\left( s\right)- L P_{t}f}\right)\geqslant\frac{2}{P_{t}f}>0.
\]
By \eqref{e.8.10} we know that $\varphi^{\prime}\left( s \right)\geqslant 0$, and by integrating this estimate from $0$ to $t$, we obtain
\[
\frac{1}{\varphi\left( 0\right)- L P_{t}f}-\frac{1}{\varphi\left( t\right)- L P_{t}f}\geqslant\frac{2t}{P_{t}f}.
\]
That is,
\[
\frac{\varphi\left( t\right)-\varphi\left( 0\right)}{\left(\varphi\left( 0\right)- L P_{t}f\right)\left( \varphi\left( t\right)- L P_{t}f\right)}\geqslant\frac{2t}{P_{t}f}>0.
\]
Since $\varphi$ is non-decreasing, the numerator on the left is non-negative. Since the right hand side of the estimate is positive, no matter what the sign of the denominator on the left, the following estimate holds:
\[
\varphi\left( t\right)-\varphi\left( 0\right)\geqslant\frac{2t}{P_{t}f}\left(\varphi\left( 0\right)- L P_{t}f\right)\left( \varphi\left( t\right)- L P_{t}f\right).
\]
Similarly to the proof of \eqref{e.9.9}
\begin{align*}
& \varphi\left( 0 \right)-LP_{t}f=\frac{\Gamma\left( P_{t}f\right)}{P_{t}f}-LP_{t}f=-P_{t}fL\left( \log P_{t}f \right),
\\
& \varphi\left( t \right)-LP_{t}f=P_{t}\left(\frac{\Gamma\left(f\right)}{f}\right)-LP_{t}f=-P_{t}\left(fL\left( \log f \right)\right).
\end{align*}
Finally we have
\begin{equation}\label{e.9.10}
P_{t}fL\left( \log P_{t}f \right)\geqslant P_{t}\left(fL\left( \log f \right)\right)\left(1+2t L\left( \log P_{t}f \right)\right).
\end{equation}

Now we are ready to prove \eqref{e.9.11}. We only need to check \eqref{e.9.11} when $L\left( \log P_{t}f \right)<0$. In this case, by \eqref{e.9.9}
\[
P_{t}\left( f L\left( \log f \right)\right)<0,
\]
and therefore \eqref{e.9.10} implies
\[
1+2t L\left( \log P_{t}f \right)>0.
\]
\end{proof}
\begin{cor}\label{cor.8.8} For $f>0$
\[
-\partial_{t}\left( \log P_{t}f \right)<\frac{1}{2t}-\Gamma\left( \log P_{t}f \right).
\]
\end{cor}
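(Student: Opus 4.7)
The proof should be a direct algebraic manipulation that converts the Li--Yau bound $L(\log P_t f) > -\tfrac{1}{2t}$ from the preceding theorem into the stated gradient--heat bound. No new analytic input is needed; everything reduces to the chain rules \eqref{e.9.3} and \eqref{e.9.5} together with the identity $\partial_t P_t f = L P_t f$.

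The plan is the following. First I would compute the time derivative directly:
\[
\partial_t (\log P_t f) \;=\; \frac{\partial_t P_t f}{P_t f} \;=\; \frac{L P_t f}{P_t f}.
\]
Next, apply the chain rule \eqref{e.9.3} with $\Psi(x) = \log x$ and $g = P_t f > 0$ to obtain
\[
L(\log P_t f) \;=\; \frac{L P_t f}{P_t f} \;-\; \frac{\Gamma(P_t f)}{(P_t f)^2}.
\]
By \eqref{e.9.5} (applied twice, i.e.\ by the displayed consequence $\Gamma(\Psi(f)) = (\Psi'(f))^2 \Gamma(f)$ already noted in the paper), the second term is precisely $\Gamma(\log P_t f)$. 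Combining the two displays gives the key identity
\[
\partial_t(\log P_t f) \;=\; L(\log P_t f) \;+\; \Gamma(\log P_t f).
\]

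Finally, the Li--Yau estimate \eqref{e.9.11} from the preceding theorem asserts $L(\log P_t f) > -\tfrac{1}{2t}$, equivalently $-L(\log P_t f) < \tfrac{1}{2t}$. Negating the identity above and inserting this bound yields
\[
-\partial_t (\log P_t f) \;=\; -L(\log P_t f) \;-\; \Gamma(\log P_t f) \;<\; \frac{1}{2t} - \Gamma(\log P_t f),
\]
which is the claim.

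There is essentially no obstacle here; the only thing to be slightly careful about is the sign convention in the chain rule and the fact that $g = P_t f > 0$ is required so that $\log P_t f$ is well-defined and $\Psi \in C^\infty$ on the range of $g$, both of which hold under the positivity hypothesis on $f$.
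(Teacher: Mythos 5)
Your proof is correct and follows essentially the same route as the paper: use the chain rule \eqref{e.9.3} (together with $\Gamma(\log g) = \Gamma(g)/g^2$ from \eqref{e.8.8}) to obtain the identity $L(\log P_t f) = \partial_t(\log P_t f) - \Gamma(\log P_t f)$, then invoke the Li--Yau bound \eqref{e.9.11} and rearrange.
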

\begin{proof}
By \eqref{e.9.3} and \eqref{e.8.8}
\begin{align*}
 L\left( \log P_{t}f \right)&=\frac{LP_{t}f}{P_{t}f}-\frac{\Gamma\left( P_{t}f \right)}{\left( P_{t}f \right)^{2}}
\\
&= \frac{\partial_{t}P_{t}f}{P_{t}f}-\Gamma\left( \log P_{t}f \right)=\partial_{t}\left( \log P_{t}f \right)-\Gamma\left( \log P_{t}f \right)>-\frac{1}{2t}.
\end{align*}
\end{proof}

\subsection{Distances}\label{ss.4.4} For the purposes of the next subsection we need to introduce several distances related to the gradient $\nabla_{A}$. A natural distance as described in \cite{Bakry2006Tata} is:\\
\[
d\left( x, y \right):= \sup_{\{ f: \Gamma \left( f \right)\leqslant 1 \}} \left( f\left( y \right)-f\left( x \right) \right), \qquad x,y\in H.
\]

We will need another distance which is better suited for the proof of the parabolic Harnack inequality, and it will turn out that this distance is equal to the one we have just defined. First we note that for any $x\in H$ there is a smooth path $\gamma_{A}: [0, \infty) \to H^m$ (possibly defined only on a finite subinterval $[ 0, T ]$ of $\mathbb{R}_{+}$) such that
\begin{equation}\label{e.6.16}
\dot{\gamma_{A}}\left( t \right)=A\left(\gamma_{A}\left( t \right)\right),\qquad  \gamma_{A}\left( 0 \right)=x.
\end{equation}
This is equivalent to solving a system of ordinary differential equations, which gives $\gamma_{A}$ implicitly as the solution to
\[
x_{j}+\int \frac{d \gamma_{j}}{a_{j}\left( \gamma \right)}=t.
\]
Using the assumption that $a_{j}>0$, we can determine $\gamma_{A}$ as a function of $t$.

An admissible component of $x$ is defined as
\[
V_{A}\left( x \right):=\{ \gamma_{A}\left( s\right), \text{ where } s\in [0, T ],  \dot{\gamma_{A}}\left( s \right)=A\left(\gamma_{A}\left( s \right)\right), \gamma_{A}\left( 0 \right)=x \}
\]
as described by \eqref{e.6.16}.

\begin{ex}{\rm Suppose $a_{j}\left( x \right)=c_{j}$. Then $\gamma$ is a straight line, and so $V_{A}$ is a straight line through $x$ in the direction of $\left( c_{1}, c_{2}, .... \right)$. In particular, if $H=\mathbb{R}^{2}$, and  $a_{1}\left( x \right)=1$ and $a_{2}\left( x \right)=0$, then  $V_{A}$ is a horizontal line through $x$.
}\end{ex}

\begin{df}\label{d.6.12} Let $x \in H$, and define
\[
d_{arc}\left( x, y \right):=\left\{
                                     \begin{array}{ll}
T_{y}, & y \in V_{A}\left( x \right); \\
& \\
+\infty, & y \notin V_{A}\left( x \right),
                                     \end{array}
                                   \right.
\]
where the path $\gamma_{A}$ is described by \eqref{e.6.16} with $\gamma_{A}\left( T_{y} \right)=y$.
\end{df}
\begin{rem} Note that our assumptions on $A$ are essential for the definition of the distance function $d_{arc}$ as we use the ordinary differential equations \eqref{e.6.16} to find $\gamma_{A}$.
\end{rem}
\begin{thm} For any $x, y \in H$
\[
d\left( x, y \right)=d_{arc}\left( x, y \right).
\]
\end{thm}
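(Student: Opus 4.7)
The plan is to prove the two inequalities $d(x,y) \leq d_{arc}(x,y)$ and $d(x,y) \geq d_{arc}(x,y)$ separately, obtaining the first by integrating along the admissible path and the second by exhibiting a near-optimal test function built from the flow of $A$.

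\emph{Upper bound.} If $y \notin V_A(x)$ the inequality is trivial since $d_{arc}(x,y) = \infty$. Otherwise let $\gamma_A: [0, T_y] \to H$ be the path from $x$ to $y$ given by \eqref{e.6.16}. For any smooth $f$ with $\Gamma(f) \leq 1$, the chain rule gives
$$f(y) - f(x) = \int_0^{T_y} \tfrac{d}{dt} f(\gamma_A(t))\, dt = \int_0^{T_y} \nabla_A f(\gamma_A(t))\, dt,$$
and by Theorem \ref{t.4.5} combined with the Cauchy-Schwarz inequality, the integrand is bounded pointwise by $\sqrt{\Gamma(f)} \leq 1$. Hence $f(y) - f(x) \leq T_y = d_{arc}(x,y)$, and taking the supremum over $f$ yields the desired inequality.

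\emph{Lower bound.} For $y \in V_A(x)$, the natural candidate is the time-to-reach function $\phi(z) := d_{arc}(x, z)$, which by construction satisfies $\phi\circ\gamma_A(t) = t$ and therefore $\nabla_A \phi = 1$ (so $\Gamma(\phi) = 1$) along the flow line. The plan is to construct a smooth global extension $f$ of $\phi|_{V_A(x)}$ with $\Gamma(f) \leq 1$: since $A(x) \neq 0$ (all components $a_i^k > 0$), choose a smooth hypersurface $S \subset H$ through $x$ transverse to $A(x)$, use the flow of $A$ to parameterize a tube $\Phi: S \times (-\delta, T_y + \delta) \to H$ by $(s, t) \mapsto \Phi_t(s)$, and in these tube coordinates set $f(s, t) = t\,\chi(s)$, where $\chi: S \to [0,1]$ is a smooth cutoff equal to $1$ near the base point $s_x$ corresponding to $x$. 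Extending $f$ by zero outside the tube gives $\nabla_A f = \partial_t f = \chi(s)$ in the tube and $\nabla_A f \equiv 0$ outside, so $\Gamma(f) = \chi(s)^2 \leq 1$ everywhere; since $f(y) = T_y \cdot \chi(s_x) = T_y$ and $f(x) = 0$, we conclude $d(x,y) \geq T_y = d_{arc}(x,y)$.

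\emph{Main obstacle.} The principal difficulty is the construction of the transverse tube and smooth cutoff in infinite dimensions: while the local Lipschitz theory for the ODE flow carries over, the existence of a transverse hypersurface and a smooth cutoff depending on countably many coordinates requires some care. A secondary issue is the case $y \notin V_A(x)$, where one must show $d(x,y) = \infty$; this can be handled by the same tube construction with progressively longer flow intervals when $y$ lies in the forward orbit, and by constructing auxiliary test functions detecting angular deviation otherwise.
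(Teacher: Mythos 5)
Your upper bound argument matches the paper's: integrate $f$ along the admissible path $\gamma_A$, identify the integrand $\tfrac{d}{dt}f(\gamma_A(t))$ with $\nabla_A f$, and bound it by $\sqrt{\Gamma(f)}\leqslant 1$. Your lower bound, however, diverges from the paper's and is where the gap lies. The paper simply picks $f_A$ with $\nabla f_A = A/|A|^2$, observes $\Gamma(f_A)=\langle\nabla f_A,A\rangle^2=1$, and integrates to get $f_A(y)-f_A(x)=T_y$; the existence of such a global potential is taken for granted. You instead try to build the witness explicitly from a transverse tube $\Phi:S\times(-\delta,T_y+\delta)\to H$ with $f(s,t)=t\,\chi(s)$ and then extend by zero. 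This does not produce an admissible function: at the temporal ends $t=-\delta$ and $t=T_y+\delta$ the value $t\,\chi(s)$ is nonzero for $s$ near $s_x$ (e.g.\ $(T_y+\delta)\cdot 1$), so the extension by zero is discontinuous across those slices. Patching this by also cutting off in $t$ — say $f(s,t)=\psi(t)\chi(s)$ with $\psi$ compactly supported — cannot give $\Gamma(f)\leqslant 1$ \emph{and} $f(y)-f(x)=T_y$ simultaneously: a scalar function of $t$ that rises by $T_y$ over $[0,T_y]$ and returns to zero outside a slightly larger interval necessarily has $|\psi'|>1$ somewhere, so $\nabla_A f=\psi'(t)\chi(s)$ would exceed $1$. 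The tube construction therefore needs the flow line to remain uncut (no $t$-cutoff), which undercuts your extension-by-zero strategy and brings back precisely the global-existence question the paper sidesteps.

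For the $d_{arc}=\infty$ case the paper uses the explicit step function $f_N$ equal to $0$ on $V_A(x)$ and $N$ elsewhere, for which (formally) $\Gamma(f_N)=0$ because $f_N$ is constant along integral curves; letting $N\to\infty$ shows $d=\infty$, and conversely the upper-bound computation shows $d<\infty$ when $d_{arc}<\infty$. Your sketch (``progressively longer flow intervals'' and ``auxiliary test functions detecting angular deviation'') does not specify what these test functions are or why they satisfy the constraint $\Gamma\leqslant 1$, so this part of the argument is incomplete as written. In short: the upper bound is fine and matches the paper; the lower bound takes a different and more constructive route but has a concrete flaw in the cutoff; and the infinity case needs the paper's explicit indicator-type test function (or an equivalent) rather than the vague alternatives you suggest.
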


\begin{proof} Fix $x \in H$. We will consider the case when  $d_{arc}\left( x, y \right)=\infty$ or $d\left( x, y \right)=\infty$ later, so for now we assume that both distances are finite.

Let $\gamma$ be any path connecting $x$ and $y$ with $\gamma\left( s \right)=y$. Note that since $d_{arc}\left( x, y \right)<\infty$, we have $y \in V_{A}\left( x \right)$. Then
\begin{equation}
 d\left( x, y \right)=\sup_{\{ f: \Gamma \left( f \right)\leqslant 1 \}} \left( f\left( y \right)-f\left( x \right) \right)
= \sup_{\{ f: \Gamma \left( f \right)\leqslant 1 \}} \int\limits_{0}^{s} \langle \nabla f|_{\gamma\left( t \right)}, \dot{\gamma}\left( t \right) \rangle \,dt \label{e.6.17}
\end{equation}

Choosing $f_{A}$ such that $\nabla f_{A}=\frac{A}{\vert A \vert^{2}}$, then
\[
\Gamma \left( f_{A} \right)=\vert \nabla_{A} f_{A} \vert^{2}=\langle \nabla f_{A}, A \rangle^{2}=1,
\]
and therefore for the function $f_{A}$
\[
d\left( x, y \right)\geqslant f_{A}\left( y \right)-f_{A}\left( x \right)=\int\limits_{0}^{T_{y}} \langle \nabla f_{A}, \dot{\gamma_{A}}\left( t \right) \rangle \,dt=\int\limits_{0}^{T_{y}}1 \,dt=T_{y}=d_{arc}\left( x, y \right).
\]
Again, by \eqref{e.6.17},
\begin{align}
 d\left( x, y \right)&=\sup_{\{ f: \Gamma \left( f \right)\leqslant 1 \}} \int\limits_{0}^{T_{y}} \langle \nabla f|_{\gamma_{A}\left( t \right)}, \dot{\gamma_{A}}\left( t \right) \rangle \,dt\notag
\\
&= \sup_{\{ f: \Gamma \left( f \right)\leqslant 1 \}} \int\limits_{0}^{T_{y}} \langle \nabla f|_{\gamma_{A}\left( t \right)}, \gamma_{A}\left(\gamma\left( t \right)\right) \rangle \,dt \label{e.6.18}
\\
&= \sup_{\{ f: \Gamma \left( f \right)\leqslant 1 \}} \int\limits_{0}^{T_{y}} \nabla_{A} f|_{\gamma_{A}\left( t \right)} \,dt \leqslant \int\limits_{0}^{T_{y}} 1 \,dt=d_{arc}\left( x, y \right). \notag
\end{align}
Finally we want to show that both distances are infinite for the same $y$.  Define a function
\[
f_{N}\left( z \right):=\left\{
                                     \begin{array}{ll}
0, & z \in V_{A}\left( x \right); \\
& \\
N, & z \notin V_{A}\left( x \right)
                                     \end{array}
                                   \right.
\]
for some $N$. Note that $\Gamma\left( f_{N} \right)=0$. Suppose $d_{arc}\left( x, y \right)=\infty$, so $f_{N}\left( y \right)=N$. Then
\[
d\left( x, y \right)\geqslant f_{N}\left( y \right)-f_{N}\left( x \right)=N.
\]
By taking $N\to \infty$ we see that $d\left( x, y \right)=+\infty$.

Next suppose that $d\left( x, y \right)=\infty$. Then there are functions $f_{N}$ with $\Gamma\left( f_{N} \right)\leqslant 1$ such that
$ f_{N}\left( y \right)-f_{N}\left( x \right)\rightarrow +\infty$
as $N\to \infty$.
Similarly to \eqref{e.6.18} (if we assume that $d_{arc}\left( x, y \right)<\infty$ to find $\gamma_{A}$) we see that
\[
+\infty=\lim_{N\to \infty}f_{N}\left( y \right)-f_{N}\left( x \right)\leqslant T_{y}=d_{arc}\left( x, y \right),
\]
and therefore $d_{arc}\left( x, y \right)=+\infty$.
\end{proof}

\subsection{The parabolic Harnack inequality}

\begin{thm}\label{t.4.13} Suppose $u$ is a positive solution to the heat equation
\[
\partial_{t}u=Lu, \qquad  u(0,\cdot)=f.
\]
Then for any $0\leqslant t_{1} < t_{2} \leqslant 1$ and $x, y$ in the same admissible component, say, $V_{A}\left( x \right)$, we have
\[
\log u\left( t_{1}, x\right)-\log u\left( t_{2}, y \right)\leqslant \frac{T_{x}^{2}}{4\left( t_{2}-t_{1}\right)}+\frac{1}{2}\log{\frac{t_{2}}{t_{1}}},
\]
where $T_{x}$ is defined in Definition \ref{d.6.12}.
\end{thm}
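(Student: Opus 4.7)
The plan is to follow the classical Li--Yau route to a parabolic Harnack inequality, using Corollary \ref{cor.8.8} as the key gradient estimate and the path characterization of $d_{arc}$ established in Section \ref{ss.4.4}. For clarity I will carry out the $m=1$ case, writing $A$ for $A^1$ so that $\Gamma(f) = (\nabla_A f)^2$; the argument for general $m$ reduces to this by following one integral curve $\gamma_{A}$ at a time.

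First I would set up the interpolating space-time curve. Since $x$ and $y$ lie in the same admissible component, let $\gamma:[0,T_y]\to H$ be the integral curve with $\dot\gamma(s)=A(\gamma(s))$, $\gamma(0)=x$, $\gamma(T_y)=y$. For any smooth, strictly increasing reparametrization $\alpha:[t_1,t_2]\to[0,T_y]$ with $\alpha(t_1)=0$ and $\alpha(t_2)=T_y$, put $\varphi(t):=\gamma(\alpha(t))$ and
\[
\Phi(t):=\log u(t,\varphi(t)).
\]
By the chain rule (using $u>0$ and $H$-differentiability from the framework in Section \ref{S:hor}),
\[
\Phi'(t)=\partial_t(\log u)+\alpha'(t)\,\nabla_A(\log u)\big|_{\varphi(t)}.
\]

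Next I would apply the Li--Yau estimate. By Corollary \ref{cor.8.8},
\[
\partial_t(\log u) > -\tfrac{1}{2t}+\Gamma(\log u)=-\tfrac{1}{2t}+(\nabla_A\log u)^2
\]
at every point, in particular at $\varphi(t)$. Setting $a:=\nabla_A(\log u)\big|_{\varphi(t)}$, I obtain
\[
\Phi'(t)\geqslant a^2+\alpha'(t)\,a-\tfrac{1}{2t}\geqslant -\tfrac{\alpha'(t)^2}{4}-\tfrac{1}{2t},
\]
where the second inequality is completion of the square (the bound $a^2+\alpha'a\geqslant -\alpha'^2/4$).

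Integrating this pointwise inequality from $t_1$ to $t_2$ gives
\[
\log u(t_2,y)-\log u(t_1,x)\geqslant -\tfrac{1}{2}\log\tfrac{t_2}{t_1}-\tfrac{1}{4}\int_{t_1}^{t_2}\alpha'(t)^2\,dt.
\]
I would then choose $\alpha$ to minimize $\int_{t_1}^{t_2}\alpha'(t)^2\,dt$ under the endpoint constraints; by Cauchy--Schwarz the minimum is $T_y^2/(t_2-t_1)$, attained by the linear $\alpha(t)=T_y(t-t_1)/(t_2-t_1)$. Rearranging yields
\[
\log u(t_1,x)-\log u(t_2,y)\leqslant \frac{T_y^2}{4(t_2-t_1)}+\frac12\log\frac{t_2}{t_1},
\]
which is the asserted inequality (the symbol $T_x$ in the statement denoting the $d_{arc}$-length from $x$ to $y$). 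The main technical obstacle is justifying the pointwise application of Corollary \ref{cor.8.8} along $\varphi(t)$ and the chain-rule differentiation of $\Phi$ for a positive solution of the infinite-dimensional heat equation; this is handled by first assuming $u=P_{t}f$ with $f\in C_b^{2}(H)$ strictly positive (where Theorem \ref{t.8.2}(3) ensures the requisite regularity), and then extending to general positive solutions by approximating $f$ from below by $f\wedge N+\varepsilon$ and passing to the limit $\varepsilon\downarrow0$, $N\uparrow\infty$.
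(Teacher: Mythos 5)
Your proposal is correct and follows essentially the same route as the paper's proof: integrate $\tfrac{d}{dt}\log u$ along a space-time curve built from an integral curve of $A$, apply Corollary \ref{cor.8.8} pointwise, complete the square in the gradient term, and compute the resulting time integrals. The only cosmetic differences are that you parametrize by $t\in[t_1,t_2]$ rather than by arclength $s\in[0,T_x]$, and you present the linear reparametrization as the minimizer of $\int\alpha'^2$ (via Cauchy--Schwarz) whereas the paper simply writes it down; these amount to the same computation.
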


\begin{proof} The proof is standard.
Let $u\left( t, x \right):= P_{t}f \left( x \right)$ for a positive function $f \in C_{b}^{2}\left( H \right)$. Then by Theorem \ref{t.8.2}, $u$ is the solution to the heat equation
\[
\partial_{t}g=Lg, \qquad g(0,\cdot)=f.
\]

Denote $g\left( t, x \right):=\log u\left( t, x \right)$. Let $t_{2}>t_{1}\geqslant 0$, $x, y \in H$. Since $y \in V_{A}\left( x \right)$, we can find a smooth path $\gamma_{A}: [0, T_{y}] \to H^m$ such that $\gamma\left( 0 \right)=y$, $\gamma\left( T_{x} \right)=x$, and $\dot{\gamma}\left( t \right)=A\left( \gamma\left( t \right) \right)$. Define $\sigma: [0, T_{x}] \to [ t_{1}, t_{2} ] \times H^m$ by $\sigma\left( s \right):= \left(t_{2}-\frac{t_{2}-t_{1}}{T_{x}}s, \gamma\left( s \right)\right)$. Note that $\sigma\left( 0 \right)= \left( t_{2}, y \right)$ and $\sigma\left( T_{x} \right)= \left( t_{1}, x \right)$. Then
\begin{align*}
 g\left( t_{1}, x\right)&-g\left( t_{2}, y \right)=g\left(\sigma\left( 0 \right)\right)-g\left( \sigma\left( T_{x} \right)\right)
\\
&= \int_{0}^{T_{x}} \frac{d}{ds}g\left( \sigma\left( s \right)\right)\,ds
\\
&=\int_{0}^{T_{x}} \left( \langle \nabla g, \dot{\gamma_{A}} \rangle- \left( \frac{t_{2}-t_{1}}{T_{x}}\right)\partial_{t}g\left( \sigma\left( s \right)\right)\right) \,ds
\\
&\leqslant \int_{0}^{T_{x}}\nabla_{A} f|_{\gamma_{A}\left( s \right)} \,ds -\int_{0}^{T_{x}}\frac{t_{2}-t_{1}}{T_{x}}\Gamma\left( g \right)\\
&\qquad +\frac{1}{2}\int_{0}^{T_{x}} \frac{\left( t_{2}-t_{1}\right)}{T_{x}t_{2}-\left( t_{2}-t_{1}\right)s}\,ds
\end{align*}
by Corollary \ref{cor.8.8}. Note that $\Gamma\left( g \right)=\vert \nabla_{A} g\vert^{2}$, so
\begin{align*}
& \nabla_{A} f-\frac{t_{2}-t_{1}}{T_{x}}\Gamma\left( g \right)\leqslant \frac{T_{x}}{4\left( t_{2}-t_{1}\right)},
\end{align*}
 where we used the elementary estimate
 $ax-bx^{2}\leqslant {a^{2}}/{4b}$ for  $b>0$
with $x=\nabla_{A} g$. Finally, we have
\[
g\left( t_{1}, x\right)-g\left( t_{2}, y \right)\leqslant \frac{T_{x}^{2}}{4\left( t_{2}-t_{1}\right)}+\frac{1}{2}\log{\frac{t_{2}}{t_{1}}}.
\]

\end{proof}

\bibliographystyle{amsplain}

\begin{thebibliography}{10}

\bibitem{Bakry2006Tata}
Dominique Bakry, \emph{Functional inequalities for {M}arkov semigroups},
  Probability measures on groups: recent directions and trends, Tata Inst.
  Fund. Res., Mumbai, 2006, pp.~91--147. 

\bibitem{BakryLedoux2006}
Dominique Bakry and Michel Ledoux, \emph{A logarithmic {S}obolev form of the
  {L}i-{Y}au parabolic inequality}, Rev. Mat. Iberoam. \textbf{22} (2006),
  no.~2, 683--702. 

\bibitem{BakryQian1999}
Dominique Bakry and Zhongmin~M. Qian, \emph{Harnack inequalities on a manifold
  with positive or negative {R}icci curvature}, Rev. Mat. Iberoamericana
  \textbf{15} (1999), no.~1, 143--179. 

\bibitem{BarlowBass1999a}
Martin~T. Barlow and Richard~F. Bass, \emph{Brownian motion and harmonic
  analysis on {S}ierpinski carpets}, Canad. J. Math. \textbf{51} (1999), no.~4,
  673--744. 

\bibitem{BassBook1995}
Richard~F. Bass, \emph{Probabilistic techniques in analysis}, Probability and
  its Applications (New York), Springer-Verlag, New York, 1995. 
  
\bibitem{BassBook1998}
\bysame, \emph{Diffusions and elliptic operators}, Probability and its
  Applications (New York), Springer-Verlag, New York, 1998. 

\bibitem{BendikovSaloffCoste2000}
A.~Bendikov and L.~Saloff-Coste, \emph{On- and off-diagonal heat kernel
  behaviors on certain infinite dimensional local {D}irichlet spaces}, Amer. J.
  Math. \textbf{122} (2000), no.~6, 1205--1263. 

\bibitem{Berg1976}
Christian Berg, \emph{Potential theory on the infinite dimensional torus},
  Invent. Math. \textbf{32} (1976), no.~1, 49--100. 

\bibitem{BiroliMosco1991}
Marco Biroli and Umberto Mosco, \emph{Formes de {D}irichlet et estimations
  structurelles dans les milieux discontinus}, C. R. Acad. Sci. Paris S\'er. I
  Math. \textbf{313} (1991), no.~9, 593--598. 

\bibitem{BogachevBook}
Vladimir~I. Bogachev, \emph{Gaussian measures}, Mathematical Surveys and
  Monographs, vol.~62, American Mathematical Society, Providence, RI, 1998.
  

\bibitem{Carmona1980}
Ren{\'e} Carmona, \emph{Infinite-dimensional {N}ewtonian potentials},
  Probability theory on vector spaces, {II} ({P}roc. {S}econd {I}nternat.
  {C}onf., {B}\l a\.zejewko, 1979), Lecture Notes in Math., vol. 828, Springer,
  Berlin, 1980, pp.~30--43. 

\bibitem{DaPratoZabczykBook1}
Giuseppe Da~Prato and Jerzy Zabczyk, \emph{Stochastic equations in infinite
  dimensions}, Encyclopedia of Mathematics and its Applications, vol.~44,
  Cambridge University Press, Cambridge, 1992. 

\bibitem{DaPratoZabczykBook3}
\bysame, \emph{Second order partial differential equations in {H}ilbert
  spaces}, London Mathematical Society Lecture Note Series, vol. 293, Cambridge
  University Press, Cambridge, 2002. 

\bibitem{DriverGordina2008}
Bruce~K. Driver and Maria Gordina, \emph{Heat kernel analysis on
  infinite-dimensional {H}eisenberg groups}, J. Funct. Anal. \textbf{255}
  (2008), no.~9, 2395--2461. 

\bibitem{Elson1974}
Constance~M. Elson, \emph{An extension of {W}eyl's lemma to infinite
  dimensions}, Trans. Amer. Math. Soc. \textbf{194} (1974), 301--324.
  

\bibitem{Goodman1972a}
Victor Goodman, \emph{Harmonic functions on {H}ilbert space}, J. Functional
  Analysis \textbf{10} (1972), 451--470. 

\bibitem{Goodman1973}
\bysame, \emph{A {L}iouville theorem for abstract {W}iener spaces}, Amer. J.
  Math. \textbf{95} (1973), 215--220. 

\bibitem{GraversenPeskir2000}
S.~E. Graversen and G.~Peskir, \emph{Maximal inequalities for the
  {O}rnstein-{U}hlenbeck process}, Proc. Amer. Math. Soc. \textbf{128} (2000),
  no.~10, 3035--3041. 

\bibitem{Gross1967a}
Leonard Gross, \emph{Potential theory on {H}ilbert space}, J. Functional
  Analysis \textbf{1} (1967), 123--181. 

\bibitem{Kassmann2007a}
Moritz Kassmann, \emph{Harnack inequalities: an introduction}, Bound. Value
  Probl. \textbf{Art. ID 81415} (2007), 21pp. 

\bibitem{KrylovSafonov1980}
N.~V. Krylov and M.~V. Safonov, \emph{A property of the solutions of parabolic
  equations with measurable coefficients}, Izv. Akad. Nauk SSSR Ser. Mat.
  \textbf{44} (1980), no.~1, 161--175, 239. 

\bibitem{KuoBook1975}
Hui~Hsiung Kuo, \emph{Gaussian measures in {B}anach spaces}, Springer-Verlag,
  Berlin, 1975, Lecture Notes in Mathematics, Vol. 463. 

\bibitem{Ledoux2000a}
Michel Ledoux, \emph{The geometry of {M}arkov diffusion generators}, Ann. Fac.
  Sci. Toulouse Math. (6) \textbf{9} (2000), no.~2, 305--366, Probability
  theory. 

\bibitem{Moser1961a}
J{\"u}rgen Moser, \emph{On {H}arnack's theorem for elliptic differential
  equations}, Comm. Pure Appl. Math. \textbf{14} (1961), 577--591. 

\bibitem{Piech1972a}
M.~Ann Piech, \emph{Differential equations on abstract {W}iener space}, Pacific
  J. Math. \textbf{43} (1972), 465--473. 

\bibitem{Piech1972b}
\bysame, \emph{Regularity of the {G}reen's operator on abstract {W}iener
  space}, J. Differential Equations \textbf{12} (1972), 353--360. 

\bibitem{SaloffCoste1995a}
L.~Saloff-Coste, \emph{Parabolic {H}arnack inequality for divergence-form
  second-order differential operators}, Potential Anal. \textbf{4} (1995),
  no.~4, 429--467, Potential theory and degenerate partial differential
  operators (Parma). 

\bibitem{ShigekawaBook}
Ichiro Shigekawa, \emph{Stochastic analysis}, Translations of Mathematical
  Monographs, vol. 224, American Mathematical Society, Providence, RI, 2004,
  Translated from the 1998 Japanese original by the author, Iwanami Series in
  Modern Mathematics. 

\end{thebibliography}
\providecommand{\bysame}{\leavevmode\hbox to3em{\hrulefill}\thinspace}
\providecommand{\MR}{\relax\ifhmode\unskip\space\fi MR }
\providecommand{\MRhref}[2]{%
  \href{http://www.ams.org/mathscinet-getitem?mr=#1}{#2}
}
\providecommand{\href}[2]{#2}

\end{document}